\documentclass{amsproc}

\usepackage{amsmath}
\usepackage{xcolor}
\usepackage{amssymb}
\usepackage{url}
\usepackage{supertabular} 
\usepackage{mathabx}
\usepackage{multirow}
\usepackage{graphics}
\usepackage{float}
\usepackage{graphicx}
\usepackage[utf8]{inputenc}
\usepackage{makecell}
\usepackage{bigstrut}
\usepackage{longtable}
\usepackage{afterpage} 
\allowdisplaybreaks

\newcommand{\ud}{\mathrm{d}} 
\newcommand{\uD}{\mathrm{D}} 

\renewcommand{\phi}{\varphi}

\newcommand{\splitatcommas}[1]{%
  \begingroup
  \begingroup\lccode`~=`, \lowercase{\endgroup
    \edef~{\mathchar\the\mathcode`, \penalty0 \noexpand\hspace{0pt plus 1em}}%
  }\mathcode`,="8000 #1%
  \endgroup
}

\newtheorem{theorem}{Theorem}[section]
\newtheorem{lemma}[theorem]{Lemma}

\theoremstyle{definition}

\newtheorem{example}[theorem]{Example}

\theoremstyle{remark}

\theoremstyle{remark}

\theoremstyle{conjecture}
\newtheorem{conjecture}[theorem]{Conjecture}

\numberwithin{equation}{section}

\newmuskip\pFqskip
\mathchardef\pFcomma=\mathcode`, 

\newcommand*\pFq[5]{%
  \begingroup
  \begingroup\lccode`~=`,
    \lowercase{\endgroup\def~}{\pFcomma\mkern\pFqskip}%
  \mathcode`,=\string"8000
  {}_{#1}F_{#2}\biggl(\genfrac..{0pt}{}{#3}{#4};#5\biggr)%
  \endgroup
}

\title
[Quadratic irrational series for $1/\pi$]{Quadratic irrational analogues of Ramanujan's series for $1/\pi$}
\date{\today}
\author{John M. Campbell}
\author{Shaun Cooper}
\author{Dongxi Ye}
\address{
Department of Mathematics and Statistics, Dalhousie University, 
 6299 South St, Halifax, Canada 
 E-mail: jh241966@dal.ca
}
\address{
School of Mathematical and Computational Sciences, Massey University,
Private Bag 102904, North Shore Mail Centre, Auckland, New Zealand
E-mail: s.cooper@massey.ac.nz
}

\address{
School of AI and Liberal Arts, Beijing Normal-Hong Kong Baptist University, Zhuhai 519082, Guangdong,
People's Republic of China
E-mail: dongxiye@bnbu.edu.cn
}

\subjclass[2020]{11F03, 11F11}

\keywords{
Ap{\'e}ry numbers, genus zero, Hauptmodul, modular form, Ramanujan-type series, Ramanujan--Sato series}

\thanks{Dongxi Ye was supported by the Guangdong Basic and Applied Basic Research Foundation (Grant No. 2024A1515030222).}

\usepackage{hyperref}

\hypersetup{
 colorlinks = true,
 urlcolor = blue,
 linkcolor = black,
 citecolor = blue
}

\begin{document}

\begin{abstract}
About 40 years ago Jonathan and Peter Borwein discovered the series identity  
$$
\sum_{n=0}^\infty \frac{(-1)^n(6n)!}{(3n)!(n!)^3}  \frac{(A+nB)}{C^{n+1/2}} = \frac{1}{12\pi}
$$
where 
\begin{align*}
    A&=1657145277365+212175710912\sqrt{61}, \\
    B&=107578229802750+13773980892672\sqrt{61}, \\
    C&=\left(5280(236674+30303\sqrt{61})\right)^3
\end{align*}
which adds roughly 25 digits of accuracy per term. They noted that if each of the quadratic
irrationals
$A$, $B$ and $C$ is replaced by their conjugates, that is, each number $a+b\sqrt{61}$
is changed to $a-b\sqrt{61}$, then the
resulting series also converges to a rational multiple of $1/\pi$.
They gave several other examples of quadratic irrational series for $1/\pi$, and noted that
the conjugate series converges to another rational multiple of $1/\pi$ or in some
cases the conjugate series diverges.

The purpose of this work
is to provide an explanation and classification of such series.
Our classification includes Ramanujan's 17 original series, as well as series of the Borweins,
Chudnovskys, Sato and others.
We extend the classification to genus-zero subgroups $\Gamma_0(\ell)+$, that is, for each
$\splitatcommas{\ell \in \big\{1,2,3,\ldots,36,38,39,41,42,44,45,46,47,49,50,51,54,55,56,59,60,62,66,69,70,
71,78,87,92,94,95,105,110,119\big\}}$ we calculate the Hauptmoduls, associated
weight two modular forms, and the corresponding rational and real quadratic irrational series for $1/\pi$.

The classification reveals many interrelations among the different series. For example,
we show that the Borweins' series above, and its conjugate, are equivalent
by hypergeometric transformation formulas
to the level~7 rational series
$$
\sum_{n=0}^\infty \left\{\sum_{j=0}^n {n \choose j}^2{2j \choose n} {n+j \choose j}\right\} (11895n+1286) \frac{(-1)^n}{22^{3n+3}}
= \frac{1}{\pi\sqrt{7}}.
$$

\end{abstract}

\maketitle

\section{Introduction}
S.~Ramanujan~\cite{ramanujan_pi} introduced four types of series for $1/\pi$ and gave seventeen
examples. The most famous is
\begin{equation}
\label{E:ramanujan-gosper}
\sum_{n=0}^\infty {4n \choose 2n}{2n \choose n}^2\frac{(1103+26390n)}{99^{4n+2}}
=\frac{1}{2\sqrt{2}}\times \frac{1}{\pi}
\end{equation}
which adds approximately 8 decimal places of accuracy per term. The series converges
sufficiently fast that it was used in a world record setting calculation of $\pi$
by R.~W.~Gosper
in~1985, e.g., 
see~\cite[p. 341]{borwein-book}, \cite{ramanujan_and_pi}, \cite{bbe3}.
Since all the terms in the series
are rational numbers, we say~\eqref{E:ramanujan-gosper} is a rational series 
for~$1/\pi$. Technically, it is a ``rational series for $\frac{1}{2\sqrt{2}}\times \frac{1}{\pi}$'', but for
naming purposes the term ``rational'' will refer to the terms in the series.
As we will see in the classification, most ``rational series for $1/\pi$'' converge to a
quadratic irrational times $1/\pi$. Moreover, the classification shows they
are actually series for $\frac{1}{2\pi}$, e.g., see~\eqref{E:CCL}, but we do not propose renaming the series to reflect this.

Of Ramanujan's 17 examples, 16 are rational series. The other series~\cite[(30)]{ramanujan_pi} involves
quadratic irrationals and can be written as
\begin{equation}
\label{E:ramirrational}
\sum_{n=0}^\infty {2n \choose n}^3 \left(n+\frac{31}{270+48\sqrt{5}}\right) 
\frac{1}{2^{4n}(1+\sqrt{5})^{8n}}
= \frac{16}{15+21\sqrt{5}} \times  \frac{1}{\pi}.
\end{equation}
Further examples of quadratic irrational series were given by the
Borweins~\cite{borweinrr}. They also computed series involving cubic irrational numbers corresponding
to class number three fields in~\cite{borweinclass3}.

The Borweins' example in the abstract may be rewritten
\begin{equation}
\label{E:deg427}
\sum_{n=0}^\infty {6n \choose 3n}{3n \choose n}{2n \choose n}
\left(n+\lambda\right)X^n 
= \frac{1}{\pi}\times \frac{1}{\sqrt{427}} \times \frac{1}{\sqrt{1-1728X}}
\end{equation}
where
$$
X=\frac{-1}{2^{15}\cdot3^3\cdot5^3\cdot11^3\cdot(236674 + 30303\sqrt{61})^3}
$$
and
$$
\lambda=\frac{291728347791 - 28276402880\sqrt{61}}{2\cdot3^2\cdot7\cdot19\cdot23\cdot47\cdot61\cdot103\cdot283}.
$$
The advantages of this form over the formula in the abstract are
as follows. First, only
two parameters $X$ and $\lambda$ need to be specified, whereas the Borweins' 
formula requires values for three parameters $A$, $B$ and $C$.
Second, the conjugate series obtained by replacing $X$ and $\lambda$
by their conjugates
$$
\overline{X}=\frac{-1}{2^{15}\cdot3^3\cdot5^3\cdot11^3\cdot(236674 - 30303\sqrt{61})^3}
$$
and
$$
\overline{\lambda}=\frac{291728347791 + 28276402880\sqrt{61}}{2\cdot3^2\cdot7\cdot19\cdot23\cdot47\cdot61\cdot103\cdot283}
$$
is given by
\begin{equation}
\label{E:deg427c}
\sum_{n=0}^\infty {6n \choose 3n}{3n \choose n}{2n \choose n}
\left(n+\overline{\lambda}\right)\overline{X}^n 
= \frac{1}{\pi}\times \frac{7}{\sqrt{427}} \times \frac{1}{\sqrt{1-1728\overline{X}}}.
\end{equation}
The conjugate series given by the Borweins has $A$, $B$ and $C$ replaced with
$-\overline{A}$, $-\overline{B}$ and $\overline{C}$, respectively, and the negative
signs that appear with conjugate values of $A$ and $B$ suggest the notation
is not optimal.\footnote{The conjugate series~\cite[p. 360]{borweinrr} given by the Borweins 
contains a typographical error that is repeated in~\cite[p. 282]{borweinclass3}:
the factor $7\times 12$ that multiplies the conjugate series should be $12/7$.}
Third, the form of the series~\eqref{E:deg427} is an instance of a theorem 
of H. H. Chan, S. H. Chan and Z.-G. Liu~\cite{domb} that encapsulates all of Ramanujan's examples and
allows classification of further series that were initiated by T.~Sato\cite{sato}.

The goal of this work is to provide a complete classification that includes series such
as~\eqref{E:ramanujan-gosper}--\eqref{E:deg427c}. Specifically, for each $\Gamma_0(\ell)+$
for which the modular curve has genus 0 we exhibit a Hauptmodul~$X$ and the corresponding
weight two modular form~$Z$ and determine the rational and quadratic series for $1/\pi$.

The work is organized as follows.
The underlying theory, which consists of four theorems, is outlined in Section~\ref{S:Theory}.
The first three theorems (Theorems~\ref{T:1}–\ref{T:4}) involve a Hauptmodul $X$, a
corresponding weight two modular form $Z$, and two polynomials $G$ and $H$.
The special cases of Theorems~\ref{T:1}–\ref{T:4} for levels $1$ and $30$ are discussed in Section~\ref{S:Hauptmoduls}, and the data for each genus-zero group $\Gamma_0(\ell)+$ are summarized in Tables~\ref{T:00}, \ref{T:Z}, and \ref{T:GH}.
In Section~\ref{S:pi}, several well-known series for $1/\pi$ are given to illustrate the
fourth theorem (Theorem~\ref{T:3}), and the data in Tables~\ref{T:1.1}–\ref{T:105.1} are discussed.
A brief historical summary is provided in Section~\ref{S:historical}.
Convergence is analyzed in Section~\ref{S:Con}.
Some general theorems that relate a quadratic irrational series to its conjugate are
given in Sections~\ref{S:level1} and \ref{S:level2}. This includes precise details of how the
Borweins' conjugate series are equivalent to the rational level~7 series in the abstract.
Further general results are given in Sections~\ref{S:general} and \ref{S:remarks}.
Section~\ref{S:tables} contains tables:
\begin{itemize}
\item
Table~\ref{T:00} lists the Hauptmoduls $X$;
\item
Table~\ref{T:Z} lists the corresponding weight two modular forms $Z$;
\item
Table~\ref{T:GH} lists the associated polynomials $G$ and $H$;
\item
Tables~\ref{T:1.1}–\ref{T:105.1} contain data for series for $1/\pi$;
\item
Table~\ref{T:00X} lists cases in which the coefficients $T(n)$ have known formulas
in terms of binomial coefficients.
\end{itemize}

We end this section with some notation that will be used throughout.
Let $\tau$ be a complex variable with $\text{Im}(\tau)>0$. Let $q=\exp(2\pi i\tau)$ so
that $|q|<1$. Dedekind's eta function is
$$
\eta(\tau) = q^{1/24}\prod_{j=1}^\infty (1-q^j),
$$
and Ramanujan's Eisenstein series
$P$, $Q$ and $R$ are defined by
$$
P(q) = 1-24\sum_{j=1}^\infty \frac{jq^j}{1-q^j}, \quad
Q(q) = 1+240\sum_{j=1}^\infty \frac{j^3q^j}{1-q^j}, \quad
R(q) = 1-504\sum_{j=1}^\infty \frac{j^5q^j}{1-q^j}.
$$
They satisfy the properties, e.g., see~\cite[p. 92]{spirit}, 
\cite{chanrde},
\cite[pp. 29--31]{cooperbook},
\cite{ramanujan_arithmetical}:
\begin{equation}
\label{E:rde1}
q\frac{\ud}{\ud q} \log \eta^{24}(q) = P(q), \qquad Q^3(q)-R^2(q) = 1728 \eta^{24}(\tau)
\end{equation}
and
\begin{equation}
\label{E:rde2}
q\frac{\ud P}{\ud q}=\frac{P^2-Q}{12}, \quad
q\frac{\ud Q}{\ud q}=\frac{PQ-R}{3}, \quad
q\frac{\ud R}{\ud q}=\frac{PR-Q^2}{2}.
\end{equation}
For any positive integer $m$, let $\eta_m$ and $P_m$ be defined by
$$
\eta_m = \eta(m\tau) = q^{m/24} \prod_{j=1}^\infty (1-q^{mj})
$$
and
$$
P_m = P(q^m) = 1-24\sum_{j=1}^\infty \frac{jq^{mj}}{1-q^{mj}}.
$$

\section{Theory}
\label{S:Theory}
In this section, we outline the basic theory for
Ramanujan-type series for~$1/\pi$ such as~\eqref{E:ramanujan-gosper}--\eqref{E:deg427c}.
It will always be assumed that $X=X(q)=q+O(q^{2})$ is a Hauptmodul of a Fuchsian group~$\Gamma$ that is a normal extension of the Fricke extension of some $\Gamma_{0}(\ell)$, i.e., $\Gamma \trianglerighteq \langle \Gamma_{0}(\ell),\begin{pmatrix}0&-1\\\ell&0\end{pmatrix}\rangle$, so that it satisfies
\begin{equation}
\label{E:X}
X(e^{-2\pi\sqrt{t/\ell}})=X(e^{-2\pi/\sqrt{t\ell}}).
\end{equation}
Examples of Hauptmoduls are given in Table~\ref{T:00}.

The first result shows that a Hauptmodul $X$ is naturally associated with
a weight two modular form~$Z$.
\begin{theorem}
\label{T:1}
Suppose $X=X(q)$ is as above.
There exist polynomials 
\begin{equation}
\label{E:GHdefn}
G(x) = 1+\sum_{j=1}^{k_1} g_jx^j \quad\text{and}\quad H(x) = \sum_{j=1}^{k_2} h_j x^j
\end{equation}
such that the weight two modular form $Z$ defined by
\begin{equation}
\label{E:Zdefn}
Z=\frac{1}{\sqrt{G(X)}} \, q\frac{\ud}{\ud q} \log X
\end{equation}
satisfies
\begin{equation}
\label{E:Zde}
\frac{1}{Z_q}\left(\frac{Z_{q}^2}{Z^3}\right)_{\!q} = H(X),
\end{equation}
where $F_q:=q\frac{\ud F}{\ud q}$.
That is, $\frac{1}{Z_q}\left(\frac{Z_{q}^2}{Z^3}\right)_{\!q}$ is a polynomial in~$X$, and the constant
term of the polynomial is zero. 
\end{theorem}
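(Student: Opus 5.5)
The plan is to choose $G$ so that $Z$ is a genuine weight two form with controlled zeros. Then I would show that the expression $\frac{1}{Z_q}\bigl(\frac{Z_q^2}{Z^3}\bigr)_q$ is a modular function for the group $\Gamma$ containing $\Gamma_0(\ell)$ and the Fricke involution, with poles only where $X=\infty$; that forces it to be a polynomial in the Hauptmodul $X$.

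\emph{Step 1: constructing $G$.} Since $X$ has weight $0$, $X_q=q\frac{\ud X}{\ud q}$ is a meromorphic weight two form for $\Gamma$, possibly with a character. Hence $X_q/X$ is also a meromorphic weight two form. By the valence formula, its zeros in $\mathbb H$ and at the cusps are forced. They occur at the elliptic points of $\Gamma$, where $\ud X$ vanishes to order $e-1$ for an elliptic point of order $e$. They also occur at the cusps other than $\infty$, and at points where $X$ has a pole (not necessarily at a cusp). Its only other pole is the simple pole of $1/X$ coming from $X=0$, which is the cusp $\infty$, where $X_q/X\to 1$.

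I would take $G(x)$ to be the product of the factors $(1-x/X(\tau_j))$ over these special points, with multiplicities chosen so that $(X_q/X)^2/G(X)$ has no zeros or poles in $\mathbb H$ and is finite and nonzero at every cusp. A point where $X=\infty$ contributes through the degree of $G$. The normalisation $G(0)=1$ is automatic. Then $Z=\frac{1}{\sqrt{G(X)}}X_q/X$ is holomorphic and nonvanishing on $\mathbb H$, with $Z=1+O(q)$. The square root is well defined on $\mathbb H$ because $Z^2$ has even-order divisor there. This is the step I expect to be the main obstacle. It requires a careful case-by-case knowledge of the elliptic points and cusps of each $\Gamma_0(\ell)+$, and a check that the orders really pair up so that $\sqrt{G}$ cancels them exactly. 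This is where the data in Tables~\ref{T:00} and \ref{T:GH} would be used.

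\emph{Step 2: modularity of the expression.} Put $u=Z_q/Z^2$. A direct computation gives
$$\frac{1}{Z_q}\Bigl(\frac{Z_q^2}{Z^3}\Bigr)_{\!q}=\frac{(Zu^2)_q}{Z^2u}=u^2+\frac{2u_q}{Z}.$$
Write $Z_q=\frac{1}{6}PZ+M$, where $M$ is a (meromorphic) weight four form, and use \eqref{E:rde2}, $P_q=(P^2-Q)/12$. Then $u=\frac{P}{6Z}+\frac{M}{Z^2}$. Expanding $u^2+2u_q/Z$, the $P^2$ terms cancel: $\frac{P^2}{36Z^2}$ from $u^2$ against $\frac{2}{Z}\bigl(\frac{P^2}{72Z}-\frac{P Z_q}{6Z^2}+\dots\bigr)$ after substituting $Z_q$. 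The terms linear in $P$ must cancel in the same way. I would verify this bookkeeping explicitly. What remains is a weight zero combination of $Q$, $M$, $M_q-\frac{1}{3}PM$ and powers of $Z$, so it is a modular function $W$ for $\Gamma$.

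\emph{Step 3: $W$ is a polynomial in $X$ with zero constant term.} Since $Z$ has no zeros in $\mathbb H$, $W$ is holomorphic on $\mathbb H$. At the cusps, $Z$ is finite and nonzero by Step 1, so $W$ is finite there too, except possibly where $X=\infty$. A modular function on a genus zero curve whose only poles lie at $X=\infty$ is a polynomial $H(X)$. Finally, at $q=0$ we have $Z=1+O(q)$ and $Z_q=O(q)$, so $u=O(q)$ and $u_q=O(q)$. Hence $W=O(q)$, and since $X=q+O(q^2)$ this gives $H(0)=0$, i.e.\ $H$ has zero constant term as claimed.
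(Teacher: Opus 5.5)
\textbf{There is a genuine gap: the target of Step~1 is impossible, and Step~3 depends on it.}

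Your Step~2 is fine; the bookkeeping closes up to $W=-\frac{Q}{36Z^2}+\frac{2}{Z^3}\bigl(M_q-\tfrac13PM\bigr)-\frac{3M^2}{Z^4}$. The problem is Step~1. Since $X$ is $\Gamma$-invariant, $Z^2=(X_q/X)^2/G(X)$ is a nonzero meromorphic weight-four form on $\Gamma$. By the valence formula, its total order (interior orders divided by the elliptic index, plus orders at the cusps) equals $\frac{1}{\pi}$ times the hyperbolic area of $\Gamma\backslash\mathbb{H}$, which is positive. So for no choice of $G$ is $Z^2$ free of zeros and poles on $\mathbb{H}$ and at the cusps.

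Concrete failures:
\begin{itemize}
\item For $\ell=1$ one has $X_q/X=R/Q$, and $X=1/j$ has a triple pole at $\rho=e^{2\pi i/3}$. Hence $(X_q/X)^2/G(X)$ has order $3\deg G-2\neq 0$ at $\rho$ for every polynomial $G$. With $G=1-1728x$ you get $Z^2=Q$, which has a simple zero at $\rho$, so $Z=Q^{1/2}$ is not even single-valued on $\mathbb{H}$.
\item $Z_2=2P_2-P_1$ vanishes at the order-$4$ elliptic point.
\item $Z_4=\phi^4(q)$ vanishes at the cusp $1/2$.
\end{itemize}
Your local analysis is also wrong at an interior pole of $X$. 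There $X_q/X=\frac{1}{2\pi i}\frac{\ud}{\ud\tau}\log X$ has a simple pole, not a zero.

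\textbf{What the construction must actually achieve.} The zeros of $Z$ forced by the valence formula must all lie over $X=\infty$. The degree of $G$ puts them there, through the pole of $\sqrt{G(X)}$. This is why the paper's Hauptmodul has its pole at a second cusp or at an elliptic point of order $h>2$, or of order $2$ when neither exists.

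\textbf{Consequence for Step~3.} Step~3 must be redone, because $W$ genuinely has poles where $Z$ vanishes. Put $F=Z^2$. Then $W=F_{qq}/F^2-\tfrac54 F_q^2/F^3$, which depends only on $Z^2$, so the branching of $Z$ is harmless. If $F$ has a zero of order $m>0$ at $\tau_0\in\mathbb{H}$, then $W$ has a pole of order $m+2$ there; the leading coefficient is a nonzero multiple of $m(m+4)$. What you must prove is that $F$ is holomorphic and nonvanishing at every point and cusp where $X$ is finite. Then the poles of $W$ lie only over $X=\infty$, and your ``polynomial in the Hauptmodul'' and $W=O(q)$ arguments go through.

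Establishing that such a $G$ exists is exactly what the paper takes from Huber--Schultz--Ye's Theorem~2.2. The paper simply cites that result after rewriting \eqref{E:Zde} as $2ZZ_{qq}-3(Z_q)^2=Z^4H$.
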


\begin{proof}
By the quotient rule from calculus we have
$$
\frac{1}{Z_q}\left(\frac{Z_{q}^2}{Z^3}\right)_{\!q}
= \frac{2ZZ_{qq}-3(Z_q)^2}{Z^4}
$$
and so \eqref{E:Zde} can be written as
\begin{equation}
    \label{E:Zrewrite}
    2ZZ_{qq}-3(Z_q)^2 = Z^4H.
\end{equation}
Therefore, the theorem is just a restatement of
the result of T.~Huber, D.~Schultz and D.~Ye~\cite[Theorem~2.2]{huberActa}. 
\end{proof}

The next result shows that when the weight two modular form~$Z$ is expanded
in powers of the Hauptmodul~$X$, the coefficients satisfy a linear recurrence
relation which can be written explicitly in terms of the coefficients
of the polynomials $G$ and $H$.
\begin{theorem}
\label{T:2}
Suppose $X$, $Z$, $G$ and $H$ are as in Theorem~\ref{T:1}, and let $k=\text{\rm{max}}\left\{k_1,k_2\right\}$ be the maximum of the degrees of $G(x)$ and $H(x)$.
The coefficients $T(n)$ in the expansion
\begin{equation}
\label{E:Zseries}
Z=\sum_{n=0}^\infty T(n)X^n
\end{equation}
satisfy the $(k+1)$-term linear recurrence relation
\begin{equation}
\label{E:rec}
\sum_{j=0}^{k_1} (n+1)(n+1-j)(n+1-\tfrac{j}{2})g_j\,T(n+1-j) 
= \sum_{j=1}^{k_2} (n+1-\tfrac{j}{2})h_j\,T(n+1-j)
\end{equation}
where $g_j$ and $h_j$ are as in~\eqref{E:GHdefn} and $g_0$ is defined to be~$1$.
Equivalently,
\begin{equation}
\label{E:rec1}
(n+1)^3T(n+1)=\sum_{j=1}^k (n+1-\tfrac{j}{2})
\left\{h_j-(n+1)(n+1-j)g_j\right\}T(n+1-j)
\end{equation}
where $g_j$ (or $h_j)$ is defined to be zero if $j>k_1$ (resp., if $j>k_2$). 
\end{theorem}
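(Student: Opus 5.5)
The plan is to convert the differential equation \eqref{E:Zde} of Theorem~\ref{T:1} into a linear third-order differential equation for $Z$ as a function of $X$, and then read off the coefficient recurrence. Write $\theta=X\frac{\ud}{\ud X}$. By \eqref{E:Zdefn} we have $X_q = Z\sqrt{G(X)}\,X$, so for any function $F$ of $X$,
$$
F_q=\sqrt{G}\,Z\,\theta F .
$$
This chain rule is the only link between $q$-derivatives and $X$-derivatives that we need.

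\textbf{Step 1.} We start from the form \eqref{E:Zrewrite}, that is $2ZZ_{qq}-3Z_q^2=Z^4H(X)$. Put $Z_q=\sqrt{G}\,Z\,\theta Z$. Applying the chain rule again gives
$$
Z_{qq}=\sqrt{G}Z\,\theta\bigl(\sqrt{G}Z\theta Z\bigr)
=GZ\Bigl(\tfrac{\theta G}{2G}Z\theta Z+(\theta Z)^2+Z\theta^2Z\Bigr).
$$
Substituting and dividing by $Z^2G$, we obtain
$$
2Z\theta^2 Z-(\theta Z)^2+\frac{\theta G}{G}Z\theta Z=\frac{H}{G}Z^2 .
$$
This equation is quadratic in $Z$, so it is not yet linear.

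\textbf{Step 2.} We linearize by setting $Z=y^2$. Then $\theta Z=2y\theta y$ and $\theta^2 Z=2(\theta y)^2+2y\theta^2y$, so the $(\theta y)^2$ terms cancel and we get the second-order linear equation
$$
4G\,\theta^2 y+2(\theta G)\,\theta y-H\,y=0 .
$$
The standard fact (Clausen/Appell) is that the symmetric square of a second-order equation gives a linear third-order equation for $Z=y^2$. Alternatively, we can differentiate the Step~1 identity once more and divide by $Z$. This gives
$$
G\,\theta^3Z+\tfrac32(\theta G)\,\theta^2Z+\Bigl(\tfrac12\theta^2G-H\Bigr)\theta Z-\tfrac12(\theta H)\,Z=0 .
$$
To be safe I would derive this directly, by differentiating the expression $2Z\theta^2Z-(\theta Z)^2+\ldots$ and using the equation itself to eliminate the nonlinear terms. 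Getting these coefficients right is the main bookkeeping obstacle.

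\textbf{Step 3.} Insert $G=\sum g_jX^j$, $H=\sum h_jX^j$, and $Z=\sum T(m)X^m$, using $\theta X^m=mX^m$ and $\theta G=\sum jg_jX^j$. Extract the coefficient of $X^{n+1}$; the term with index $j$ contributes through $m=n+1-j$. The $G$-part gives
$$
g_j\Bigl[m^3+\tfrac32 jm^2+\tfrac12 j^2m\Bigr]=g_j\,m(m+j)\bigl(m+\tfrac j2\bigr)=g_j\,(n+1-j)(n+1)\bigl(n+1-\tfrac j2\bigr).
$$
The $H$-part gives $-h_j\bigl(m+\tfrac j2\bigr)=-h_j\bigl(n+1-\tfrac j2\bigr)$. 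This is exactly \eqref{E:rec}.

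\textbf{Step 4.} Separate the $j=0$ term, using $g_0=1$ and $h_0=0$. It is $(n+1)^3T(n+1)$, and moving the remaining terms to the right-hand side yields \eqref{E:rec1}. The conventions $g_j=0$ for $j>k_1$ and $h_j=0$ for $j>k_2$ make the sums run to $k$, so the recurrence has $k+1$ terms.

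The main obstacle is Step~2: producing the correct linear third-order equation with the factors $\tfrac32$ and $\tfrac12$. Those factors are precisely what make the $G$-coefficient factor as $m(m+j)(m+j/2)$.
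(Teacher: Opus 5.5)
Your proposal is correct and follows the paper's route. The same chain-rule substitution $q\frac{\ud}{\ud q}=Z\sqrt{G}\,\theta$ turns \eqref{E:Zrewrite} into the paper's nonlinear equation \eqref{E:nonlinear}, with $\uD=\sqrt{G}\,\theta$. The paper then simply cites \cite[Theorem~3.1]{cooperAperyLike} for the recurrence, whereas you carry out that step yourself, and your third-order coefficients $\tfrac32\theta G$, $\tfrac12\theta^2G-H$, $-\tfrac12\theta H$ and the factorization $m(m+j)(m+\tfrac j2)$ are right.

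One simplification. If you apply $\theta$ to $2GZ\theta^2Z-G(\theta Z)^2+(\theta G)Z\theta Z=HZ^2$, the terms $(\theta Z)(\theta^2Z)$ and $(\theta Z)^2$ cancel identically. Dividing by $2Z$ then gives the linear equation directly, with no need to substitute the original equation back in and no need for the $Z=y^2$ detour.
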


Before starting the proof, it is worth pointing out that the series~\eqref{E:Zseries}
converges at least for $|X|<|r|$, where $r$ is a root of $G(x)$ of minimum
modulus. This follows either from the Frobenius theory of power series
solutions of linear differential equations e.g.,~\cite{frobenius},~\cite[p. 67]{poole}
or from the asymptotic formula for $T(n)$
in~\cite[Th. 10.1]{cooperAperyLike}.
A more precise analysis of convergence will be provided in Section~\ref{S:Con}.

\begin{proof}
By the definition of~$Z$ in~\eqref{E:Zdefn} and the chain rule, we have
$$
q\frac{\ud}{\ud q} = q\frac{\ud X}{\ud q} \frac{\ud}{\ud X}
= ZX\sqrt{G(X)} \frac{\ud}{\ud X} = Z\uD
$$
where $\uD$ is the differential operator defined by
$$
\uD = X\sqrt{G(X)} \frac{\ud}{\ud X}.
$$
With this change of variables, the differential equation~\eqref{E:Zrewrite} becomes
$$
2Z^2D(ZD(Z)) - 3Z^2(DZ)^2 = Z^4H.
$$
On dividing by $Z^2$ and applying the product rule to the first term, this simplifies to
\begin{equation}
\label{E:nonlinear}
2Z(D^2Z) - (DZ)^2 = Z^2H
\end{equation}
and the recurrence relation for the coefficients follows 
from\footnote{The differential equation~\eqref{E:nonlinear} was studied in~\cite[Theorem~3.1]{cooperAperyLike} in the form
\begin{equation}
\label{E:nonlinear1}
2Z(D^2Z) - (DZ)^2 = 2Z^2H,
\end{equation}
that is, with $2H$ in place of $H$. See also~\cite[p. 292 and Tables 14.1 and 14.2]{cooperbook}.
We have decided to use~\eqref{E:nonlinear}
instead of~\eqref{E:nonlinear1}
because then the recurrence relation~\eqref{E:rec} and the check~\eqref{E:check1}
become simpler with this change of notation.
The polynomials $G$ and $H$ in Theorem~\ref{T:1} correspond to the polynomials
$w$ and $R$, respectively, in~\cite{huberActa}.}~\cite[Theorem~3.1]{cooperAperyLike}.
\end{proof}

The next result is a partial converse to Theorem~\ref{T:1}.
It shows that the Hauptmodul~$X$ is determined by the
polynomials~$G(x)$ and~$H(x)$.
\begin{theorem}
\label{T:4}
Let $X$, $Z$, $G$ and $H$ be as for Theorem~\ref{T:1}.
The coefficients in the $q$-expansions
$$
X = q+a_2q^2+a_3q^3+\cdots\quad\text{and}\quad Z=1+b_1q+b_2q^2+\cdots
$$
can be calculated from the polynomials $G(x)$ and $H(x)$.
\end{theorem}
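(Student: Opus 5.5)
We compute the two $q$-expansions simultaneously, order by order in $q$, using the two relations available: the definition~\eqref{E:Zdefn} and the expansion~\eqref{E:Zseries}. Theorem~\ref{T:2} shows that the coefficients $T(n)$ are determined by $G$ and $H$ through the recurrence~\eqref{E:rec1}, starting from $T(0)=1$. So $Z$, viewed as a power series in $X$, is known:
$$
Z=\sum_{n\ge0}T(n)X^n=:F(X).
$$
Substituting this into~\eqref{E:Zdefn} gives a first order differential equation for $X$ as a function of $q$:
$$
q\frac{\ud X}{\ud q}=X\sqrt{G(X)}\,F(X).
$$
The right side is a power series $\Phi(X)=X+c_2X^2+c_3X^3+\cdots$. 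The coefficients $c_j$ are determined by $G$ and $H$; the square root is taken with $\sqrt{G(0)}=1$.

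The key step is to solve $q X'(q)=\Phi(X)$ with the normalization $X=q+O(q^2)$. Write $X=q+\sum_{m\ge2}a_mq^m$ and compare coefficients of $q^m$. The left side contributes $m a_m$. On the right side, the term $X$ contributes $a_m$. Every term $c_jX^j$ with $j\ge2$ contributes to $q^m$ only through products of $a_i$ with $i\le m-1$. Hence
$$
(m-1)a_m=\text{polynomial in }a_2,\dots,a_{m-1},\,c_2,\dots,c_m.
$$
Since $m-1\neq0$ for $m\ge2$, this determines each $a_m$ recursively and uniquely.

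Equivalently, one can use the inverse function. Because $\Phi(X)=X(1+O(X))$, we have
$$
\log q=\log X+\int_0^X\Bigl(\frac{1}{\Phi(t)}-\frac1t\Bigr)\ud t .
$$
Exponentiating gives $q$ as a power series in $X$ with leading term $X$, and series reversion then yields $X(q)$. This gives a closed description: $q=X\exp\bigl(\int_0^X(\frac{1}{\Phi(t)}-\frac1t)\,\ud t\bigr)$.

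Once the $a_m$ are known, the coefficients $b_m$ follow by substituting the series for $X$ into $Z=F(X)$ and expanding; the constant term is $T(0)=1$. One could also read $Z$ directly off~\eqref{E:Zdefn}.

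The only delicate point is to check that the normalization really pins down the solution. A priori, $qX'=\Phi(X)$ is also solved by $X(cq)$ for any constant $c$, and the condition $a_1=1$ is what removes this freedom. This corresponds to the fact that the coefficient of $q^1$ gives the trivial equation $1\cdot a_1=a_1$, so $a_1$ is free and must be fixed by hypothesis. It also uses that $Z=1+O(q)$, so that $T(0)=1$, which is consistent with $X=q+O(q^2)$ and~\eqref{E:Zdefn}.

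Everything else is formal power series manipulation, so I expect no real obstacle beyond this bookkeeping. The one input to verify is that the recurrence~\eqref{E:rec1} determines all $T(n)$ from $T(0)=1$ alone. It does, since the left side has the coefficient $(n+1)^3\neq0$.
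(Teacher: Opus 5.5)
Your proof is correct, but it takes a different route from the paper.

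\textbf{The paper's route.} The paper never expands $Z$ in powers of $X$. It substitutes the definition \eqref{E:Zdefn} directly into the $q$-form \eqref{E:Zrewrite} of the differential equation. It then observes that $[q^n]Z = na_{n+1}+(\text{terms in }a_2,\dots,a_n)$. Consequently $a_{n+1}$ enters the coefficient of $q^n$ in $2ZZ_{qq}-3Z_q^2-Z^4H(X)$ only through the term $2n^3a_{n+1}$. The terms $Z_q^2$ and $Z^4H(X)$ cannot contain $a_{n+1}$, the latter because $H(X)=O(q)$. Since $2n^3\neq 0$ for $n\ge1$, the paper solves for $a_{n+1}$ and then reads $Z$ off from \eqref{E:Zdefn}.

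\textbf{Your route.} You split the problem into two steps. First, you use the recurrence \eqref{E:rec1} of Theorem~\ref{T:2}, whose leading coefficient $(n+1)^3$ is nonzero, to obtain $Z=F(X)$ as a power series in $X$. Second, you solve the autonomous first-order equation $qX'=X\sqrt{G(X)}F(X)$, where the nondegeneracy is $m-1\neq0$.

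\textbf{Comparison.} Your route gives structural transparency and a closed form, $q=X\exp\bigl(\int_0^X(\frac{1}{t\sqrt{G(t)}F(t)}-\frac1t)\,\ud t\bigr)$. Thus $X(q)$ comes from a single reversion of an explicitly known series, which is the classical ``mirror map'' picture. The paper's route is independent of Theorem~\ref{T:2}, whose proof is only cited from \cite{cooperAperyLike}. It uses nothing beyond Theorem~\ref{T:1} and works entirely with $q$-expansions, which is exactly what the Maple code in Fig.~\ref{fig:1} implements.

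Your remark that the normalization $a_1=1$ removes the scaling freedom $X(cq)$ matches the paper's recursion. There the $q^0$ equation is vacuous and the recursion only starts at $n\ge 1$, so $a_1$ is likewise fixed by hypothesis.
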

\begin{proof}
We use induction on $n$. Suppose that the coefficients $a_2,\,a_3,\ldots,a_n$ are
known and we wish to compute $a_{n+1}$ for some $n\geq 1$. Since
\begin{align*}
\frac{1}{X}\,q\frac{\ud X}{\ud q}
&= \frac{q+2a_2q^2+3a_3q^3+\cdots+(n+1)a_{n+1}q^{n+1}+\cdots}
{q+a_2q^2+a_3q^3+\cdots+a_{n+1}q^{n+1}+\cdots} \\
&= \frac{1+2a_2q+3a_3q^2+\cdots+(n+1)a_{n+1}q^{n}+\cdots}
{1+a_2q+a_3q^2+\cdots+a_{n+1}q^{n}+\cdots} \\
\end{align*}
the coefficient of $q^n$ is given by
$$
[q^n] \left(\frac{1}{X}\,q\frac{\ud X}{\ud q}\right)
= na_{n+1} + (\text{some function of $a_2,\,a_3,\ldots,a_n$}).
$$
Hence by~\eqref{E:Zdefn} it follows that
$$
[q^n] Z =na_{n+1} + (\text{some function of $a_2,\,a_3,\ldots,a_n$}).
$$
From this, we deduce further that
$$
[q^n] \left(ZZ_{qq}\right) = n^3a_{n+1} + (\text{some function of $a_2,\,a_3,\ldots,a_n$})
$$
while the coefficients of $q^n$ in $Z_{q}^2$ and $Z^4H(X)$
are functions of $a_2,\,a_3,\ldots,a_n$ only, and do not involve $a_{n+1}$.
It follows that equating coefficients of $q^n$ in~\eqref{E:Zrewrite} gives an
expression for $a_{n+1}$ in terms of $a_2,\,a_3,\ldots,a_n$. In this way, the $q$-expansion
of $X$ is deduced from the polynomials $G(x)$ and $H(x)$. The $q$-expansion of $Z$
is then obtained from the $q$-expansion of $X$ by using~\eqref{E:Zdefn}.
\end{proof}

A computer implementation of Theorem~\ref{T:4} is given in the next section --- see
Fig.~\ref{fig:1}.
If we compute the $q$-expansions of~\eqref{E:Zdefn} and~\eqref{E:Zseries}
and compare coefficients of~$q$ then we deduce
\begin{equation}
    \label{E:check2}
    b_1 = \frac{h_1}{2}=T(1),
\end{equation}
while comparing coefficients of~$q$ in~\eqref{E:Zrewrite} gives
\begin{equation}
    \label{E:check1}
    a_2=\frac12(g_1+h_1).
\end{equation}
They may be used as checks when working with these functions.

The final result of this section, due to
Chan et al.~\cite{domb}, provides a classification of series for $1/\pi$. In view of Theorems~\ref{T:1} and~\ref{T:2}, the hypotheses reduce to
the single assumption of the existence of a Hauptmodul that satisfies~\eqref{E:X}.
\begin{theorem}
\label{T:3}
\begin{enumerate}
\item Suppose $X=X(q)=q+O(q^2)$ is a Hauptmodul of a Fuchsian group $\Gamma$ that is a normal
extension of some $\Gamma_0(\ell)$.
\item
Let $Z(q)$, $G(x)$, $H(x)$ and $T(n)$ be as for Theorems~\ref{T:1} and~\ref{T:2}.
\item 
Let $N$ be a positive rational number and suppose that
$X=X(q)$ and \mbox{$Y=Y(q)=X(q^N)$} are related by the modular equation $f(X,Y)=0.$
\item
\label{E:lambda}
Let
$$
\lambda = \lambda(q) = \frac{X}{2} \, \frac{\ud}{\ud X}
\left(\frac{Y}{X} \left(\frac{G(Y)}{G(Y)}\right)^{1/2} \div \frac{\ud Y}{\ud X}\right)
$$
where the derivatives are calculated from the modular equation by implicit differentiation.
\item (Positive series)
Let $X_{\ell,N} = X(e^{-2\pi\sqrt{N/\ell}})$ and 
$\lambda_{\ell,N} = \lambda(e^{-2\pi/\sqrt{N\ell}}).$
If $|X_{\ell,N}|<|r|$, where $r$ is a root of $G(x)$ of minimum modulus, then
\begin{equation}
    \label{E:CCL}
    \sum_{n=0}^\infty T(n)(n+\lambda_{\ell,N})\left(X_{\ell,N}\right)^n
    = \frac{1}{2\pi} \times \frac{1}{\sqrt{G(X_{\ell,N})}} \times \sqrt{\frac{\ell}{N}}.
\end{equation}
\item (Negative series)
Let $X_{L,N} = X(-e^{-2\pi\sqrt{N/L}})$ and 
$\lambda_{L,N} = \lambda(-e^{-2\pi/\sqrt{NL}})$ where 
$L=\frac{4\ell}{\text{\rm{gcd}}(4,\ell)}$
and {\rm{gcd}} is the greatest common divisor.
If $|X_{L,N}|<|r|$, where $r$ is a root of $G(x)$ of minimum modulus, then
\begin{equation}
    \label{E:CCLminus}
    \sum_{n=0}^\infty T(n)(n+\lambda_{L,N})\left(X_{L,N}\right)^n
    = \frac{1}{2\pi} \times \frac{1}{\sqrt{G(X_{L,N})}} \times \sqrt{\frac{L}{N}}.
\end{equation}
\end{enumerate}
\end{theorem}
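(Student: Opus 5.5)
The plan is the standard Chan--Chan--Liu argument: combine the Fricke-type involution~\eqref{E:X} with the transformation law of the weight two form $Z$, and differentiate once. I take the definition of $\lambda$ with $G(X)/G(Y)$ inside the square root; the displayed $G(Y)/G(Y)$ looks like a typo.

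\textbf{Step 1: the multiplier $M_N$.} Put $M_N(q)=Z(q)/Z(q^N)$, a weight-zero function. Differentiating $Y=X(q^N)$ with the operator $q\frac{\ud}{\ud q}$ gives $Y_q = N\,Z(q^N)\,Y\sqrt{G(Y)}$. By~\eqref{E:Zdefn}, $X_q = Z X\sqrt{G(X)}$. Hence
$$
\frac{\ud Y}{\ud X} = \frac{N}{M_N}\,\frac{Y\sqrt{G(Y)}}{X\sqrt{G(X)}},
\qquad\text{so}\qquad
M_N = N\,\frac{Y}{X}\left(\frac{G(Y)}{G(X)}\right)^{1/2}\div\frac{\ud Y}{\ud X}.
$$
This is the quantity inside the definition of $\lambda$, up to the factor $N$. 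So $\lambda = \frac{X}{2N}\,\frac{\ud M_N}{\ud X}$, and it is algebraic in $X$ by the modular equation.

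\textbf{Step 2: the transformation law for $Z$.} Since $Z$ is a weight two form for a group containing the Fricke involution $W_\ell$, we have $Z(-1/(\ell\tau)) = \pm\ell\tau^2 Z(\tau)$. The sign is fixed by~\eqref{E:X} and positivity on the imaginary axis. Set $\tau = i/\sqrt{N\ell}$, so that $q=e^{-2\pi/\sqrt{N\ell}}$ and $q^N = e^{-2\pi\sqrt{N/\ell}}$. At this point
$$
Z\bigl(e^{-2\pi/\sqrt{N\ell}}\bigr) = \frac{\ell}{N}\cdot\frac{N}{\ell}\cdots,
$$
which amounts to the relation $Z(q)=\frac{N}{\ell}\cdot\ell\tau'^2\cdots$. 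Concretely, with $t=\sqrt{N/\ell}$ this yields $Z(e^{-2\pi/\sqrt{N\ell}}) = N\,Z(e^{-2\pi\sqrt{N/\ell}})$, and also the value of $X(q)$ equals $X_{\ell,N}$ by~\eqref{E:X}.

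\textbf{Step 3: the derivative identity.} Differentiate the transformation law in $\tau$. Because $P$-type quasimodular terms appear, I would instead use $q\frac{\ud}{\ud q}\log Z$ in the form $\frac{Z_q}{Z}$. Differentiating $Z(-1/\ell\tau)=\ell\tau^2Z(\tau)$ produces an extra term $2/\tau$. Evaluating this at the fixed point configuration gives an explicit identity of the form
$$
\frac{Z_q}{Z}(q)+N\frac{Z_q}{Z}(q^N)=\frac{\sqrt{N\ell}}{\pi}\cdot(\text{const})
$$
that involves the $\frac{1}{\pi}$ term.

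\textbf{Step 4: assembling the series.} Write $\sum T(n)\,n\,X^n = X\frac{\ud Z}{\ud X} = \frac{Z_q}{Z\,X\sqrt{G(X)}}\,X = \frac{Z_q}{Z\sqrt{G(X)}}$. Then express $Z_q(q^N)$ through $Z(q)$ and $M_N$: logarithmic differentiation of $Z(q)=M_N Z(q^N)$ gives
$$
\frac{Z_q}{Z}(q) = \frac{(M_N)_q}{M_N} + N\frac{Z_q}{Z}(q^N).
$$
The term $(M_N)_q$ equals $Z X\sqrt{G(X)}\,\frac{\ud M_N}{\ud X}$, and it is exactly what $\lambda$ absorbs. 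Substituting the Step 3 identity and $M_N=N$ (from Step 2) gives
$$
\sum T(n)(n+\lambda)X^n = \frac{1}{2\pi}\sqrt{\frac{\ell}{N}}\,\frac{1}{\sqrt{G}}.
$$
Convergence follows from $|X_{\ell,N}|<|r|$ together with the remark after Theorem~\ref{T:2}.

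\textbf{Step 5: negative series.} For the negative series, replace $\tau$ by $\tau+\tfrac12$. The map $q\mapsto -q$ conjugates $W_\ell$ into an Atkin--Lehner-type involution of level $L=4\ell/\gcd(4,\ell)$. Repeating Steps 2--4 at $\tau=\tfrac12 + i/\sqrt{NL}$ gives~\eqref{E:CCLminus}.

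\textbf{Main obstacle.} The main difficulty is the bookkeeping in Steps 2--3: getting the exact normalisation of the $1/\pi$ term and the sign, and checking that the point evaluations of $\lambda$ and $X$ at $q$ versus $q^N$ match the statement's choice $\lambda_{\ell,N}=\lambda(e^{-2\pi/\sqrt{N\ell}})$. I would first test the constants on level~1 with $N=2$ against a known Ramanujan series.
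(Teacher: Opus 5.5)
Your outline is the Chan--Chan--Liu argument, which is all the paper's proof consists of (it simply cites their Theorem~2.1). Steps 1--3 contain the right ingredients: $M_N=N$ at $q_0:=e^{-2\pi/\sqrt{N\ell}}$, and the differentiated Fricke relation $\frac{Z_q}{Z}(q_0)+N\frac{Z_q}{Z}(q_0^N)=\frac{\sqrt{N\ell}}{\pi}$, whose constant is $1$.

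The gap is Step~4, which you assert rather than compute; as set up, it fails. For $N>1$ the points $q_0$ and $q_1:=q_0^N=e^{-2\pi\sqrt{N/\ell}}$ lie on opposite sides of the Fricke fixed point $e^{-2\pi/\sqrt{\ell}}$.
\begin{itemize}
\item The power series $\sum T(n)X^n$ equals $Z$ only on the side containing $q=0$. So the series must be evaluated at $q_1$, not at $q_0$, which is where $M_N=N$ and $\lambda$ live. If you evaluate everything at $q_0$, you get $\sqrt{N\ell}/(2\pi\sqrt{G})$, which is off by a factor $N$.
\item Differentiating \eqref{E:X} gives $X_q(q_0)=-NX_q(q_1)$, while $Z(q_0)=NZ(q_1)>0$. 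Hence the analytic function $X_q/(XZ)$, which you write as $\sqrt{G(X)}$, equals $+\sqrt{G(X_{\ell,N})}$ at $q_1$ but $-\sqrt{G(X_{\ell,N})}$ at $q_0$.
\end{itemize}
The correct bookkeeping first eliminates $\frac{Z_q}{Z}(q_0)$ between Step~3 and your logarithmic derivative:
\[
\frac{Z_q}{Z}(q_1)=\frac{1}{2\pi}\sqrt{\frac{\ell}{N}}-\frac{1}{2N}\,\frac{(M_N)_q}{M_N}(q_0).
\]
Then divide by $\sqrt{G(X_{\ell,N})}$. Use $(M_N)_q(q_0)=-Z(q_0)\,X_{\ell,N}\sqrt{G(X_{\ell,N})}\,\frac{\ud M_N}{\ud X}$ and $Z(q_0)=NZ(q_1)$. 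This yields the stated right-hand side, but with
\[
\lambda_{\ell,N}=-\frac{X}{2N}\,\frac{\ud M_N}{\ud X}\Big|_{q=q_0}.
\]

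So your Step~1 formula $\lambda=\frac{X}{2N}\frac{\ud M_N}{\ud X}$ has the wrong sign.
\begin{itemize}
\item The error lies in identifying the bracket in the definition of $\lambda$ with $M_N/N$. That identification uses the branch $(G(Y)/G(X))^{1/2}=s(q^N)/s(q)$, which equals $-1$ at $q_0$.
\item The formula is meant with the principal root, which equals $1$ at the point $X=Y=X_{\ell,N}$, where implicit differentiation gives $\ud Y/\ud X=-1$. With that branch the bracket is $-M_N/N$, and the paper's $\lambda$ agrees with the display above.
\item You also announce $G(X)/G(Y)$ but then use $G(Y)/G(X)$; the latter is the right reading of the typo.
\end{itemize}

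A concrete check is $\ell=4$, $N=2$. Here $X=k^2k'^2/16$ and $M_2=4/(1+k')^2$. At $q_0=e^{-\pi/\sqrt2}$ one has $k'=\sqrt2-1$ and $X=(5\sqrt2-7)/8$, giving $\frac{X}{2N}\frac{\ud M_2}{\ud X}=\frac{\sqrt2-4}{14}$. But summing $\sum\binom{2n}{n}^3(n+\lambda)X^n$ numerically against $\frac{\sqrt2}{2\pi\sqrt{1-64X}}$ forces $\lambda=\frac{4-\sqrt2}{14}$.

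The ``bookkeeping'' you postponed is therefore not cosmetic. Tracking the two evaluation points, and the sign change of $\sqrt{G}$ across the Fricke fixed point, is what produces both the constant $\frac{1}{2\pi}\sqrt{\ell/N}$ and the correct $\lambda$. The same care is needed in Step~5, whose $q\mapsto -q$ reduction is otherwise the standard one.
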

\begin{proof}
This is Theorem~2.1 in the work by H. H. Chan, S. H. Chan and Z.-G. Liu~\cite{domb}.
See also~\cite[Th. 14.14]{cooperbook}.
\end{proof}

The series~\eqref{E:ramanujan-gosper},
\eqref{E:ramirrational}, \eqref{E:deg427} and~\eqref{E:deg427c} are instances of
Theorem~\ref{T:3} corresponding to $(\ell,N) = (2,29)$, $(4,15)$, $(1,427)$ and $(1,61/7)$,
respectively. Examples corresponding to $(\ell,N) = (4,3),\;(5,2)$ and $(3,2)$
are worked out
in detail in~\cite[Sec. 3]{domb}, \cite{chancooper}
and~\cite[pp. 632--635]{cooperbook}, respectively. Further examples 
are discussed in Section~\ref{S:pi}.

\section{Examples of Hauptmoduls and associated weight 2 modular forms}
\label{S:Hauptmoduls}
Hauptmoduls $X=X(q)$ for those $\Gamma_0(\ell)+$ for
which the modular curve has genus 0 are listed in Table~\ref{T:00}.
See also~\cite[Tables 3 and 4a]{conwaynorton}, \cite[Table 3]{huberActa}
and~\cite[Table 3]{jorgenson}.
The corresponding weight two modular forms \mbox{$Z=Z(q)$} that occur
in Theorem~\ref{T:1} are listed in
Table~\ref{T:Z} and the associated polynomials $G(x)$ and $H(x)$ 
are listed in Table~\ref{T:GH}.
In general, the Hauptmodul~$X$ is chosen to be such that it has a zero at the cusp $[i\infty]$, and it has a pole at either a cusp apart from $[i\infty]$ or an elliptic point of period $h>2$, or an elliptic point of period~$2$ if there is no cusp other than~$[i\infty]$ or elliptic point of period~$h>2$. 

We illustrate by considering
the case $\ell=1$ in detail. From Tables~\ref{T:00} and~\ref{T:GH} we have
$$
X = \frac{\eta_1^{24}}{Q^3(q)},\quad G(x)=1-1728x\quad\text{and}\quad H(x)=240x.
$$
By \eqref{E:Zdefn} the associated weight 2 modular form is given by
$$
Z = \frac{1}{\sqrt{G(X)}} q \frac{\ud}{\ud q} \log X
= \frac{1}{\sqrt{1-1728\frac{\eta^{24}(\tau)}{Q^3(q)}}}
\;q\frac{\ud}{\ud q} \left(\log \eta^{24}(\tau)-\log Q^3(q)\right).
$$
By~\eqref{E:rde1} and~\eqref{E:rde2} this simplifies to
$$
Z = \frac{Q^{3/2}(q)}{R(q)} \left(P(q) - \frac{P(q)Q(q)-R(q)}{Q(q)}\right)
$$
and therefore
$$
Z=Q^{1/2}=Q^{1/2}(q).
$$
Now we will show that $Z$ satisfies~\eqref{E:Zde}.
The derivatives of $Z$ may be calculated using~\eqref{E:rde2} and we find that
$$
Z_q = \frac{1}{6Q^{1/2}}\left(PQ-R\right)
$$
and
$$
Z_{qq} = \frac{1}{72Q^{3/2}}\left(3P^2Q^2+5Q^3-6PQR-2R^2\right).
$$
Hence,
\begin{align*}
2ZZ_{qq}-3(Z_q)^2
&= \frac{1}{36Q}\left(3P^2Q^2+5Q^3-6PQR-2R^2\right) - \frac{1}{12Q}(PQ-R)^2 \\
&= \frac{5}{36Q}\left(Q^3 - R^2\right).
\intertext{On applying~\eqref{E:rde1} we deduce that}
2ZZ_{qq}-3(Z_q)^2
&=  \frac{240 \eta_1^{24}}{Q} = 240 \left(\frac{\eta_1^{24}}{Q^3}\right) \times Q^2 = 240XZ^4
\end{align*}
and so $Z$ satisfies~\eqref{E:Zrewrite}, and hence also~\eqref{E:Zde}, with $H(x) = 240x$.

Next, by~\eqref{E:rec1} the coefficients $T(n)$ in the expansion 
$\displaystyle{Z=\sum_{n=0}^\infty T(n)X^n}$
satisfy the two-term recurrence relation
$$
(n+1)^3T(n+1) = (n+1-\tfrac12) \left\{ 240 - (-1728)(n+1)n\right\}T(n)
$$
and this simplifies to
$$
T(n+1) = 24(2n+1)(6n+5)(6n+1)T(n).
$$
Since $T(0)=1$ it follows that
\begin{equation}
\label{E:Tn}
T(n) = {6n \choose 3n}{3n \choose n}{2n \choose n}.
\end{equation}
Let us apply the checks~\eqref{E:check2} and~\eqref{E:check1} with 
$G(x)=1-1728x$ and $H(x)=240x$.
By~\eqref{E:check1} we should have
$$
2a_2 = g_1+h_1 = -1728 + 240 = -1488 \quad\text{so} \quad a_2 = -744
$$
and this is in agreement with the coefficient of $q^2$ in the expansion
$$
X = \frac{\eta^{24}}{Q^3(q)} = q - 744q^2 + 356652q^3+O(q^4).
$$
For the other check~\eqref{E:check2} we should have
$$
2T(1) = h_1 = 240 \quad \text{and so} \quad T(1) = 120,
$$
and this agrees with the formula for $T(1)$ given by~\eqref{E:Tn}, since
$$
{6 \choose 3} \times {3 \choose 1} \times {2 \choose 1} = 20 \times 3 \times 2 = 120.
$$
The Maple code in Fig.~\ref{fig:1} takes $G(x)=1-1728x$
and $H(x)=240x$ as input, and
uses the method described in Theorem~\ref{T:4} to
compute the expansions of the Hauptmodul $X$ and the associated weight 2 modular
form $Z$ as far as $q^m$, where $m$ is set to~20. In preparing the tables for
this manuscript, the polynomials $G(x)$ and $H(x)$ for each group $\Gamma_0(\ell)+$
were entered and used to compute
the $q$-expansions of $X$ and $Z$ as far as $q^{140}$. The $q$-expansions
were then checked against the formulas in Tables~\ref{T:00} and~\ref{T:Z}.
The data in Table~\ref{T:GH} agrees with~\cite[Table 5]{huberActa}, except
for $\ell=29, 41, 87, 94, 105$ where we have made some corrections and
for $\ell=28, 70, 78$ which were omitted.

\begin{figure}[htp]
    \centering
    \includegraphics[width=12cm]{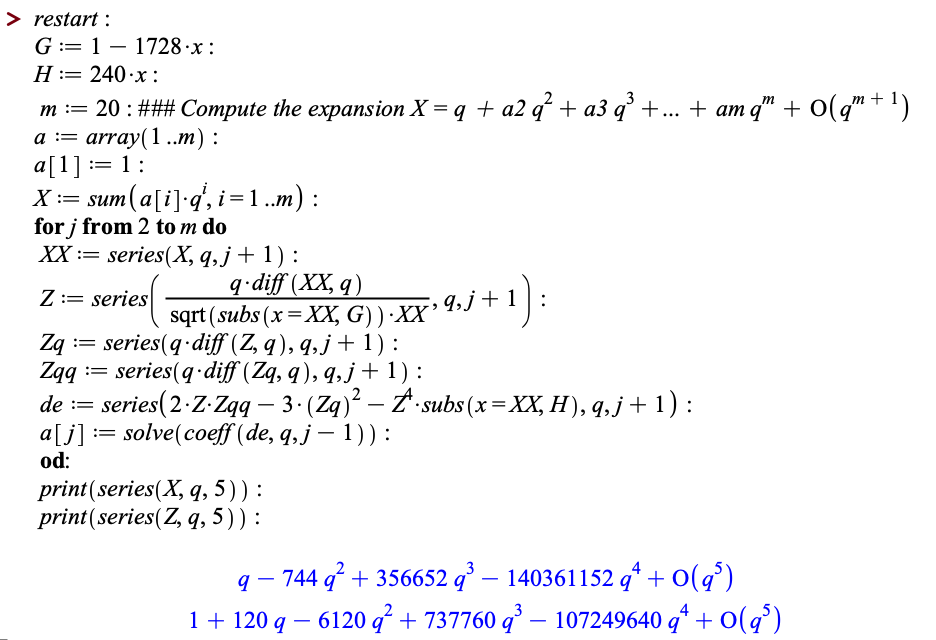}
    \caption{Maple code to compute the expansions of $X$ and $Z$, given $G$ and $H$.}
    \label{fig:1}
\end{figure}

The choice of Hauptmodul is not necessarily unique as the following
example from $\Gamma_0(30)+$ illustrates.
From~\cite[Table 3]{huberActa} or~\cite[Table 3]{jorgenson} we have
$$
\frac{1}{X} = \frac{\eta_1\eta_2\eta_{15}\eta_{30}}{\eta_3\eta_5\eta_6\eta_{10}}
+ \frac{\eta_3\eta_5\eta_6\eta_{10}}{\eta_1\eta_2\eta_{15}\eta_{30}} +c
= \frac{1}{q}+(c+1)+4q+2q^2+6q^3+\cdots
$$
for an arbitrary constant $c$ and so
$$
X = q-(c+1)q^2+(c^2+2c-3)q^3-(c^3+3c^2-5c-5)q^4+\cdots.
$$
The polynomials $G(x)$ and $H(x)$ in Theorem~\ref{T:1} are given by
\begin{align}
G(x) &= (1-(c+6)x)(1-(c+2)x)(1-(c+1)x)(1-(c-2)x)(1-(c-3)x) \label{E:G30}
\intertext{and}
H(x) &= (3c+2)x-(\tfrac{57}{4}c^2+22c-29)x^2+(\tfrac{99}{4}c^3+59c^2-143c-92)x^3
\label{E:H30} \\
&\quad -(\tfrac{75}{4}c^4+60c^3-\tfrac{855}{4}c^2-255c+225)x^4 \nonumber \\
& \qquad +\tfrac{21}{4}(c+6)(c+2)(c+1)(c-2)(c-3)x^5 \nonumber
\end{align}
while the associated weight 2 modular form is given by
$$
Z=\left(\frac{\eta_1\eta_2\eta_3\eta_5\eta_6\eta_{10}\eta_{15}\eta_{30}}{X^3}\right)^{1/2} = 1+\tfrac12(3c+2)q+\tfrac38(c^2+12)q^2+\cdots.
$$
The degrees of $G(x)$ and $H(x)$ both reduce by 1 if $c\in\left\{-6,-2,-1,2,3\right\}$.
We choose the value $c=-2$ to coincide with the choice made
in~\cite[Table 5]{huberActa}, but
any of the other four values could equally well have been used.

Let $X_\ell$ be a Hauptmodul in Table~\ref{T:00} and let $Z_\ell$ be the corresponding
weight two modular form in Table~\ref{T:Z}.
Let $T_\ell(n)$ be the coefficients in the expansion
$$
Z_\ell = \sum_{n=0}^\infty T_\ell(n)X_\ell^n.
$$
A recurrence relation for the coefficients $T_{\ell}(n)$ is given in Theorem~\ref{T:2}.
Let $F_\ell(x)$ be the function defined by 
\begin{equation}
    \label{E:Fdef}
F_\ell(x) = \sum_{n=0}^\infty T_\ell(n)x^n.    
\end{equation}
For example, by~\eqref{E:Tn} we have
$$
    F_1(x) = \sum_{n=0}^\infty {6n \choose 3n}{3n \choose n}{2n \choose n}x^n
    = \pFq{3}{2}{\frac16,\frac12,\frac56}{1,1}{1728x}
$$
while similar analysis for levels 2, 3 and 4 gives
$$
F_2(x) = \sum_{n=0}^\infty {4n \choose 2n}{2n \choose n}^2x^n
    = \pFq{3}{2}{\frac14,\frac12,\frac34}{1,1}{256x},
$$
$$
F_3(x) = \sum_{n=0}^\infty {3n \choose n}{2n \choose n}^2x^n
    = \pFq{3}{2}{\frac13,\frac12,\frac23}{1,1}{108x},
$$
and
$$
F_4(x) = \sum_{n=0}^\infty {2n \choose n}^3x^n
    = \pFq{3}{2}{\frac12,\frac12,\frac12}{1,1}{64x}.
$$
Formulas for $T_\ell(n)$ in terms of binomial coefficients, or as sums of binomial coefficients,
are well-known for $\ell\in\left\{1,2,3,4,5,6,7,8,9,10,12\right\}$; see Table~\ref{T:00X}.

\section{Examples of series for $1/\pi$}
\label{S:pi}
In this section, we discuss some well-known examples, and indicate their locations in the
tables.
\subsection{Bauer's series (1859)}
The series
\begin{equation}
\label{E:Bauer}
\frac14-\frac54\left(\frac12\right)^3+\frac94\left(\frac{1\cdot3}{2\cdot4}\right)^3
-\frac{13}{4}\left(\frac{1\cdot3\cdot5}{2\cdot4\cdot6}\right)^3+\cdots= \frac{1}{2\pi}
\end{equation}
was published by Bauer, \cite{bauer}. It corresponds to the data $X_4<0$, $N=2$ in Table~\ref{T:4.1}.
The $n$th term in the series is
$$
{2n \choose n}^3 \left(n+\frac14\right) \left(\frac{-1}{64}\right)^n
\sim \frac{(-1)^n}{(\pi n)^{3/2}}\left(n+\frac14\right)
= O\left(\frac{1}{\sqrt{n}}\right) \quad \text{as} \quad n\rightarrow\infty,
$$
therefore Bauer's series is conditionally convergent and not practical for computing~$\pi$.
Nevertheless, it is the first known example of a Ramanujan-type series for~$1/\pi$.

\subsection{Ramanujan's series (1914)}
\label{S:ramanujan}
Ramanujan \cite{ramanujan_pi} gave 17 series for $1/\pi$, which appear in our tables as:

\medskip
\noindent
\begin{tabular}{lll}
Table~\ref{T:1.1}, & $X_1>0$, & $N=3,\,7$ \\
Table~\ref{T:2.1}, & $X_2>0$, & $N=3,\,5,\,9,\,11,\,29$ \\
Table~\ref{T:2.2}, & $X_2<0$, & $N=5,\,9,\,13,\,25,\,37$ \\
Table~\ref{T:3.1}, & $X_3>0$, & $N=4,\,5$ \\
Table~\ref{T:4.1}, & $X_4>0$, & $N=3,\,7,\,15$. \\
\end{tabular}

\medskip
The parameters $X_\ell$ and $\lambda$ in Ramanujan's series are all rational
numbers except for the series~\eqref{E:ramirrational} which corresponds to the data
$X_4>0$, $N=15$ in Table~\ref{T:4.1} and involves quadratic irrational numbers. 

The series~\eqref{E:ramanujan-gosper} used by Gosper to calculate $\pi$ corresponds to the data
$X_2>0$, $N=29$ in Table~\ref{T:2.1}.

The series corresponding to $X_4>0$, $N=3$ and $N=7$ in Table~\ref{T:4.1} were
used as a plot device in the Walt Disney movie {\em High School Musical};
see~\cite{bbc} for further details.

\subsection{The Chudnovskys' series (1987)}
The series corresponding to $X_1<0$, $N=163$ in Table~\ref{T:1.2}
was discovered by David and Gregory Chudnovsky~\cite{chud}. The series is
\begin{equation}
\label{E:chudnovsky}
12\sum_{n=0}^\infty {6n \choose 3n}{3n \choose n}{2n \choose n}
(545140134n+13591409) \frac{(-1)^n}{640320^{3n+3/2}} = \frac{1}{\pi}
\end{equation}
and each term adds approximately 14 decimal places of precision.
The Chudnovskys used their series to calculate $\pi$ to more than two billion decimal
places, e.g., see \cite{preston}, which at the time was a world record. 
Because the Chudnovskys' series is the fastest converging series with rational
coefficients, it is often used in world record calculations of~$\pi$.

\subsection{The 36 rational series}
\label{S:36rational}
Sixteen of Ramanujan's series involve rational values of $X$ and $\lambda$. 
Other rational series include Bauer's series~\eqref{E:Bauer} and 
the Chudnovskys' series~\eqref{E:chudnovsky}. Further series discovered in
\cite{BerndtChanLiaw2001,borwein-book,borweinrr,clt,chud}
bring the number of known rational series for levels $1\leq \ell \leq 4$ to 36.
For the record, the 36 rational series are:

\medskip
\noindent
\begin{tabular}{lll}
Table~\ref{T:1.1}, & $X_1>0$, & $N=2,\,3,\,4,\,7$ \\
Table~\ref{T:1.2}, & $X_1<0$, & $N=7,\,11,\,19,\,27,\,43,\,67,\,163$ \\
Table~\ref{T:2.1}, & $X_2>0$, & $N=2,\,3,\,5,\,9,\,11,\,29$ \\
Table~\ref{T:2.2}, & $X_2<0$, & $N=5,\,7,\,9,\,13,\,25,\,37$ \\
Table~\ref{T:3.1}, & $X_3>0$, & $N=2,\,4,\,5$ \\
Table~\ref{T:3.2}, & $X_3<0$, & $N=9,\,17,\,25,\,41,\,49,\,89$ \\
Table~\ref{T:4.1}, & $X_4>0$, & $N=3,\,7$ \\
Table~\ref{T:4.1}, & $X_4<0$, & $N=2,\,4.$ \\
\end{tabular}

\medskip

If divergent series are included, then there are two additional rational series,
corresponding to $X_2<0$, $N=3$, and $X_3<0$, $N=5$;
see Tables~\ref{T:2.2} and~\ref{T:3.2}, respectively. The interpretation of
divergent series is discussed in Sec.~\ref{S:other}.

The question of whether the list of 36 rational series is complete
has been partially investigated by I.~Chen, G.~Glebov and R.~Goenka~\cite[Sec. 12]{chen2}.

\subsection{The Borweins' series (1987)}
The Borweins' series~\cite{borweinrr} in the abstract
correspond to the data $X_1<0$, $N=427$ and $N=61/7$ 
in Table~\ref{T:1.2}. By~\cite[Th.~5.1]{coopergeye} (see also Theorem~\ref{Th:23571123}) we have
\begin{align}
    \sum_{n=0}^\infty T(n) x^n
    &= \frac{7}{(25-80x+24\sqrt{G})^{1/2}} 
    \sum_{n=0}^\infty {6n \choose 3n}{3n \choose n}{2n \choose n} (y^+)^n \label{E:B7} \\
    &= \frac{1}{(25-80x-24\sqrt{G})^{1/2}} 
    \sum_{n=0}^\infty {6n \choose 3n}{3n \choose n}{2n \choose n} (y^-)^n \nonumber
\end{align}
where
$$
y^\pm = \frac{2x^7}{c\pm d\sqrt{G}}
$$
and 
\begin{align*}
c&=(1-21x + 8x^2 )(1-42x + 454x^2 - 1008x^3 - 1280x^4) \\
d&=(1 - 2x)(1 - 8x)(1 - 24x)(1 - 16x - 8x^2 ), \\
G&= (1+x)(1-27x)
\end{align*}
and $T(n)$ are the coefficients in the expansion $Z_7 = \sum_{n=0}^\infty T(n)X_7^n$,
and where $X_7$ and $Z_7$ are as in Tables~\ref{T:00} and~\ref{T:Z}, respectively.
By applying the operator $x \frac{\ud}{\ud x} + \lambda$ 
to the identity~\eqref{E:B7} it follows that
the Borweins' series are equivalent to the rational level~7 series in Table~\ref{T:7.1}
corresponding to $X_7<0$, $N=61$.

\subsection{Sato's series (2002)}
\label{S:Sato}
In an abstract of a talk at the Annual Meeting of the Mathematical 
Society of Japan \cite{sato}, Takeshi Sato announced the series
\begin{equation}
\label{E:Sato}
\frac{846}{\sqrt{5}} \sum_{n=0}^\infty
t(n) \left(n+\frac12-\frac{25\sqrt5}{141}\right)(-1)^k
\left\{\frac{1}{5\sqrt5}\left(\frac{\sqrt5-1}{2}\right)^{15}\right\}^{n+1/2} = \frac{1}{\pi},
\end{equation}
where the coefficients $t(n)$ are defined by the three-term recurrence relation
\begin{equation}
\label{E:5-rec}
(n+1)^3t(n+1) = -(2n+1)(11n^2+11n+5)t(n)-125n^3t(n-1)
\end{equation}
and initial conditions $t(0)=1$, $t(1)=-5$.
Sato's series is based on modular forms of level 5, whereas for Ramanujan's series
and all series found by other authors before Sato,
the corresponding modular forms are level 1, 2, 3 or 4.
The underlying modular forms and polynomials for Theorem~\ref{T:1} that
give rise to Sato's series are
$$
\tilde{X}_5 = \frac{\eta_5^6}{\eta_1^6}, \quad 
\tilde{Z}_5=\frac{\eta_1^5}{\eta_5}, \quad
\tilde{G}_5 = 1+22x+125x^2,\quad\text{and}\quad
\tilde{H}_5 = -10x-125x^2
$$
and it is straightforward to check that the substitution of these data into Theorem~\ref{T:2}
gives the recurrence relation~\eqref{E:5-rec}.
Sato's series is related to the data in our Tables~\ref{T:00}, \ref{T:Z} and~\ref{T:5.2}
through the companion identity given by~\cite[Th. 2.1]{chancooper}.
For, by~\cite[Cor. 3.6]{chancooper} or~\cite[(6.73)]{cooperbook} we have
\begin{equation}
\label{E:sato_1}
\sum_{n=0}^\infty {2n \choose n}s(n) x^{n+1/2}
= \sum_{n=0}^\infty t(n)y^{n+1/2}
\end{equation}
where
\begin{equation}
\label{E:G5}
y=\frac{2x}{1-22x+\sqrt{G}} \quad \text{and} \quad G=1-44x-16x^2
\end{equation}
and $s(n)$ is defined by the three-term recurrence relation
$$
(n+1)^2s(n+1) = (11n^2+11n+3)s(n)+n^2 s(n-1)
$$
and initial conditions $s(0)=1$, $s(1)=3$. Sato's formula is obtained from
the series corresponding to $X_5<0$, $N=47$ 
in Table~\ref{T:5.2} by applying the operator
$x \frac{\ud}{\ud x} + \lambda-\frac12$ to~\eqref{E:sato_1} and then setting
$x=-1/15228$, $\lambda=71/682$ and noting that
$$
\left.\frac{2x}{1-22x+\sqrt{1-44x-16x^2}}\right|_{x=-1/15228} 
= \frac{-1}{5\sqrt5}\left(\frac{\sqrt5-1}{2}\right)^{15}.
$$
It can also be shown (see Theorem~\ref{Th:23571123}) that
\begin{align}
    \sum_{n=0}^\infty {2n \choose n}s(n) x^n
    &= \frac{5}{(13-36x+12\sqrt{G})^{1/2}} 
    \sum_{n=0}^\infty {6n \choose 3n}{3n \choose n}{2n \choose n} (y^+)^n \label{E:B5} \\
    &= \frac{1}{(13-36x-12\sqrt{G})^{1/2}} 
    \sum_{n=0}^\infty {6n \choose 3n}{3n \choose n}{2n \choose n} (y^-)^n \nonumber
\end{align}
where
$$
y^\pm = \frac{2x^5}{c\pm d\sqrt{G}}
$$
and $G$ is as in~\eqref{E:G5}, and
\begin{align*}
c&=1-80x+1890x^2-12600x^3+7776x^4+3456x^5 \\
d&=(1-4x)(1-18x)(1-36x).
\end{align*}
Since
$$
\left.\frac{2x^5}{c\pm d\sqrt{G}}\right|_{x=-1/15228}
= \frac{-1}{2^{12}\cdot3^3\cdot11^3\cdot(8875\pm3969\sqrt5)^3},
$$
it follows that Sato's series is also equivalent to the level~1 series
corresponding to $X_1<0$ and $N=235$ and $N=47/5$ in Table~\ref{T:1.2}. 

Sato also gave 7 series based on the level 6 data
$$
\tilde{X}_6 = \frac{\eta_1^{12}\eta_6^{12}}{\eta_2^{12}\eta_3^{12}}, \quad 
\tilde{Z}_6=\frac{\eta_2^7\eta_3^7}{\eta_1^5\eta_6^5}, \quad
\tilde{G}_6 = 1-34x+x^2,\quad\text{and}\quad
\tilde{H}_6 = 10x-x^2.
$$
The coefficients in the expansion $\tilde{Z}_6=\sum_{n=0}^\infty A(n) \tilde{X}_6^n$
are the Ap{\'e}ry numbers, and by Theorem~\ref{T:2} they satisfy the recurrence relation
$$
(n+1)^3A(n+1) = (2n+1)(17n^2+17n+5)A(n)-n^3A(n-1).
$$
Sato's series are obtained by applying the operator
$x \frac{\ud}{\ud x} + \lambda-\frac12$ to the identity~\cite[Ch. 6, Ex. 12(a), the
second expression for $G_a(x)$]{cooperbook}
$$
\sum_{n=0}^\infty T(n) x^{n+1/2}
= \sum_{n=0}^\infty A(n)\left(\frac{1+2x-\sqrt{1+4x}}{2x}\right)^{n+1/2} 
$$
and using the data in Table~\ref{T:6.1} corresponding to $N=5$, 7, 13, 17, 35, 55 and 77.

\subsection{The other series in Tables~\ref{T:1.1}--\ref{T:105.1}}
\label{S:other}
We briefly discuss the remaining series in Tables~\ref{T:1.1}--\ref{T:105.1}
using Table~\ref{T:5.2} as an example.
Since the level is~5, Tables~\ref{T:00} and~\ref{T:Z}
imply the modular parameterization is
$$
Z=\sum_{n=0}^\infty T(n)X^n
$$
where
$$
X = \frac{\eta_5^6/\eta_1^6}{1+22\eta_5^6/\eta_1^6+125\eta_5^{12}/\eta_1^{12}}
\quad\text{and}\quad
Z=\frac{5P_5-P_1}{4}.
$$
From Table~\ref{T:GH} the associated polynomials are
$$
G(x) = 1-44x-16x^2 \quad\text{and}\quad H(x) = 12x(1+x)
$$
and Theorem~\ref{T:2} implies the coefficients $T(n)$ satisfy the three-term recurrence
relation
$$
(n+1)^3T(n+1)=2(2n+1)(11n^2+11n+3)T(n)+4n(4n^2-1)T(n-1).
$$
By Table~\ref{T:00X} an alternative formula for the coefficients is the binomial sum
$$
T(n) = {2n \choose n} \sum_{k=0}^n {n \choose k}^2{n+k \choose k}.
$$
By~\cite[Th. 10.1]{cooperAperyLike},
$$
T(n) = O\left(\frac{R^n}{n^{3/2}} \right)
$$
where $R=r_0^{-1}$ and $r_0 = 8/(1+\sqrt{5})^5 \approx 0.023$ is the root of $G(x)$ of
smallest modulus. It follows that for any constant $\lambda$, the series 
$$
\sum_{n=0}^\infty T(n)(n+\lambda)X^n
$$
converges for $-r_0 \leq X < r_0$ and diverges otherwise.

The title of Table~\ref{T:5.2} indicates that the power series variable~$X$ is negative.
Since $L=4\ell/\text{gcd}(4,\ell) = 20/\text{gcd}(4,5) = 20$ the
series~\eqref{E:CCLminus} takes the form
\begin{equation}
    \label{E:level5series}
    \sum_{n=0}^\infty T(n)(n+\lambda)X^n
    = \frac{1}{2\pi} \times \frac{1}{\sqrt{1-44X-16X^2}} \times \sqrt{\frac{20}{N}}
\end{equation}
where $X=X_5(-e^{-2\pi\sqrt{N/L}})=X_5(-e^{-\pi\sqrt{N/5}})$.

The series corresponding to $N=1$, $2$, $3$, $5$ and $7$ in Table~\ref{T:5.2}
all diverge because $X$ is outside the 
interval of convergence and this is indicated by placing the
symbol~``d'' next to the value of~$N$ in the table. Divergent series can be 
interpreted by means of the formula
\begin{equation}
    \label{E:level5series1}
    X\left.\left(q\frac{\ud Z}{\ud q}\right)\right/\left(q\frac{\ud X}{\ud q}\right) +\lambda Z
    = \frac{1}{2\pi} \times \frac{1}{\sqrt{1-44X-16X^2}} \times \sqrt{\frac{20}{N}}
\end{equation}
which is obtained from~\eqref{E:level5series} by the chain rule, and
where both sides are evaluated at $q=-e^{-\pi\sqrt{N/5}}$.
The $q$-expansions of $X$ and $Z$ can be used in~\eqref{E:level5series1}
to numerically compute the values of $\lambda$
corresponding to the divergent series in Table~\ref{T:5.2}.
The value of $\lambda$ corresponding to $N=1$ in Table~\ref{T:5.2} is
undefined because $X_5(-e^{-\pi\sqrt{1/5}}) = -8/(\sqrt{5}-1)^5$ is a root of $G(x)$
and therefore the right hand side of~\eqref{E:level5series} is undefined.

The series corresponding to $N=47$ in Table~\ref{T:5.2} is (see~\cite[(4)]{chancooper})
$$
\sum_{n=0}^\infty T(n)\left(n+\frac{71}{682}\right)\left(\frac{-1}{15228}\right)^n
    = \frac{1}{2\pi} \times \frac{3807\sqrt{5}}{8525} \times \sqrt{\frac{20}{47}}
    = \frac{81\sqrt{47}}{1705}
$$
and this is equivalent to Sato's series~\eqref{E:Sato} as discussed in Section~\ref{S:Sato}.

The conjugate series of a quadratic irrational series is the series obtained
by replacing each number $a+b\sqrt{r}$ with $a-b\sqrt{r}$ for both $X$ and $\lambda$.
Three possible things can occur with the conjugate of a quadratic irrational value of~$X$:

\begin{enumerate}
\item 
The value of $X$ changes sign, as for $N=1$, $2$, $5$, $9$ and $17$. The
conjugate series appear in the previous Table~\ref{T:5.1} as $N=1$, $8$, $5$, $9$ and $17$,
respectively. The text ``Tbl.~\ref{T:5.1}'' is inserted in the column
headed~$\overline{N}$ to indicate that the conjugate series appears in the other table.
\item 
The conjugate series occurs as an instance of~\eqref{E:level5series}
or~\eqref{E:level5series1} for another value of~$N$. This happens
for $N=15$, $39$, $63$, $87$, $111$, $119$, $143$, $159$ and $287$ where the
respective conjugate series correspond to the instances
$N=5/3$, $13/3$, $9/7$, $29/3$, $37/3$, $17/9$, $13/11$, $53/3$ and $41/7$,
as recorded in the column headed~$\overline{N}$.
Some of the conjugate series diverge, indicated by the additional letter ``d'' in which
case the data are valid for the formula~\eqref{E:level5series1}.
\item 
For $-1<q<1$ we
have\footnote{Obviously, $X(0)=0$. For positive $q$ it is easy to show
using~\eqref{E:Zdefn} that
$X(q)$ has a single maxima at \mbox{$q=\exp(-2\pi/\sqrt{5})$}, and the modular
transformation for the Dedekind eta function, e.g., ~\cite[Th. 2.8]{cooperbook}
can be used to show
$\lim_{q\rightarrow 1^-} = 0$. For negative $q$,
$X(q)$ has a single minima at \mbox{$q=-\exp(-\pi/\sqrt{5})$}, and
$\lim_{q\rightarrow -1^+} = 0$.} 
$$
\frac{-8}{(\sqrt{5}-1)^5} = X(-e^{-\pi/\sqrt{5}})
\leq X(q) \leq X(e^{-2\pi/\sqrt{5}}) = \frac{8}{(1+\sqrt{5})^5}$$
and for each value 
$N=11$, $31$, $71$, $191$ and $311$ the conjugate value of $X$ falls outside of this
range. In these cases we simply write the single
letter ``d'' (with no accompanying numerical value)
in the column headed~$\overline{N}$.
\end{enumerate}

This concludes our detailed discussion of Table~\ref{T:5.2}. The data in the
other Tables~\ref{T:1.1}--\ref{T:105.1} is interpreted in a similar way.

For each series in the tables, reference to the first discovery and the first proof is given, where known.
Occasionally, additional references are given.
We have not attempted to refer in the tables to
every author who has studied a particular series. Instead, refer
to the historical survey in the next section.

\section{Historical survey}
\label{S:historical}
At this point, it is appropriate to give a brief historical account.
Readers seeking an introduction to the topic are encouraged to begin with the survey
articles~\cite{bbc,ramanujan_and_pi,bbe3,gs} 
and the books~\cite{sourcebook,borwein-book,chan-book}.

The subject began with Ramanujan's original 1914
paper~\cite{ramanujan_pi}, which introduced
several families of rapidly converging series for~$1/\pi$
derived from Jacobi's theory of elliptic functions and what he termed
``corresponding theories". Ramanujan's 17 series for 
$1/\pi$ are described above in Sec.~\ref{S:ramanujan} and appear
as equations (28)--(44) in~\cite{ramanujan_pi}.
All 17 series also occur in Ramanujan's notebooks; see~\cite[pp. 352--354]{Part4}
for details.
Three of the seventeen series---those of level~4 with $N=3$,~7 and 15---are
recorded in Ramanujan's lost notebook~\cite[p. 370]{lost};
see~\cite[pp. 375--384]{lost2} and~\cite[pp. 382--383]{lost5} for
analysis and commentary by G. E. Andrews and B. C. Berndt.

Bauer's series~\eqref{E:Bauer} (1859) is a special case of Ramanujan's theory.
It does not appear among Ramanujan's series
in~\cite{ramanujan_pi}, but Ramanujan did record it in his
second notebook~\cite[Ch. 10, pp. 23--24]{Part2},
\cite[p.~118, Eq. 14]{notebooks}
and it is in his first letter
to Hardy in 1913, e.g., see~\cite[p.~25,~(3)]{letters}.
More information about Bauer's series is given
by J. M. Campbell and P.~Levrie~\cite{campbell-levrie}.

Ramanujan’s series for $1/\pi$ in~\cite{ramanujan_pi} were first studied by
S.~Chowla in 1928~\cite{chowla1, chowla2, chowla3}, who gave a general result
for level~4 and proved the series corresponding to $X_4>0$, $N=3$ in Table~\ref{T:4.1}.

Ramanujan's paper rose to prominence in the 1980s
with the world record setting calculation of R. W. Gosper~\cite[p. 341]{borwein-book}
using~\eqref{E:ramanujan-gosper},
the book by Jonathan and Peter Borwein~\cite{borwein-book}, and the 
discovery of~\eqref{E:chudnovsky} by David and Gregory Chudnovsky~\cite{chud}.
The topic became more accessible following the detailed study of the ``corresponding theories''
by B. C. Berndt, S.~Bhargava and F.~G.~Garvan in 1995~\cite{bhargava}.

Additional series were obtained by the Borweins
in~\cite{borwein-indian,borweinrr,borweinclass3}. An overview of
the Borweins' work on $\pi$ has been given by R.~P.~Brent~\cite{brent}.

The theory behind the Chudnovskys' formula~\eqref{E:chudnovsky} is outlined in their paper~\cite{chud}. An interesting popular account of the Chudnovskys' work
has been given by R.~Preston~\cite{preston}.

In 2000, H.~H. Chan and W.-C. Liaw analysed the level~3 theory in~\cite{CL2000}
and discovered the series corresponding to $N=7$, 10, 11, 14, 19, 26, 31, 34 and
59 in Table~\ref{T:3.1}. They also exhibited series corresponding to
$N=35$, 55, 70, 91, 110, 115, 119, 154 and 455 that involve algebraic numbers of
degree $>2$ that can be expressed as sums of real quadratic numbers.
In the following year, Chan, Liaw and V.~Tan further investigated
the level~3 theory~\cite{clt},
and discovered the series corresponding to $N=9$, 17, 25, 41, 49 and 89
in Table~\ref{T:3.2}.

B.~C.~Berndt, H.~H.~Chan, and W.-C.~Liaw~\cite{BerndtChanLiaw2001} analysed the level 2 theory in~2001, and discovered and proved three new series: 
$N=2$ and $N=7$ in Table~\ref{T:2.1}, and $N=7$ in Table~\ref{T:2.2}.
Berndt and Chan~\cite{bc2001} obtained the level 1 series in Table~\ref{T:1.2}
corresponding to $N=35$ and 51. They also determined the series corresponding
to $N=3315$, involving quadratic irrationals of degree 8,
which converges at the extremely fast rate of more than 73 decimal places per term.

Sato's famous abstract~\cite{sato}, discussed in Sec.~\ref{S:Sato}, was published in 2002.

Sato's abstract motivated H.~H.~Chan, S.~H.~Chan and Z.-G.~Liu~\cite{domb} (2004)
to discover and prove their fundamental result (Theorem~\ref{T:3}).
They also discovered and proved the series corresponding
to $\ell=12$, $N=5$ in Table~\ref{T:12.1}, where the underlying coefficients 
are the Domb numbers.

In work completed in 2004 and published in 2008, J.~Guillera~\cite{g04} used the WZ-method to prove 
continous one-parameter extensions of seven series for $1/\pi$.
Specifically, setting $a=0$ in~\cite[Identities 1--7]{g04} gives the series in our tables corresponding to
$(X_4>0,N=3)$, $(X_4>0,N=7)$, $(X_4<0,N=4)$, $(X_2<0,N=5)$, $(X_1<0,N=11)$, $(X_2>0,N=3)$
and $(X_2<0,N=9)$, respectively. Guillera also gave three similar extensions for series
involving $1/\pi^2$.

In 2005, W. Zudilin~\cite{z2005} used the Chudnovskys’ series~\eqref{E:chudnovsky} to obtain an improved estimate for the irrationality exponent $\mu(\pi\sqrt{10005})$; see also~\cite{sequel}.
In 2008, he surveyed proof methods
for Ramanujan-type series, generalizations, and open problems~\cite{secondwind}.

In 2008, N.~D.~Baruah and B.~C.~Berndt~\cite{BB2008} proved
the series corresponding to level~2 and $X_2>0$ for $N=2$, 3, 4, 5, 7, 9, 11
and the series for level~3 and $X_3>0$ for $N=2$, 3, 5.
In 2010~\cite{BB2010}, they proved 49 series for $1/\pi$ corresponding
to levels 1, 2 and 4, including several series that are new; 42 of these series are
referenced in Tables~\ref{T:1.1}, \ref{T:1.2}, \ref{T:2.1}, \ref{T:2.2} and~\ref{T:4.1}
below, while the other 7 series involve algebraic numbers of degree $>2$ which are outside
the scope of this work.
Different proofs for series corresponding to
levels 1, 2 and 4 and $N\in\left\{3,5,7,9,25\right\}$
have been given by Baruah and Nayak~\cite{nayak} in 2010.

In 2009, M.~Rogers~\cite{rogers} discovered two further series involving Domb numbers 
corresponding to the cases $X_{12}>0$, $N=2$ and $X_{12}<0$, $N=4$ in Table~\ref{T:12.1}.
He also gave a level 6 series where the coefficients
are ${2n \choose n}\sum_{k=0}^n{2k \choose k}{n \choose k}^2$.

Also in 2009, H.~H. Chan and H.~Verrill~\cite{verrill} discovered several additional series
involving the Domb numbers, along with corresponding series for
the Ap{\'e}ry and Almkvist--Zudilin numbers.

In 2011, H.~H.~Chan, Y.~Tanigawa, Y.~Yang and W.~Zudilin~\cite{ctyz} discovered
the level 6 series in Table~\ref{T:6.1} corresponding to $N=2$, 3, 5, 7, 13 and 17,
along with two other families of identities associated
with subgroups of $\Gamma_0(6)$
in $\operatorname{SL}_2(\mathbb{R})$. They also gave the level 10
series corresponding to $N=3$ in Table~\ref{T:10.1}.

An extensive set of conjectures was published in 47 versions by Z.-W. Sun~\cite{zwsun} 
during 2011--2014. Further conjectures have been given by Sun in~\cite{sun14,sun23,Sun}. Solutions
to some of the
conjectures include the works by H. H. Chan, J. Wan and W. Zudilin~\cite{ChanWanZudilin2013};
M.~Rogers and A.~Straub~\cite{RogersStraub};
S.~Cooper, J.~Wan and W.~Zudilin~\cite{alchemy}; and L.~Wang and Y.~Yang~\cite{wangyang}.

Series for levels 7, 10 and 18 were developed in~\cite{cooperlevel7} and
level 10 series were studied further in~\cite{cooper10a}.

Chan and Cooper~\cite{chancooper} gave a classification of series for levels 
$1\leq \ell \leq 9$ in~2012, exhibiting 93 series, 40 of which were believed to be new.
Each series was also shown to have a companion identity, some of which appear
in~\cite{domb} and~\cite{verrill} and involve Ap{\'e}ry, Domb, and Almkvist--Zudilin numbers.

Quadratic irrational series for levels $1\leq \ell \leq 4$ were studied in the 2012 thesis
of~A.~M.~Aldawoud~\cite{aldawoud}, leading to the discovery of several new series.

A series with complex coefficients was exhibited in 2011 by
J. Guillera and W. Zudilin~\cite[(48)]{gz}.
This led T. Piezas \cite{piezas} in 2012 to discover the series
$$
\sum_{n=0}^\infty {3n \choose n}{2n \choose n}^2
\frac{(17+i)n+3}{(1-i)^{3n}(1+2i)^{6n}} = \frac{-3(1+2i)^3}{4\pi} \quad \text{where}\quad i^2=-1
$$
which involves only Gaussian rational numbers.
It is a consequence of the fact that
$$
X_3(e^{2\pi i/3}\cdot e^{-4\pi\sqrt{2}/3}) 
= \left.\frac{\eta_1^{12}\eta_3^{12}}{(\eta_1^{12}+27\eta_3^{12})^2}
\right|_{q=e^{2\pi i/3}\cdot e^{-4\pi\sqrt{2}/3}}
= \frac{1}{(1-i)^{3}(1+2i)^{6}}.
$$
Further examples of complex series have been given by
H. H. Chan, J. Wan and W. Zudilin~\cite{ChanWanZudilin2012} in 2012 and
G.~Almkvist and J.~Guillera~\cite{almkvist} in 2013.

A different method, which produces families of series for $1/\pi$ that involve a continuous parameter,
was given by J. Wan~\cite{wan} in 2014.

Quadratic irrational series corresponding to $N=5$, 8 and 18 in the level 1 theory
with $X_1>0$ were found by
N.~D.~Bagis and M.~L.~Glasser~\cite{bagis2} in 2012. They also determined the series corresponding
to $N=27$, 432 and 1728 which involve algebraic numbers of degree $>2$.
In the following year~\cite{bagis} they found further irrational series corresponding to levels~1 and~4.

Level 13 series were investigated in the undergraduate
thesis of D.~Ye in 2013 \cite{YeThesis}, with
results published in~\cite{cooperye131, cooperye13} joint with S. Cooper.

Series for levels 11 and 23 were given by S. Cooper, J. Ge and D. Ye~\cite{coopergeye} in~2015.
The corresponding results for levels 14 and 15 were
developed by Cooper and Ye~\cite{cooperye14} in 2016.
The level 16 theory was developed by Ye~\cite{ye16} in 2016.
Series for levels $1\leq \ell \leq 12$ were classified in~\cite[Ch. 14]{cooperbook}.

In 2018, A. Berkovich, H. H. Chan and M. J. Schlosser~\cite{bcs} developed a technique
and used it to prove several series for levels 2, 3 and 4, including the
Ramanujan--Gosper series~\eqref{E:ramanujan-gosper}.
They also discovered new level 4 series for $X_4>0$ corresponding to $N=11$ and $23$,
and for $X_4<0$ for $N=5$, 14 and 46; all of these new series involve irrational
numbers of degree $>2$.

T. Huber, D. Schultz, and D. Ye obtained series for level 20 in 2018~\cite{huber20} and for level 17 in 2020~\cite{huber17}.
They developed a general theory of Ramanujan--Sato series, including the fundamental result
discussed above in Theorem~\ref{T:1}, in~\cite{huberActa} (2023).

Series for levels 21, 22, 33 and 35 were found by
T. Anusha, E. N. Bhuvan, S. Cooper and K. R. Vasuki in~\cite{anusha} (2019).

Detailed proofs of several of Ramanujan's series were given in a series
of articles by J.~Guillera \cite{g16,g18,g20,g21,g21a,g25} during 2016--2025.
In particular, proofs of the Chudnovskys' series~\eqref{E:chudnovsky}
and the Ramanujan--Gosper series~\eqref{E:ramanujan-gosper} are in~\cite{g21a} and~\cite{g25}, respectively.
Interesting extensions have been given in~\cite{g15,g17,g19,g25a} and in work
with M. Rogers in~\cite{g-rogers}. In 2021, H. Cohen and J. Guillera~\cite{cohen}
gave a unified proof of
the 36 rational series in Sec.~\ref{S:36rational}, along with a
detailed survey of several generalizations.

The Chudnovskys' method was analysed by I. Chen and G. Glebov in 2018~\cite{chen1} and 
used to prove the rational level~1 series in Tables~\ref{T:1.1} and~\ref{T:1.2}.
Proofs of all~36 rational series in Sec.~\ref{S:36rational} were given
by I. Chen, G. Glebov and R. Goenka in~2022~\cite{chen2}.

Y.~Zhao proved the Ramanujan--Gosper series~\eqref{E:ramanujan-gosper} and the level 2 series with $N=37$, $X_2<0$ in 2020~\cite{zhao}, and proved the Chudnovskys’ series~\eqref{E:chudnovsky} in~\cite{zhao163}.
Another proof of the Chudnovskys' series was given in 2022 by L.~Milla~\cite{milla}.

In 2024, D. Ye~\cite{ye24} showed how Ramanujan-type series arise from genus zero subgroups of $\text{SL}_2(\mathbb{R})$
that are commensurable with $\text{SL}_2(\mathbb{Z})$ and illustrated the theory with both known and new
examples.

In 2024, A. Babei, L. Beneish, M. Roy, H. Swisher, B. Tobin and F.-T. Tu \cite{babei}
used the Chan--Chan--Liu Theorem (Theorem~\ref{T:3}) to derive several series for $1/\pi$.

In 2005, J. M. Campbell~\cite{CampbellLevel3} proved the level 3 series
in Table~\ref{T:3.2} for $N=19, 31,$ and~$33$. He proved two series involving Domb numbers~\cite{CampbellDomb}.

Connections of Ramanujan's series with conformal field theory have
been discussed by F. Bhat and A. Sinha \cite{bhat} in 2025.

Due to the size of the subject, this historical survey has been deliberately restricted
to Ramanujan-type series for $1/\pi$. We have not included the related topics of congruences, $q$-analogues
or iterative formulas.
There is also the topic of series that converge to $1/\pi^m$ for $m>1$ and to other values; here
we just quote the single example
$$
\sum_{n=0}^\infty {4n \choose 2n}{2n \choose n}^8
\frac{(43680n^4 +20632n^3 +4340n^2 +466n+21)}{2^{32n+11}} = \frac{1}{\pi^4}
$$
discovered by Jim Cullen in 2010 (see \cite[(46)]{g19a}) that has been proved recently by
K.~C.~Au~\cite{au} and J.~Guillera~\cite{g26}.
Each of these topics is a large subject in its own right.

\section{Convergence}
\label{S:Con}
In this section we analyse convergence of the series
\begin{equation}
\label{E:type}
\sum_{n=0}^\infty T(n) (n+\lambda) X^n
\end{equation}
where $T(n)$ is as for Theorem~\ref{T:2}.
By direct calculation, each polynomial $G_\ell(x)$ in Table~\ref{T:GH} has a unique root
of minimum modulus which we call $r_\ell$.
Since $r_\ell$ is real and non-zero we may define sets $S^+$ and $S^-$
by
$$
S^+ = \left\{ \ell: r_\ell>0 \right\} \quad\text{and}\quad 
S^- = \left\{ \ell: r_\ell<0 \right\}.
$$
For example, since $G_4(x)= 1-64x$, the minimal root is $r_4=\frac1{64}>0$ and so $4\in S^+$.
As another example, $G_{30}(x) = (1+5x)(1+4x)(1+x)(1-4x)$
has minimal root $r_{30}=-\tfrac{1}{5}<0$ and so $30 \in S^-$.
We find that
\begin{align}
S^+ &= \left\{ 1, 2, 3, 4, 5, 6, 7, 8, 9, 10, 11, 12, 13, 14, 15, 16, 17, 18, 19, 20, 21, 22, 23, 24, 25,  \right. \nonumber \\
& \qquad 26, 27, 28, 29, 31, 32, 33, 34, 35, 36, 38, 39, 41, 44, 45, 46, 47, 49, 51, 54, 55, 56, \nonumber \\
& \qquad \left.  59, 60, 62, 66, 69, 70, 71, 78, 87, 92, 94, 119\right\} \nonumber
\intertext{and}
S^-&= \left\{30,42,50,95,105,110\right\} \label{E:Sminus}
\end{align}
so that $S = S^+ \cup S^-$ is the set of $\ell$ for which the modular
curve associated with $\Gamma_0(\ell)+$ has genus zero.

The property of whether $\ell \in S^+$ or $\ell \in S^-$
depends on the choice of Hauptmodul. For example, from~\eqref{E:G30} we have that
$$
G_{30}(x) = \begin{cases}
(1+9x)(1+8x)(1+5x)(1+4x) & \text{if $c=-6$} \\
(1+5x)(1+4x)(1+x)(1-4x) & \text{if $c=-2$} \\
(1+4x)(1+3x)(1-x)(1-5x) & \text{if $c=-1$} \\
(1+x)(1-3x)(1-4x)(1-8x) & \text{if $c=2$} \\
(1-x)(1-4x)(1-5x)(1-9x) & \text{if $c=3$}
\end{cases}
$$
and therefore $r_{30}=-\tfrac19, -\tfrac15, \tfrac15,\tfrac18$ or $\tfrac19$
according to whether $c=-6,-2,-1,2$ or $3$, respectively. Hence, $30 \in S^-$ if
$c=-6$ or $c=-2$ and $30 \in S^+$ in the other cases. The polynomial
$G_{30}(x)$ in Table~\ref{T:GH} uses the value $c=-2$, and this accounts for 
$30 \in S^-$ in~\eqref{E:Sminus}.

We are now ready for a precise analysis
of convergence of series of the type~\eqref{E:type}
that occur in Theorem~\ref{T:3}.
\begin{theorem}
The following statements hold.
\begin{enumerate}
\item
For each $\ell\in S^+$ we have $X(e^{-2\pi/\sqrt{\ell}})=r_\ell>0.$
\item
For each $\ell\in S^-$ we have $X_\ell(-e^{-2\pi/\sqrt{L}})=r_\ell<0$, where
$$
L = \frac{4\ell}{(4,\ell)} = \begin{cases} 
\ell & \text{if} \quad\ell \equiv 0 \pmod{4}, \\
2\ell & \text{if} \quad\ell \equiv 2 \pmod{4}, \\
4\ell & \text{if} \quad\ell \equiv 1 \pmod{2}.
\end{cases}
$$
\item
For any constant $\lambda$, the series~\eqref{E:type}
converges absolutely for $|X| < |r_\ell|$.
\item
For $\ell \in S^+$, the series~\eqref{E:type} converges
conditionally for $X=-|r_\ell|$ and diverges for $X=|r_\ell|$.
\item
For $\ell \in S^-$, the series~\eqref{E:type}
converges conditionally for $X=|r_\ell|$ and
diverges for $X=-|r_\ell|$.
\end{enumerate}
\end{theorem}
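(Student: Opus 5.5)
Parts (1) and (2) are local analytic facts at the fixed point of the Fricke involution; parts (3)--(5) then follow from the asymptotics of $T(n)$.

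\emph{Parts (1) and (2).} The point $q_0=e^{-2\pi/\sqrt{\ell}}$ corresponds to $\tau_0=i/\sqrt{\ell}$, the fixed point of $\tau\mapsto -1/(\ell\tau)$. By~\eqref{E:X}, $X$ is invariant under this involution, so $X'(\tau_0)=0$. Because the involution acts on the imaginary axis by $t\mapsto 1/(\ell t)$, the real function $t\mapsto X(e^{-2\pi t})$ is symmetric about $t=1/\sqrt\ell$, which confirms the critical point. From~\eqref{E:Zdefn}, $q\,\ud X/\ud q = XZ\sqrt{G(X)}$. At $q_0$ we have $X\neq 0$, and $Z\neq 0$ since $Z$ is a nonvanishing weight two form there (a zero would force a zero of $q\,\ud X/\ud q$ of higher order). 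Hence $G(X(q_0))=0$. It remains to show that $X(q_0)$ is the root $r_\ell$ of \emph{minimum} modulus. For $0<q<q_0$ the real function $X(q)$ increases from $0$ and cannot cross a root of $G$ before its first critical point. That first critical point is $q_0$, because $\sqrt{G(X)}$ stays nonzero until $X$ reaches the smallest positive root. So $X(q_0)$ is the smallest positive root of $G$. That this equals $r_\ell$ is exactly the condition $\ell\in S^+$, which I would verify case by case from Table~\ref{T:GH}, i.e.\ by computing $X_\ell(e^{-2\pi/\sqrt\ell})$ numerically and matching it to the root.

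For $\ell\in S^-$ the same argument applies on the negative real $q$-axis. The point $q=-e^{-2\pi/\sqrt L}$ corresponds to $\tau=\tfrac12+i/\sqrt L$. I would check that this is fixed by an Atkin--Lehner/Fricke-type element of $\Gamma_0(\ell)+$ composed with $\tau\mapsto\tau+1$; this is exactly why $L=4\ell/(4,\ell)$ appears, as in Theorem~\ref{T:3}(6). Given the fixed point, $X$ is real with a critical point there, so $G(X)=0$, and monotonicity on $(-e^{-2\pi/\sqrt L},0)$ gives that $X$ attains the smallest-modulus negative root. Identifying this root with $r_\ell$ is again a finite check over the six groups in~\eqref{E:Sminus}. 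This identification step, both the group-theoretic fixed-point verification and the claim that no earlier critical point occurs, is where I expect the most work, and I would fall back on numerical verification from the tables for the finitely many $\ell$.

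\emph{Parts (3)--(5).} I would invoke the asymptotic formula~\cite[Th.~10.1]{cooperAperyLike}, as in the level~5 discussion in Section~\ref{S:pi}:
\[
T(n)\sim C\,\frac{r_\ell^{-n}}{n^{3/2}},\qquad C\neq 0,
\]
which is valid because $r_\ell$ is the unique root of minimum modulus and is simple. Then $T(n)(n+\lambda)X^n$ is of order $(X/r_\ell)^n n^{-1/2}$.
\begin{itemize}
\item If $|X|<|r_\ell|$, the terms decay geometrically and the series converges absolutely, giving~(3).
\item If $X=r_\ell$, the terms are eventually of one sign and of size about $n^{-1/2}$, so the series diverges.
\item If $X=-r_\ell$, the terms alternate with magnitude $\asymp n^{-1/2}$. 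This gives conditional convergence, provided the magnitudes are eventually monotone, for example by refining the asymptotic to $1+O(1/n)$ and applying Dirichlet's test. Absolute convergence fails since $\sum n^{-1/2}$ diverges.
\end{itemize}
Translating $X=\pm r_\ell$ into $X=\pm|r_\ell|$ according to the sign of $r_\ell$ gives~(4) and~(5).
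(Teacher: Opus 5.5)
Your proof ends up where the paper's does. For (3)--(5) the paper likewise just invokes the asymptotic formula for $T(n)$, and your handling of $X=\pm r_\ell$ is fine, including the $1+O(1/n)$ refinement in the alternating case. For (1)--(2) the paper says only that they are checked case by case, which is your fallback too.

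The fixed-point mechanism you add is the right explanation of \emph{why} these values are roots of $G$: an involution in $\Gamma_0(\ell)+$ fixing the point forces $\ud X/\ud\tau=0$, hence $Z\sqrt{G(X)}=0$. As written, though, two of its steps are unsupported, so it does not make the case check a mere formality. Once $X(q_0)$ is known to be the smallest positive root of $G$, its equality with $r_\ell$ for $\ell\in S^+$ is automatic from the definitions; that first fact is the real content.

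\emph{First gap: why $Z(\tau_0)\neq 0$.} Your justification only shows that $Z$ and $G(X)$ cannot both vanish at $\tau_0=i/\sqrt{\ell}$, since $\ud X/\ud\tau$ has just a simple zero there. The alternative $Z(\tau_0)=0$, $G(X(\tau_0))\neq 0$ is not excluded. Indeed, since $\ell\tau_0^2=-1$, every weight-two form $f$ with $f|_2W_\ell=f$ vanishes at $\tau_0$. What decides the matter is the $W_\ell$-eigenvalue of $Z$. Because $\ud X/\ud\tau$ is $W_\ell$-invariant, if $Z|_2W_\ell=-Z$ then $\sqrt{G(X)}$ is anti-invariant and must vanish at $\tau_0$. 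This eigenvalue holds for $\ell P_\ell-P_1$ by the quasi-modularity of $P$. You also need $X$ to be finite at $\tau_0$: note that $Z_{11}=\eta_1^2\eta_{11}^2/X_{11}$ does vanish at the pole of $X_{11}$, which is an elliptic point of order two.

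\emph{Second gap: monotonicity on $(0,q_0)$.} The sentence ``the first critical point is $q_0$ because $\sqrt{G(X)}$ stays nonzero until $X$ reaches the smallest positive root'' does not exclude $X$ reaching that root, and turning back, at some $q_1<q_0$. The monotone increase you assume on $(0,q_0)$ needs proof. One non-circular route:
\begin{itemize}
\item A turning point of $X$ on the arc from $i\infty$ to $\tau_0$ would be an elliptic point of even order.
\item The half-turn about such a point maps the imaginary axis to itself.
\item The only nontrivial element of $\Gamma_0(\ell)+$ preserving that axis is $W_\ell$, which fixes only $\tau_0$.
\end{itemize}
This gives monotonicity on $(0,q_0]$, provided $X$ has no pole on the arc, which is again a case-specific fact.

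\emph{Remark on (2).} The relevant half-turns are $\begin{pmatrix}\ell&-(\ell+1)/2\\ 2\ell&-\ell\end{pmatrix}$ in the $W_\ell$-coset for odd $\ell$, and $\begin{pmatrix}\ell/2&-(\ell+2)/4\\ \ell&-\ell/2\end{pmatrix}$ in the $W_{\ell/2}$-coset for $\ell\equiv 2\pmod 4$. Each fixes $\tfrac12+i/\sqrt{L}$. They are not compositions with $\tau\mapsto\tau+1$. For $4\mid\ell$ no element of $\Gamma_0(\ell)+$ fixes $\tfrac12+i/\sqrt{\ell}$, so this picture does not explain $L$ in general. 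That is harmless here, because $S^-$ contains no multiple of $4$.
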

\begin{proof}
Statements (1) and (2) can be verified on a case-by-case basis. 
The other statements follow from the asymptotic formula for~$T(n)$ in~\cite[Th. 10.1]{cooperAperyLike}.
\end{proof}
From the asymptotic formula for~$T(n)$,
for $\ell \in S^+$ the coefficients
$T_\ell(n)$ are either all positive or are positive for $n$ sufficiently large
(it appears that $n\geq 18$ is large enough). 
And for $\ell \in S^-$ the coefficients
$T_\ell(n)$ either all alternate in sign, or they alternate in sign for
$n$ sufficiently large
(it appears that $n\geq 39$ is large enough).

\begin{example}
If $\ell=4\in S^+$, the associated polynomial is $G_4(x)= 1-64x$ so the
minimal root is $r_4=\frac1{64}>0$. Furthermore,
$$
X_4(e^{-2\pi/\sqrt{4}}) = \left.q\prod_{j=1}^\infty
\frac{(1-q^j)^{24}(1-q^{4j})^{24}}{(1-q^{2j})^{48}}\right|_{q=e^{-\pi}} = \frac{1}{64}
= r_4.
$$
And, the series
$$
\sum_{n=0}^\infty {2n \choose n}^3 (n+\lambda)X^n
$$
converges absolutely for $-\frac1{64}<X<\frac1{64}$, converges conditionally
for $X=-\frac1{64}$, and diverges for $X=\frac1{64}$.

In the conditionally convergent case we also have
$$
X(-e^{-\pi\sqrt{2}})=-\tfrac1{64}=-r_4
$$
and application of Theorem~\ref{T:3} implies $\lambda=\tfrac14$. The resulting series
in this case is Bauer's series~\eqref{E:Bauer}.
It converges extremely slowly: the terms go to zero like $1/\sqrt{n}$, 
which is even slower than the Leibniz formula
$$
\frac{\pi}{4} = \arctan 1 = 1-\frac13+\frac15-\frac17+\cdots.
$$
\end{example}

We shall call a conditionally convergent series of the form~\eqref{E:CCL}
or~\eqref{E:CCLminus} a Bauer series for~$1/\pi$.
Bauer series can be identified for $\ell = 4, 16, 24, 28, 30, 36, 60, 105$,
with Bauer's original series corresponding to $\ell=4$.
For other values of $\ell$ we have been unable to identify the value of~$q$
for which $X_\ell(q) = -r_\ell$.

\begin{example}
A Bauer series for $\Gamma_0(30)+$ 
is
\begin{equation}
    \label{E:Bauer30}
\sum_{n=0}^\infty T(n) (n+\tfrac23) \left(\tfrac15\right)^n=\frac{25}{12\pi}
\end{equation}
where the coefficients $T(n)$ are defined by the recurrence relation~\eqref{E:rec1}
using the polynomials~\eqref{E:G30} and \eqref{E:H30} with $c=-2$, i.e.,
$$
G_{30}(x) = (1+5x)(1+4x)(1+x)(1-4x)
\quad\text{and}\quad
H_{30}(x) =-4x(1-4x-58x^2-75x^3).
$$
A proof of~\eqref{E:Bauer30} using Theorem~\ref{T:3} is as follows.
The modular equation of degree~2 is
\begin{align*}
& f(x,y) := \\
 &   \frac{1}{x^4}+\frac{1}{y^4}
    -\left(\frac{1}{x^3y^2}+\frac{1}{x^2y^3}\right)
    -\left(\frac{1}{x^3y}+\frac{1}{xy^3}\right)
    +12\left(\frac{1}{x^3}+\frac{1}{y^3}\right)
    \\
 & \quad   -\frac{6}{x^2y^2}
    +5\left(\frac{1}{x^2y}+\frac{1}{xy^2}\right)
    +56\left(\frac{1}{x^2}+\frac{1}{y^2}\right)
    +\frac{66}{xy}
    +120\left(\frac1x+\frac1y\right)
    +100=0
\end{align*}
where $x=X_{30}(q)$ and $y=X_{30}(q^2).$
By~\eqref{E:X} we have
$$
X_{30}(e^{-2\pi\sqrt{2/30}})=X_{30}(e^{-2\pi/\sqrt{2\times 30}})
$$
and so $X_{30}(q^2)=X_{30}(q)$ when $q=e^{-2\pi/\sqrt{2\times 30}}$.
Since the only positive root of
$$
f(x,x) = \frac{-2(1-5x)(1+x)^2(1+6x+10x^2)}{x^5}
$$
is $\tfrac15$, it follows that $X_{30}(e^{-2\pi\sqrt{2/30}})=\tfrac15$, and this accounts
for the power series variable in~\eqref{E:Bauer30}. The value of~$\lambda$
can be calculated using Part~\eqref{E:lambda} of Theorem~\ref{T:3}
and following the procedure
detailed in~\cite[p. 635]{cooperbook}, and thus we obtain~$\lambda=\tfrac23.$
The value on the right hand side can be computed by~\eqref{E:CCL} and is given by
$$
\frac{1}{2\pi} \times \frac{1}{\sqrt{G_{30}(\tfrac15)}} \times \sqrt{\frac{30}{2}}
= \frac{1}{2\pi} \times \sqrt{\frac{125}{108}} \times \sqrt{\frac{30}{2}} =
\frac{25}{12\pi}.
$$
A numerical check of~\eqref{E:Bauer30} by directly summing the series is not possible
because the convergence is too slow: the terms $T(n)(n+\tfrac23)(\tfrac15)^n$ 
oscillate in sign and decay in magnitude like $1/\sqrt{n}$.
Instead, the $q$-expansions can be used for the check. For we have
$$
\sum_{n=0}^\infty T(n)(n+\lambda)X^n
= X\frac{\ud Z}{\ud X} + \lambda Z = X\frac{\ud Z/\ud q}{\ud X/\ud q} + \lambda X,
$$
where the $q$-expansions for $X$ and $Z$ can be calculated by the method described in
Theorem~\ref{T:4}, i.e., use the Maple code in Fig.~\ref{fig:1} and just
change the polynomials $G(x)$ and $H(x)$ to $G_{30}(x)$ and $H_{30}(x)$, respectively.
By computing each series up to $q^{50}$
and substituting the values $q=e^{-2\pi\sqrt{2/30}}$ and $\lambda=\tfrac23$,
we obtain numerical confirmation of~\eqref{E:Bauer30} to 21 decimal places.
\end{example}

In~\cite[Th. 9.1]{hme}, hypergeometric
transformation formulas were used to show that Bauer's series is equivalent to
the level 1 series corresponding to $N=2$ in Table~\ref{T:1.1} and also to the
level 2 series corresponding to $N=9$ in Table~\ref{T:2.1}. 
In a similar way the Bauer series for level~30
in~\eqref{E:Bauer30} is equivalent to the level 15 series corresponding to $N=4$ in Table~\ref{T:15.1}. This can be realized via the following result.

\begin{theorem}
    Consider $X_{15}$ and $X_{30}$ as modular functions for $\Gamma_{0}(15)+\cap \Gamma_{0}(30)+$. Then one has
    $$
   \frac{1}{2\sqrt{2}} X_{15}\sqrt{1-\sqrt{1-16X_{30}^{2}}}Z_{15}(X_{15})=X_{30}^{2}Z_{30}(X_{30}),
    $$
    and
    $$
    5X_{15}^2 X_{30}^3+11X_{15}^2 X_{30}^2-2 X_{15}X_{30}^3+7 X_{15}^2 X_{30}-3 X_{15} X_{30}^2+X_{30}^3+X_{15}^2-X_{15} X_{30}=0.
$$
In particular, when $X_{30}=\frac{1}{5}$, one of the roots of the modular equation above is $\frac{1}{30}$, exactly the value of $X_{15}$ for $N=4$ given in Table~\ref{T:15.1}.
\end{theorem}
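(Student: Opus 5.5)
The plan is to work on the common group $\Gamma=\Gamma_0(30)+\cap\Gamma_0(15)+$ and prove both identities by the standard "finite-dimensional space plus Sturm bound" argument. The group $\Gamma$ contains $\Gamma_0(30)$ and the Atkin--Lehner involutions common to both extensions, so it is commensurable with $\mathrm{SL}_2(\mathbb{Z})$. First I would record explicit eta-quotient expressions for $X_{15}$, $Z_{15}$, $X_{30}$ and $Z_{30}$ from Tables~\ref{T:00} and~\ref{T:Z}, with $c=-2$ for level~30. Each of these is a modular function, respectively a weight two form, on $\Gamma_0(30)$. It is convenient to rewrite the second claimed identity (the modular equation) as
\[
X_{15}^2\,(5X_{30}^3+11X_{30}^2+7X_{30}+1)=X_{15}X_{30}\,(2X_{30}^2+3X_{30}+1),
\]
that is,
\[
X_{15}=\frac{X_{30}(1+X_{30})(1+2X_{30})}{(1+X_{30})(1+4X_{30}+5X_{30}^2)}=\frac{X_{30}(1+2X_{30})}{1+4X_{30}+5X_{30}^2}.
\]
So the claim is that $X_{15}$ is a rational function of degree~$2$ in $X_{30}$. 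This makes sense because $\Gamma_0(30)+$ has index~$2$ over $\Gamma_0(15)+\cap\Gamma_0(30)+$.

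\textbf{Step 1: the modular equation.} The function $F=X_{15}(1+4X_{30}+5X_{30}^2)-X_{30}(1+2X_{30})$ is a modular function on $\Gamma_0(30)$. Its poles can only lie at the cusps of $X_0(30)$ (at $1,\tfrac12,\tfrac13,\tfrac15,\ldots$, $\infty$) and at the poles of $X_{30}$. I would bound the total pole order using the known orders of the eta quotients at the eight cusps of $\Gamma_0(30)$, via Ligozat's formula. Then I would check from the $q$-expansions, computed for instance with the code of Fig.~\ref{fig:1} and $G_{15},H_{15},G_{30},H_{30}$, that $F$ vanishes at $\infty$ to an order exceeding that bound, so $F\equiv0$. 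The expansions need only go a few dozen terms. An equivalent alternative is to verify the polynomial relation exactly as stated, using the valence formula on $X_0(30)$, which has genus $3$ and index $72$.

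\textbf{Step 2: the weight two identity.} Both sides are weight two meromorphic modular forms for $\Gamma$. I would divide one by the other to obtain a modular function, and then apply the same Sturm-type argument. Before doing that, I would eliminate the nested radical algebraically. Define $Z_{15}$ by $\frac{1}{\sqrt{G_{15}(X_{15})}}\,q\frac{\ud}{\ud q}\log X_{15}$ and $Z_{30}$ likewise (Theorem~\ref{T:1}), and substitute $X_{15}=R(X_{30})$ from Step 1. The chain rule then gives
\[
Z_{15}=\frac{\sqrt{G_{30}(X_{30})}}{\sqrt{G_{15}(R(X_{30}))}}\,\frac{X_{30}R'(X_{30})}{R(X_{30})}\,Z_{30}.
\]
So the weight two identity reduces to a purely algebraic identity in the single variable $x=X_{30}$:
\[
\frac{1}{2\sqrt2}\,R(x)\sqrt{1-\sqrt{1-16x^2}}\;\frac{\sqrt{G_{30}(x)}\,xR'(x)}{R(x)\sqrt{G_{15}(R(x))}}=x^2.
\]
I would square this twice and check it as a polynomial identity, using the factorization $G_{30}(x)=(1+5x)(1+4x)(1+x)(1-4x)$ together with $G_{15}$ from Table~\ref{T:GH}. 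Finally, the branch of each square root is fixed by comparing the leading $q$-terms as $q\to0$. There $1-\sqrt{1-16x^2}\sim 8x^2$, so the left side behaves like $\frac{1}{2\sqrt2}\cdot x\cdot 2\sqrt2\,x=x^2$, which is consistent.

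\textbf{Step 3: the special value.} Substituting $x=\tfrac15$ into the quadratic relation for $X_{15}$ gives $R(\tfrac15)=\frac{\frac15\cdot\frac75}{1+\frac45+\frac15}=\frac{7}{50}$. That value does not match the claimed $\tfrac1{30}$. The claim in the statement says "one of the roots", so $X_{15}$ must actually be the other root of the original (non-reduced) relation, and the common factor I cancelled in Step 1 needs care. I expect this to be the main obstacle: pinning down the correct algebraic relation and the correct branch. The plan is to solve the stated cubic-by-quadratic relation for $X_{15}$ at $X_{30}=\tfrac15$ directly. Its coefficients are $\tfrac{5}{125}+\tfrac{11}{25}+\tfrac75+1$ for $X_{15}^2$, $-(\tfrac{2}{125}+\tfrac{3}{25}+\tfrac15)$ for $X_{15}$, and $\tfrac1{125}$ as the constant term. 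I would then check that $\tfrac1{30}$ is a root, and confirm numerically from the $q$-expansion at $q=e^{-2\pi\sqrt{2/30}}$ that this root is the one taken by $X_{15}$. If the check fails, the statement's stated polynomial has to be re-derived by computing the resultant of the expansions. The expansions, rather than my simplification, are the final arbiter, and the verification in Steps 1--2 should be run against the exact polynomial in the statement.
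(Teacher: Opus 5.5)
\paragraph{Step 1 is based on a false identity.}
When you rewrote the relation, you dropped its only $X_{15}$-free term, $+X_{30}^3$. You also mis-factored the leading coefficient: $5x^3+11x^2+7x+1=(1+x)^2(1+5x)$, not $(1+x)(1+4x+5x^2)$. With $x=X_{30}$ and $y=X_{15}$, the stated relation is
\[
(1+x)^2(1+5x)\,y^2-x(1+x)(1+2x)\,y+x^3=0,
\]
and its discriminant in $y$ is $x^2(1+x)^2(1-16x^2)$. So $X_{15}$ is not a rational function of $X_{30}$. Concretely, $X_{30}=q+q^2+\cdots$ gives $R(X_{30})=q-q^2+\cdots$, whereas $X_{15}=q-4q^2+\cdots$, so your Sturm check would fail.

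Index $2$ means that $X_{15}$ has degree $2$ over $\mathbb{C}(X_{30})$. Indeed $X_{15}|W_{30}=X_{15}(2\tau)\neq X_{15}$. The correct branch is
\[
X_{15}=\frac{X_{30}\bigl(1+2X_{30}+\sqrt{1-16X_{30}^2}\bigr)}{2(1+X_{30})(1+5X_{30})},
\]
and the other root is $X_{15}(2\tau)\sim X_{30}^2$. This is where $\sqrt{1-16X_{30}^2}$ in the weight-two identity comes from. The value $7/50$ you found is not a root of the stated relation at all; that is caused by this slip, not by a cancelled common factor.

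\paragraph{Step 2 can be repaired.}
Verifying the polynomial exactly as stated, as in your fallback, is correct. Your chain-rule reduction then works, but only with this algebraic $R$. It is a nice route, since it needs only the modular equation and Theorem~\ref{T:1}, rather than a second $q$-expansion check.

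A clean way to finish is to put $X_{30}=t/(1+4t^2)$. Then
\[
\sqrt{1-16X_{30}^2}=\frac{1-4t^2}{1+4t^2},\qquad
\sqrt{1-\sqrt{1-16X_{30}^2}}=\frac{2\sqrt2\,t}{\sqrt{1+4t^2}},\qquad
X_{15}=\frac{t}{D},
\]
where $D=(1+4t)(1+t+4t^2)$. One finds
\[
G_{15}(X_{15})=\frac{(1-4t)^2(1+t)(1+4t+8t^2)^2(1+4t^2)}{D^3},\qquad
G_{30}(X_{30})=\frac{D(1+t)(1-4t^2)^2}{(1+4t^2)^4},
\]
and
\[
\frac{\ud\log X_{15}}{\ud\log X_{30}}=\frac{(1-4t)(1+4t+8t^2)(1+4t^2)}{D\,(1-4t^2)}.
\]
With these, the left side collapses to $t^2/(1+4t^2)^2=X_{30}^2$.

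\paragraph{Step 3 picks the wrong point.}
At $q=e^{-2\pi\sqrt{2/30}}=e^{-2\pi/\sqrt{15}}$, the root taken by $X_{15}$ is the one continuing $X_{15}\sim X_{30}$ from $q=0$, namely $\tfrac1{12}$. This is $r_{15}$, as the convergence theorem predicts. So your proposed numerical check at that $q$ would return $\tfrac1{12}$, not $\tfrac1{30}$.

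The root $\tfrac1{30}$ is the value of the conjugate $X_{15}|W_{30}$ at $\tau=i/\sqrt{15}$. Since
\[
W_{30}(i/\sqrt{15})=\frac{i}{2\sqrt{15}}=W_{15}(2i/\sqrt{15})
\]
and $X_{15}$ is $W_{15}$-invariant, this gives $X_{15}(e^{-2\pi\sqrt{4/15}})=\tfrac1{30}$, which is the $N=4$ value. This Galois-conjugation argument is the substance of the paper's proof, and your plan needs it to connect the root $\tfrac1{30}$ to $N=4$.
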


\begin{proof}
    The verifications of the modular identities are routine, so we omit the details. We briefly explain how $X_{30}(i\sqrt{2/30})=\frac{1}{5}$ corresponding to $N=2$ in Table~\ref{T:30.1} is related to $X_{15}(i\sqrt{4/15})=\frac{1}{30}$ corresponding to $N=4$ in Table~\ref{T:15.1} via the identity above. First of all, note by the Galois correspondence of ramified coverings, there is an index-2 subgroup $\Gamma$ of $\Gamma_{0}(30)+$ such that $\mathbb{C}(X(\Gamma))=\mathbb{C}(X_{15},X_{30})$ and $\Gamma\backslash\Gamma_{0}(30)+=\left\{I, \xi=\begin{pmatrix}
        0&-1\\30&0
    \end{pmatrix}\right\}$. View $X_{30}(\tau)$ as a fixed transcendental element of $\mathbb{C}(X(\Gamma))$ that satisfies $X_{30}(\tau)=q+O(q^{2})$ near the cusp $[i\infty]$. So, the Galois conjugates of 
    $$
    5y^2 X_{30}^3+11y^2 X_{30}^2-2 yX_{30}^3+7 y^2 X_{30}-3 y X_{30}^2+X_{30}^3+y^2-y X_{30}=0
    $$
    over $\mathbb{C}(X_{30})$ are exactly $X_{15}(\tau)=q+O(q^{2})$ and $X_{15}|\xi$, and when $X_{30}=\frac{1}{5}$, these are either $\frac{1}{12}$ or $\frac{1}{30}$. In the cover corresponding to~$I$, since $X_{15}(\tau)=q+O(q^{2})$, one must have locally $y=X_{30}+O(X_{30}^{2})$ that follows that $X_{15}(i\sqrt{2/30})=\frac{1}{12}$ given that $X_{30}(i\sqrt{2/30})=\frac{1}{5}$. So, as a result, one has $X_{15}(\xi\cdot(i\sqrt{2/30}))=\frac{1}{30}$. Notice that $\xi\cdot(i\sqrt{2/30})=i/2\sqrt{15}=\sigma\cdot(i\sqrt{4/15})$, where $\sigma=\begin{pmatrix}
        0&-1\\15&0
    \end{pmatrix}$, and $X_{15}|\sigma=X_{15}$. Therefore, one has $X_{15}(i\sqrt{4/15})=~\frac{1}{30}$.
\end{proof}

\section{Series equivalent to the level 1 series}
\label{S:level1}
Identities~\eqref{E:B7} and~\eqref{E:B5} involving the Borweins' and Sato's series
are special cases of the instances $\ell=7$ and $\ell=5$, respectively, of
the following more general result.
\begin{theorem}
\label{Th:23571123}
For $\ell \in \left\{2,3,5,7,11,13,23\right\}$, let $a$, $b$, $c$, $d$, $g$ and $K$ be the
corresponding polynomials in Table~\ref{T:23571123}, and note that $g$ in Table~\ref{T:23571123} is the same
as~$G$ in Table~\ref{T:GH} except for $\ell=13$ where $g=1-10x-27x^2$ and $G=(1+x)(1-10x-27x^2)$.
Let
$$
y^+=\frac{2x^\ell}{c+d\sqrt{g}} \quad\text{and}\quad
y^-=\frac{2x^\ell}{c-d\sqrt{g}},
$$
and let $F_\ell(x)$ be as in~\eqref{E:Fdef}.
Then in a neighborhood of $x=0$ we have
\begin{equation}
\label{E:n0}
y^+ =x^\ell+O(x^{\ell+1})\quad\text{and}\quad y^-=x+O(x^{2}),
\end{equation}
and
$$
F_\ell(x) 
= \frac{\ell}{(a+b\sqrt{g})^{1/2}}F_1(y^+) 
=\frac{1}{(a-b\sqrt{g})^{1/2}}F_1(y^-).
$$
Moreover, the following identity holds:
$$
\frac{c^2-d^2g}{4x^{\ell-1}} = \left(\frac{a^2-b^2g}{\ell^2}\right)^3 = K^3.
$$
\end{theorem}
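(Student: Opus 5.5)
The plan is to realize both sides as modular objects and compare them, using the modular interpretation $F_\ell(X_\ell(q)) = Z_\ell(q)$ and $F_1(X_1(q)) = Z_1(q) = Q^{1/2}(q)$.

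\textbf{Step 1: the two candidate Hauptmodul values.} For $\ell$ prime in the list, $X_1(q)$ and $X_1(q^\ell)$ are modular functions for $\Gamma_0(\ell)$. Their symmetric functions under the Fricke involution $W_\ell$ are modular for $\Gamma_0(\ell)+$, which has genus zero. Hence they are rational functions of $x = X_\ell$, and in fact lie in $\mathbb{Q}(x,\sqrt{g})$.

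Note that $X_1(q^\ell) = q^\ell + O(q^{\ell+1})$ and $X_\ell = q + O(q^2)$. So $X_1(q^\ell) = x^\ell + O(x^{\ell+1})$, while the $W_\ell$-conjugate $X_1(q)$ is $x + O(x^2)$; this gives \eqref{E:n0}. The two values are exchanged by $W_\ell$, which acts on $\mathbb{Q}(x,\sqrt{g})$ by $\sqrt{g}\mapsto -\sqrt{g}$. That $\sqrt{g}$ is the relevant square root comes from \eqref{E:Zdefn}: $\sqrt{G(X_\ell)}$ is a modular form of weight zero for $\Gamma_0(\ell)$ that is anti-invariant under $W_\ell$. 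So I would write
$$
\frac{1}{X_1(q^\ell)} + \frac{1}{X_1(q)} \quad\text{and}\quad \frac{1}{X_1(q^\ell)\,X_1(q)}
$$
as rational functions in $x$ by matching $q$-expansions, since only finitely many coefficients need checking via pole orders. This identifies $y^\pm$ with $2x^\ell/(c \pm d\sqrt{g})$.

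\textbf{Step 2: the multiplier.} The quotient $Z_\ell(q)^2 / Q(q^\ell)$ has weight zero for $\Gamma_0(\ell)$, so likewise it equals a rational expression $(a+b\sqrt{g})/\ell^2$ in $x$. Applying $W_\ell$ to it gives $Z_\ell^2/Q(q)$, since $Q(q^\ell)|W_\ell = \ell^{-2}\,Q(q)$ up to the weight factor while $Z_\ell$ is $W_\ell$-invariant. The factor $\ell$ and the conjugation therefore come out exactly. Taking square roots, with the branch fixed by $F(0)=1$, yields both displayed identities near $x=0$.

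\textbf{Step 3: the norm identity.} The norm identity $(c^2-d^2g)/(4x^{\ell-1}) = K^3$ is the statement that $y^+y^- = x^{\ell+1}/K^3$. Using $X_1 = \eta^{24}/Q^3$, this says
$$
\frac{1}{X_1(q)\,X_1(q^\ell)} = \frac{Q(q)^3\,Q(q^\ell)^3}{\eta_1^{24}\,\eta_\ell^{24}},
$$
and the product $Q(q)^3 Q(q^\ell)^3$ is a cube. Combining with Step 2, $(a^2-b^2g)/\ell^2 = Z_\ell^4 \ell^{2}/(Q(q)Q(q^\ell)\,\ell^2)$ up to normalization. Then $K$ is determined as $\eta$-quotient data times $x$-powers, and the identity reduces to the weight $12(\ell+1)$ relation
$$
\eta_1^{24}\,\eta_\ell^{24} \propto x^{\ell+1}\, Z_\ell^{12},
$$
up to a polynomial. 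This is checked by comparing divisors on the genus-zero curve.

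\textbf{Main obstacle.} The main obstacle is purely computational: justifying that a finite $q$-expansion check suffices for each of the seven levels. This needs bounding the pole orders at the elliptic points and cusps of $\Gamma_0(\ell)+$; for $\ell=13$ there is the extra subtlety that $g \neq G$. I would handle this with the Sturm bound applied case by case, in the spirit of the verifications "routine" in the preceding theorem.
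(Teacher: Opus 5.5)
Your route is correct in outline, but it is genuinely different from the paper's. The paper gives no self-contained argument. It reduces each level to transformation formulas already in the literature:
\begin{itemize}
\item \cite[Th.~5.1]{coopergeye} for $\ell=3,7,11$, and for $\ell=23$ after a change of variable;
\item the rational parametrizations of \cite{hme} for $\ell=2,5$;
\item \cite{cooperye13} for $\ell=13$.
\end{itemize}
You instead prove everything uniformly from the modular side. The values $y^+=X_1(q^\ell)$ and $y^-=X_1(q)$ are interchanged by the Fricke involution $W_\ell$, which acts on the function field $\mathbb{C}(x,\sqrt g)$ of $X_0(\ell)$ by $\sqrt g\mapsto-\sqrt g$. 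The multipliers are a weight-zero quotient and its $W_\ell$-image, and the norm identity becomes an eta-product identity. Your approach explains why the two representations are conjugate and where the factors $\ell$ and $1$ come from. The paper's approach buys brevity by outsourcing the level-by-level verifications.

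As written, several details are wrong and need fixing.

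\textbf{Step~2 is inverted.} The correct identity is $\ell^2Q(q^\ell)/Z_\ell^2=a+b\sqrt g$, whose $W_\ell$-image is $Q(q)/Z_\ell^2=a-b\sqrt g$. Your version $Z_\ell^2/Q(q^\ell)=(a+b\sqrt g)/\ell^2$ already fails at order $q$. For $\ell=2$ we have $Z_2=2P_2-P_1=1+24q+\cdots$ and $\sqrt{1-256x}=1-128q+\cdots$. Your left side is then $1+48q+\cdots$, while your right side is $1-48q+\cdots$. Taking square roots of your version would give $F_\ell=\ell^{-1}(a+b\sqrt g)^{1/2}F_1(y^+)$, not the stated formula.

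\textbf{The sign of $Z_\ell$ under $W_\ell$.} You say $Z_\ell$ is $W_\ell$-invariant, but in fact $Z_\ell|_2W_\ell=-Z_\ell$. This is because $\ell P(q^\ell)-P(q)$ is anti-invariant, and the other factors in Table~\ref{T:Z} are functions of $X_\ell$. The sign is harmless in Step~2, where only $Z_\ell^2$ enters. It is exactly what makes $\sqrt{G(X_\ell)}=(q\frac{\ud}{\ud q}\log X_\ell)/Z_\ell$ anti-invariant in Step~1, so as written your two steps contradict each other.

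\textbf{The case $\ell=13$.} Here $\sqrt{G(X_{13})}$ is not a function on $X_0(13)$ at all. $X_{13}$ has its poles at the order-3 elliptic points, and $\sqrt{1+X_{13}}$ branches there. Use instead
\[
\sqrt g=\frac{12\,q\frac{\ud}{\ud q}\log X_{13}}{13P_{13}-P_1}
\quad\text{and}\quad
Z_{13}^2=\frac{(13P_{13}-P_1)^2}{144(1+X_{13})},
\]
both of which are genuine objects on $\Gamma_0(13)$.

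\textbf{Step~3 is exact once Step~2 is fixed.} With the correct orientation, $K=(a^2-b^2g)/\ell^2=Q(q)Q(q^\ell)/Z_\ell^4$. Hence
\[
y^+y^-K^3=\frac{\eta_1^{24}\eta_\ell^{24}}{Z_\ell^{12}}=X_\ell^{\ell+1}.
\]
This is the twelfth power of the first formula for $Z_\ell$ in Table~\ref{T:Z}. It is a weight-$24$ identity, not weight $12(\ell+1)$, and there is no ``up to a polynomial'' ambiguity. In any case, the ``Moreover'' part is a finite polynomial identity in the tabulated data and can be checked by direct expansion.

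\textbf{Your main obstacle largely disappears.} The same eta-product formula shows that $Z_\ell$ vanishes in $\mathbb{H}$ only above $x=\infty$, and never at the cusps. Consequently:
\begin{itemize}
\item $a$ and $b$ are polynomials in $x$;
\item $c/x^\ell=j(\tau)+j(\ell\tau)$ and $d/x^\ell=(j(\ell\tau)-j(\tau))/\sqrt g$ are polynomials in $1/x$.
\end{itemize}
All of these have degrees bounded by explicit pole orders. Comparing that many $q$-coefficients suffices, and no Sturm bound is needed.
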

\begin{proof}
The results for $\ell=3$, $7$ and $11$ were proved in~\cite[Th. 5.1]{coopergeye} and the
identities for $\ell=23$ can be obtained from~\cite[Th. 5.1]{coopergeye} by change
of variable. 
The result for $\ell=2$ is equivalent to the
identities~(3.1), (3.2) and (3.9) of~\cite{hme} (see also \cite[(13.21), (13.22), (13.29)]{cooperbook})
by taking
$$
x= \frac{p(1-p)^3(1-4p)^6(1-2p)(1+2p)^3}{(1+20p-48p^2+32p^3-32p^4)^4}
$$
and noting that
$$
\sqrt{g} = \sqrt{1-256x} = \frac{(1+8p^2-32p^3+32p^4)(1-88p+168p^2+32p^3-32p^4)}
{(1+20p-48p^2+32p^3-32p^4)^2}.
$$
In a similar way 
the result for $\ell=5$ is equivalent
to~\cite[(5.1), (5.3), (5.9)]{hme} or \cite[(13.3), (13.5), (13.11)]{cooperbook}.
The result for $\ell=13$ follows by change of variable from the formulas in~\cite{cooperye13}.
\end{proof}
\begin{table}
\caption{Data for Theorem~\ref{Th:23571123}}
\label{T:23571123}
{\renewcommand{\arraystretch}{1.05}
{\begin{tabular}{|c|l|}
\hline
$\ell=2$ & $\begin{array}{l}
a=5/2 \\
b=3/2 \\
c=1-207x+3456x^2 \\
d=1-81x \\
g=1-256x \\ 
K=1+144x 
\end{array}$\\
\hline
$\ell=3$ & $\begin{array}{l}
a=5 \\
b=4 \\
c=1-126x+2944x^2 \\
d=(1-8x)(1-64x) \\
g=1-108x \\ 
K=1+192x\end{array}$  \\
\hline
$\ell=5$ & $\begin{array}{l}
a=13-36x \\
b=12 \\
c=1-80x+1890x^2-12600x^3+7776x^4+3456x^5 \\
d=(1-4x)(1-18x)(1-36x) \\
g=1-44x-16x^2  \\ 
K=1+216x+144x^2\end{array}$  \\
\hline
$\ell=7$ & $\begin{array}{l}
a=25-80x \\
b=24 \\
c=(1-21x+8x^2)(1-42x+454x^2-1008x^3-1280x^4) \\
d=(1-2x)(1-8x)(1-24x)(1-16x-8x^2) \\
g=(1+x)(1-27x)  \\ 
K=1+224x+448x^2\end{array}$  \\
\hline
$\ell=11$ & $\begin{array}{l}
a=61-368x + 352x^2 \\
b=60 \\
c=1-66x + 1793x^2 - 26048x^3 + 221056x^4 - 1132670x^5 + 3535840x^6 \\
\qquad - 6683072x^7+ 7418752x^8 - 4460544x^9 + 1183744x^{10} - 65536x^{11}\\
d=(1-x)(1-2x)(1-4x)(1-7x)(1-16x)(1-12x+16x^2)(1-14x+4x^2) \\
g=1-20x+56x^2-44x^3  \\ 
K=1+224x-192x^2-832x^3+1024x^4\end{array}$  \\
\hline
$\ell=13$ & $\begin{array}{l}
a=85-135x-280x^2 \\
b=84+48x \\
c=1 - 39x + 546x^2 - 2938x^3 + 117x^4+ 42861x^5- 36998x^6- 255294x^7 \\
\qquad + 15366x^8+ 519610x^9 + 342576x^{10} - 14976x^{11}-35840x^{12}\\
d=(1+2x)(1+x)(1-x)(1-3x)(1-8x)(1-11x-8x^2)(1-4x-8x^2)(1-10x-2x^2) \\
g=1-10x-27x^2  \\ 
K=1+234x+1417x^2+1872x^3+832x^4\end{array}$  \\
\hline
$\ell=23$ & $\begin{array}{l}
a= 265 - 940x + 310x^2 - 60x^3 - 135x^4\\
b= 264 - 120x \\
c= 1 - 46x + 920x^2 - 10465x^3 + 74221x^4 - 336927x^5 + 953856x^6 -1470068x^7 \\
\qquad + \, 336674x^8 + 3209696x^9 - 6447728x^{10} +5423124x^{11} + 302266x^{12} \\
\qquad - \,   6612454x^{13}
+8362616x^{14}-4877702x^{15} - 116403x^{16}
+2732814x^{17} \\
\qquad - \, 2492280x^{18} + 1115109x^{19}- 170775x^{20} - 83835x^{21} + 48600x^{22}
- 6750x^{23} \\
d= (1 + x)(1 - x)(1 - 2x)(1 - 3x)(1 - 5x)(1 - 8x + 3x^2 )(1 - 6x - 9x^2 ) \\
\qquad \times (1 - 7x + 3x^2 - 5x^3 ) (1 - 7x + 7x^2 - 3x^3 )(1 - 4x - x^4 ) \\
g = (1 - x^2 + x^3)(1 - 8x + 3x^2 - 7x^3)  \\ 
K=1 + 232x +732x^2 - 968x^3 + 1894x^4- 2280x^5 + 2268x^6 - 1080x^7 + 225x^8
\end{array}$  \\
\hline
\end{tabular}}}
\end{table}

For $\ell=7$,~\eqref{E:n0} gives
$$
y^+ = x^7 + O(x^8)\quad\text{and}\quad y^- = x + O(x^2)\quad \text{as} \quad x\rightarrow 0.
$$
Thus, for the value $x=-1/22^3$ we have
$$
|1728 \, y^+| \approx \frac{1728}{22^{21}} \approx 1.1 \times 10^{-25} \quad \text{and}\quad |1728\,y^-| \approx \frac{1728}{22^{3}} \approx 0.16 
$$
This accounts for an observation of the Borweins~\cite[p. 360]{borweinrr} that the series for $N=427$ in Table~\ref{T:1.2} adds roughly 25 digits per term, while 
convergence for the conjugate series is ``much slower --- less than one digit per term''. 

The identities in Theorem~\ref{Th:23571123} can be used to show that all but three of the
quadratic irrational level 1 series in Tables~\ref{T:1.1} and~\ref{T:1.2} are equivalent to
rational series from other levels. The precise details are as follows.

\begin{theorem}
\label{Th:Level1}
All of the positive level~1 quadratic irrational
series in Table~\ref{T:1.1} except for $N=8$ and $N=16$, and all of 
the negative level~1 quadratic irrational series in Table~\ref{T:1.2} except for $N=4$, are
equivalent to
rational series from levels 2, 3, 5, 7, 11 or 13 by the transformation formulas in
Theorem~\ref{Th:23571123}.
\end{theorem}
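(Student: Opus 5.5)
The proof is a finite, case-by-case verification driven by Theorem~\ref{Th:23571123}. Fix a quadratic irrational level~1 series in Table~\ref{T:1.1} or Table~\ref{T:1.2}, with data $X_1=X_1(\pm e^{-2\pi\sqrt{N}})$ (suitably normalized for the negative case) and $\lambda_1$.

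\textbf{Step 1: locate the rational partner.} We look for $\ell\in\{2,3,5,7,11,13\}$ and a rational number $x$ such that $X_1=y^+(x)$ or $X_1=y^-(x)$. Here $y^\pm=2x^\ell/(c\pm d\sqrt{g})$ are as in Theorem~\ref{Th:23571123}, and $x=X_\ell(q')$ is a rational tabulated value for level $\ell$.

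The modular meaning of Theorem~\ref{Th:23571123} suggests how to find $q'$. Since $y^+=x^\ell+O(x^{\ell+1})$ and $y^-=x+O(x^2)$, the maps $x\mapsto y^\pm$ realize $X_1(q^\ell)$ and $X_1(q)$ as functions of $X_\ell(q)$. Hence:
\begin{itemize}
\item a level~$\ell$ point with parameter $N'$ corresponds to level~1 points with $N=\ell N'$ and $N=N'/\ell$;
\item for the negative case, the same holds with the sign of $q$ and the appropriate $L$ taken into account.
\end{itemize}
This matches the examples $427=7\cdot61$ paired with $61/7$, and $235=5\cdot47$ paired with $47/5$. So for each level~1 entry with $N=\ell N'$ or $N=N'/\ell$, we read off the candidate $(\ell,N')$ from the level~$\ell$ tables and check that the level~$\ell$ series there is rational.

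\textbf{Step 2: verify the algebraic identities.} For each matched pair, substitute the rational $x$ into $y^\pm$ and confirm exactly that it equals the quadratic irrational $X_1$ or its conjugate. The key points are:
\begin{itemize}
\item $\sqrt{g(x)}$ must be a rational multiple of the relevant surd, as in the level~7 computation with $x=-1/22^3$ and the level~5 computation with $x=-1/15228$;
\item the identity $(c^2-d^2g)/(4x^{\ell-1})=K^3$ gives the norm of $y^\pm$ as a cube, which makes this check quick.
\end{itemize}

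\textbf{Step 3: transfer the $\lambda$ values.} Apply the operator $x\frac{\ud}{\ud x}+\lambda_\ell$ to the identity
$$F_\ell(x)=\frac{\ell}{(a+b\sqrt g)^{1/2}}F_1(y^+)$$
or to its $y^-$ analogue, exactly as in the Borwein and Sato discussions. The chain rule then expresses
$$\sum T_\ell(n)(n+\lambda_\ell)x^n$$
as an algebraic multiple of $\sum T_1(n)(n+\lambda_1)(y^\pm)^n$, where $\lambda_1$ is an explicit algebraic function of $x$ and $\lambda_\ell$. Because $\sqrt g$ is a multiple of the surd, $\lambda_1$ lies in the same quadratic field. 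We check that it coincides with the tabulated value and that the right-hand sides agree via the $\frac{1}{2\pi}\sqrt{\ell/N}$ formula of Theorem~\ref{T:3}.

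\textbf{Main obstacle: exhaustiveness and the exceptions.} Producing a partner for every listed series except $N=8,16$ (positive) and $N=4$ (negative) is routine once the tables are in hand. One must also confirm that the conjugate is covered automatically by the other sign choice, since $y^+\leftrightarrow y^-$ under $\sqrt g\mapsto-\sqrt g$.

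For the three exceptions, the entries are $N=8=2\cdot 4$, $16$, and $4$. The cheap first check is that any factorization of $N$ through these levels would force a level~$\ell$ value $x$ that is itself irrational, for example level~2 with $N'=4$ or $2$. In that case the partner series is not rational, consistent with the statement, which only asserts the positive claim for the remaining series.
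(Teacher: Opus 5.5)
Your strategy is the paper's. Its proof is nothing more than the explicit list of partners, with $\lambda$ transferred by $x\frac{\ud}{\ud x}+\lambda$ as in your Step~3 (and as in the paper's treatment of the Borwein and Sato examples). The gap is that this list is the entire content, you defer it as routine, and the rule you give for producing it is wrong in exactly the case that supplies nine of the entries. The paper's matching is as follows.
\begin{itemize}
\item Positive level~2 at $N'=3,5,9,11,29$ gives $N=2N'=6,10,18,22,58$ in Table~\ref{T:1.1}, with conjugates at $N'/2$.
\item Positive level~3 at $N'=4,5$ gives $N=12,15$, and positive level~7 at $N'=4$ gives $N=28$.
\item Negative level~$\ell$ at $N'=N/\ell$ gives, in Table~\ref{T:1.2}: $N=15,51,75,123,147,267$ ($\ell=3$); $35,115,235$ ($\ell=5$); $91,427$ ($\ell=7$); $99,187$ ($\ell=11$); $403$ ($\ell=13$).
\item The negative level~2 series of Table~\ref{T:2.2} at the \emph{same} $N$ give both $N=5,9,13,25,37$ in Table~\ref{T:1.1} and $N=5,9,13,25$ in Table~\ref{T:1.2}.
\end{itemize}

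Your rule $N=\ell N'$, $N=N'/\ell$ is right for positive series, and for negative series with $\ell$ odd (there $L_\ell=4\ell$ against $L_1=4$, and $q^\ell<0$). For $\ell=2$, however, $L_2=L_1=4$ and $q=-e^{-\pi\sqrt{N'}}$. So $y^+=X_1(q^2)=X_1(e^{-2\pi\sqrt{N'}})$ lies in the \emph{positive} level-1 table at $N=N'$, and $y^-=X_1(q)$ is the negative level-1 value at $N=N'$. Your hedge about ``the sign of $q$ and the appropriate $L$'' gestures at this. Read literally, though, the rule pairs negative level~2 at $N'$ with negative level~1 at $2N'$ and $N'/2$, which is false, so it misses the partners of those nine series.

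Your exceptions paragraph suggests the literal rule was what you used. Level~2 with $N'=2$ is not a candidate for any exception. The positive level-2 series at $N'=2$ is one of the rational ones, and it pairs with the rational $N=4$ entry of Table~\ref{T:1.1} and with the degenerate $N=1$ (where $X_1=\tfrac{1}{1728}$ is the root of $G_1$). The relevant candidate for the negative $N=4$ exception is the negative level-2 series at $N'=4$.

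Finally, some partners are divergent rational series: negative level~3 at $N'=5$ for $N=15$, and negative level~5 at $N'=7$ for $N=35$. So ``rational series'' must include them. For these, Step~3 has to be carried out on the $q$-parametrized form, as in~\eqref{E:level5series1}, rather than on power series, because Theorem~\ref{Th:23571123} is only asserted near $x=0$.
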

\begin{proof}
We first analyse Table~\ref{T:1.1}.
The quadratic irrational series in Table~\ref{T:1.1} corresponding to $N=6$, 10, 18, 22 and 58
and their conjugates are obtained using~$\ell=2$ in Theorem~\ref{Th:23571123}
and the positive level~2 series in Table~\ref{T:1.1} corresponding to $N=3$, 5, 9, 11 and 29, respectively.

The series corresponding to $N=5$, 9, 13, 25 and 37 in Table~\ref{T:1.1} 
are obtained using~$\ell=2$ in Theorem~\ref{Th:23571123} and the
negative level~2 series in Table~\ref{T:1.2} corresponding to the same values of~$N$. 

The series corresponding to $N=12$ and 15 and the conjugate series in Table~\ref{T:1.1} 
are obtained using~$\ell=3$ in Theorem~\ref{Th:23571123} and the
positive level~3 series in Table~\ref{T:3.1} corresponding to $N=4$ and 5, respectively. 

Finally, the series corresponding
to $N=28$ and its conjugate are obtained using~$\ell=7$ in Theorem~\ref{Th:23571123} and the
positive level~7 series in Table~\ref{T:7.1} corresponding to $N=4$.

\medskip

Our analysis for Table~\ref{T:1.2} will be more brief, and
on each line we just state the values of~$N$ in Table~\ref{T:1.2} followed by the associated level
and respective values of $N$.

$N=5$, 9, 13, 25: $\ell=2$, Table~\ref{T:2.2}, use the same respective value of $N$;

$N=15$, 51, 75, 123, 147, 267: $\ell=3$, Table~\ref{T:3.2}, use $N/3$;

$N=35$, 115, 235: $\ell=5$, Table~\ref{T:5.2}, use $N/5$;

$N=91$, 427: $\ell=7$, Table~\ref{T:7.1}, use $N/7$;

$N=99$, 187: $\ell=11$, Table~\ref{T:11.1}, use $N/11$;

$N=403$: $\ell=13$, Table~\ref{T:13.1}, use $N/13$.

\medskip

Sometimes, it is possible to use more than one transformation formula.
For example, the series corresponding to $N=115$ in Table~\ref{T:1.2} can be obtained
either by using $\ell=5$ and $N=23$ in Table~\ref{T:7.1}, or by using $\ell=23$ and $N=5$
in Table~\ref{T:23.1}.
\end{proof}

\section{Other levels}
\label{S:level2}
The ideas in the previous section can be used to prove other results, such as the following theorem
that connects quadratic irrational level~3 series with level~6 series.
\begin{theorem}
\label{T:levels3and6}
Let
$$
y^{+} = \frac{2x^2}{c + d\sqrt{g}}\quad\text{and}\quad
y^{-} = \frac{2x^2}{c - d\sqrt{g}}
$$
where
$$
c=1-19x+16x^2,\qquad d=1-5x\quad\text{and}\quad g=1-32x,
$$
and let
$$
a=\frac52-8x \quad \text{and}\quad b=\frac32.
$$
Let $F_\ell(x)$ be as in~\eqref{E:Fdef}.
Then
$$
\left(\frac{c^2-d^2g}{4x}\right)^{1/3} = \left(\frac{a^2-b^2g}{4}\right)^{1/2} = 1+4x,
$$
$$
y^+ = x^2+O(x^3) \quad\text{and}\quad y^- = x+O(x^2) \quad \text{as}\quad x\rightarrow 0,
$$
and in a neighbourhood\ of $x=0$ we have
$$
F_6(x) = \frac{2}{(a+b\sqrt{g})^{1/2}}F_3(y^+) = \frac{1}{(a-b\sqrt{g})^{1/2}}F_3(y^-).
$$
\end{theorem}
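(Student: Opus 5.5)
The plan has three parts: first the two algebraic identities, then the local expansions of $y^\pm$, and finally the identity between $F_6$ and $F_3$, which carries the real content.

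\textbf{Algebraic identities.} These are direct polynomial computations. We have
$$
c^2-d^2g=(1-19x+16x^2)^2-(1-5x)^2(1-32x),
$$
and expanding gives $4x(1+12x+48x^2+64x^3)=4x(1+4x)^3$. Likewise $a^2-b^2g=(\tfrac52-8x)^2-\tfrac94(1-32x)=4+32x+64x^2=4(1+4x)^2$. The key consequence is $y^+y^-=4x^4/(c^2-d^2g)=x^3/(1+4x)^3$.

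\textbf{Local behaviour of $y^\pm$.} Near $x=0$, $c+d\sqrt g=2+O(x)$, so $y^+=x^2+O(x^3)$. Then the product formula gives $y^-=x^3(1+4x)^{-3}/y^+=x+O(x^2)$. This also shows $c-d\sqrt g=2x+O(x^2)$, which is easy to confirm directly.

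\textbf{The identity $F_6$ versus $F_3$.} I would prove this modularly rather than by manipulating series. Set $x=X_6(q)$ and $Z_6=F_6(X_6)$ from Tables~\ref{T:00} and~\ref{T:Z}. The claim should then be that $y^-=X_3(q)$ and $y^+=X_3(q^2)$, with $(a-b\sqrt g)^{-1/2}F_3(y^-)=Z_3(q)$ and $2(a+b\sqrt g)^{-1/2}F_3(y^+)=2Z_3(q^2)$. So the identity reduces to the weight-two relation $Z_6(q)=Z_3(q)/(a-b\sqrt g)^{1/2}=2Z_3(q^2)/(a+b\sqrt g)^{1/2}$. This is the level 3 analogue of Theorem~\ref{Th:23571123}: level 6 sits over level 3 through the degree-2 covering given by $q\mapsto q$ and $q\mapsto q^2$. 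The two conjugate branches $\pm\sqrt g$ are interchanged by the Atkin--Lehner involution in $\Gamma_0(6)+$ that does not lie in $\Gamma_0(3)$. The expansions of the previous step identify which branch corresponds to $q$ (namely $y^-=x+\cdots$) and which to $q^2$ (namely $y^+=x^2+\cdots$).

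To verify these relations, I would express $X_3$, $Z_3$, $X_6$, $Z_6$ and $\sqrt{g(X_6)}$ as eta quotients and Eisenstein combinations. Each claimed equality then becomes an identity between modular forms on $\Gamma_0(6)$, or on $\Gamma_0(12)$ if needed. Each of these is checked by comparing $q$-expansions up to the Sturm bound. The $q$-expansions can be generated by Theorem~\ref{T:4} from Table~\ref{T:GH}.

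As an alternative that avoids the modular identifications, I would argue purely with differential equations. By Theorem~\ref{T:2}, $F_6$ and $F_3$ satisfy explicit third-order linear ODEs, namely the symmetric squares coming from the data $G$, $H$. One checks symbolically that $(a-b\sqrt g)^{-1/2}F_3(y^-)$ satisfies the level 6 ODE. Since it is analytic at $0$ with value $1$, Frobenius uniqueness of the holomorphic solution gives equality with $F_6$. The same argument, using $y^+=x^2+\cdots$ and the constant $2/\sqrt{4}=1$, handles the $+$ branch. I would also compare coefficients to low order as a sanity check.

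The main obstacle is this final verification: the symbolic ODE check with the radicals $\sqrt g$, or equivalently establishing the exact modular identifications $y^-=X_3(q)$, $y^+=X_3(q^2)$ together with the correct weight-two multipliers. The algebraic parts are routine.
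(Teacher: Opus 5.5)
Your proposal is correct, and it takes a genuinely different route from the paper, whose proof is only a citation of equations (3.15), (3.16) and (3.31) of \cite{hme}.

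You check the two norm identities by direct expansion; your $c^2-d^2g=4x(1+4x)^3$ and $a^2-b^2g=4(1+4x)^2$ are right. You get the local behaviour of $y^-$ cheaply from $y^+y^-=x^3/(1+4x)^3$. You then reduce the transformation formula, with $x=X_6(q)$, to the modular statements $y^-=X_3(q)$, $y^+=X_3(q^2)$, $Z_3(q)=(a-b\sqrt g)^{1/2}Z_6(q)$ and $2Z_3(q^2)=(a+b\sqrt g)^{1/2}Z_6(q)$. These are the right identifications. From $X_6=q-10q^2+21q^3+\cdots$ one gets $y^-=q-42q^2+981q^3+\cdots=X_3(q)$ and $(a-b\sqrt g)^{1/2}=1+8q-16q^2+\cdots=Z_3(q)/Z_6(q)$.

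What your route buys is a structural explanation that the citation does not give. The two algebraic identities in the statement are the norms $X_3(q)X_3(q^2)=X_6^3/(1+4X_6)^3$ and $Z_3(q)Z_3(q^2)=(1+4X_6)Z_6^2$. The two branches are the images of $\tau$ and $2\tau$; on $X_0(6)$ they are interchanged by $W_2$ and by $W_6$ but fixed by $W_3$. So ``the'' involution in your sketch should be either of the first two. Your ODE alternative is the same routine the paper uses for Theorem~\ref{T:levels1and4}, with uniqueness coming from the factor $(n+1)^3$ in \eqref{E:rec1}.

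Two small repairs are needed. First, the phrase ``$(a-b\sqrt g)^{-1/2}F_3(y^-)=Z_3(q)$'' should read $F_3(y^-)=Z_3(q)$ and $F_3(y^+)=Z_3(q^2)$, which is what your next sentence actually uses. Second, the Sturm-bound step needs $\sqrt{g(X_6)}$ to be a modular function on $\Gamma_0(6)$, and you assert this without justification. It follows from \eqref{E:Zdefn}: since $G_6=(1+4x)(1-32x)$ and $1+4X_6=(1+w)^2/(1-w)^2$ with $w$ as in Table~\ref{T:00}, one gets
\[
\sqrt{g(X_6)}=\frac{q\,\ud\log w/\ud q}{Z_6}=\frac{3(P_1-2P_2-3P_3+6P_6)}{6P_6+3P_3-2P_2-P_1}.
\]
This is a quotient of weight-two forms on $\Gamma_0(6)$, so it is a modular function there.
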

\begin{proof}
This is immediate from~\cite[(3.15), (3.16) and (3.31)]{hme}.
\end{proof}

A consequence of Theorem~\ref{T:levels3and6} is that the positive rational level 6 series
for $N=3$, 5, 7, 13 and 17 in Table~\ref{T:6.1} are equivalent to the quadratic irrational level~3
series corresponding to $N=6$, 10, 14, 26 and 34 and their respective conjugate series in Table~\ref{T:3.1}.
Moreover, when $x=1/50$ we have $y^+=1/1458$ and $y^-=1/108$, so the level~6 series
for $N=2$ in Table~\ref{T:6.1} is equivalent to the series for $N=4$ (convergent) and $N=1$ (undefined)
in Table~\ref{T:3.1}.

Theorem~\ref{T:levels3and6} can also be used to show that the negative rational level 6 series
for $N=3$, 5, 7, 11, 19, 31 and 59 in Table~\ref{T:6.2} are equivalent to the
level~3 series in Table~\ref{T:3.1} and their conjugates in Table~\ref{T:3.2}
for the respective values of~$N$. For example, 
if we let $x=-1/(2^4\cdot5^2\cdot53^2)$ corresponding to $N=59$ in Table~\ref{T:6.2},
then we get $y^\pm = 1/(2^3\cdot3^3\cdot(892\pm525\sqrt{3})^3)$ and these correspond to the values
of~$X_3$ for~$N=59$
in Tables~\ref{T:3.1} and~\ref{T:3.2}.


Transformation formulas for other levels are often best expressed in terms of two variables.
As an example, consider levels~3 and~5.
Let $x=\eta_{3}^2\eta_{15}^2/\eta_{1}^2\eta_{5}^2$ and $y=\eta_{1}^3\eta_{15}^3/\eta_{3}^3\eta_{5}^3$.
Then
$$
9x^2y + xy^2 + 5xy - x + y=0,
$$
$$
w_{3}=\frac{\eta_{3}^{12}}{\eta_{1}^{12}}=\frac{x(1+5x+x y+10x^2)}{1-5y-9xy},
$$
and
$$
w_{5}=\frac{\eta_{5}^6}{\eta_{1}^6}=\frac{27x-2y+9xy+81x^2}{25(1-9xy)}.
$$
Moreover,
$$
\frac{Z_{3}}{Z_{5}}=\frac{1+10y+36xy+5y^2}{1+4y+18xy-y^2}.
$$


\section{Generalizations of hypergeometric transformation formulas}
\label{S:general}
The identities
\begin{align}
    \label{E:3F2transformations}
    \pFq{3}{2}{\frac12,\frac12,\frac12}{1,1}{x}
    &=\frac{2}{\sqrt{4-x}} \pFq{3}{2}{\frac16,\frac12,\frac56}{1,1}{\frac{27x^2}{(4-x)^3}}\\
    &=\frac{1}{\sqrt{1-4x}} \pFq{3}{2}{\frac16,\frac12,\frac56}{1,1}{\frac{-27x}{(1-4x)^3}}
    \nonumber
\end{align}
are part of a family of identities considered by the 
Borweins~\cite[pp. 180--181]{borwein-book} 
and by N. D. Baruah and B. C. Berndt~\cite{BB2010} and that were
subsequently generalized in a different way in~\cite{hme} (see
also~\cite[pp. 600--605]{cooperbook}). Another generalization of these identities is as follows.
 
\begin{theorem}[Levels $\ell$ and $4\ell$]
\label{T:levels1and4}
The pairs of identities
$$
X_{\ell}(q^2) = \Phi(X_{4\ell}(q)),
\qquad
\Phi(x) F_{\ell}(\Phi(x))^a = x^2F_{4\ell}(x)^a$$
and
$$
X_{\ell}(-q) = \Psi(X_{4\ell}(q)),
\qquad
\Psi(x) F_{\ell}(\Psi(x))^a = -xF_{4\ell}(x)^a$$
hold, for the following values of $\ell$, $a$, $\Phi$ and $\Psi$:

\begin{table}[H]
{\renewcommand{\arraystretch}{1.8}
{\begin{tabular}{|c|c|c|c|}
\hline
$\ell$ & $a$ & $\Phi(x)$ & $\Psi(x)$ \\
\hline
$1$ & $6$ & $\frac{x^2}{(1-16x)^3}$ & $\frac{-x}{(1-256x)^3}$\\
\hline
$2$ & $4$ & $\frac{x^2}{(1-4x)^4}$ & \\
\hline
$3$ & $3$ & $\frac{x^2}{(1-4x)^3}$ & $\frac{-x}{(1-16x)^3}$\\
\hline
$4$ & $2$ & $\frac{x^2}{(1-2x)^4}$ & \\
\hline
$5$ & $2$ & $\frac{x^2}{(1-4x)(1-2x)^2}$ & $\frac{-x}{(1-4x)(1-8x)^2}$\\
\hline
$6$ & $2$ & $\frac{x^2}{(1+4x^2)(1-2x)^2}$ & \\
\hline
$7$ & $3/2$ & $\frac{x^2}{(1-2x)^3}$ & $\frac{-x}{(1-4x)^3}$\\
\hline
$8$ & $1$ & $\frac{x^2}{(1-2x+2x^2)^2}$ & \\
\hline
$11$ & $1$ & $\frac{x^2}{1-4x+8x^2-4x^3}$ & $\frac{-x}{1-8x+16x^2-16x^3}$\\
\hline
$14$ & $1$ & $\frac{x^2}{(1-x+2x^2)^2}$ & \\
\hline
$15$ & $1$ & $\frac{x^2}{(1-x)(1-x+4x^2)}$ & $\frac{-x}{(1-4x)(1-x+4x^2)}$\\
\hline
$23$ & $1/2$ & $\frac{x^2}{(1-x)(1-x+2x^2)}$ & $\frac{-x}{(1-2x)(1-x+2x^2)}$\\
\hline
\end{tabular}}}
\end{table}

In addition, for $\ell=9$ we have the pairs of formulas
$$
X_9(q^2) = X_{36}(q)^2\left(\frac{1+X_{36}(q)}{1-3X_{36}(q)}\right), \quad
\frac{1}{1-3x} F_9\left(x^2\left(\frac{1+x}{1-3x}\right)\right) = F_{36}(x)
$$
and
$$
X_{9}(-q) = -X_{36}(q)\left(\frac{1+X_{36}(q)}{1-3X_{36}(q)}\right)^2, \quad
\frac{1}{(1-3x)^2} F_9\left(-x\left(\frac{1+x}{1-3x}\right)^2\right) = F_{36}(x).
$$
\end{theorem}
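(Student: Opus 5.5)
The plan is to reduce each identity to a finite check on $q$-expansions of modular forms. The justification comes from the Hauptmodul structure and Theorem~\ref{T:4}. Throughout, $F_\ell(X_\ell)=Z_\ell$ by~\eqref{E:Fdef}, so the $F$-identities are statements about the weight two forms $Z_\ell$ and $Z_{4\ell}$ near the cusp.

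\emph{Step 1: the Hauptmodul relations.} Both $X_\ell(q^2)$ and $X_\ell(-q)$ are modular functions for a group containing $\Gamma_0(4\ell)$. The aim is to show they are invariant under $\Gamma_0(4\ell)+$, or under a suitable normal extension: $q\mapsto -q$ corresponds to $\tau\mapsto\tau+\tfrac12$, and the Fricke involution $W_{4\ell}$ conjugates $\tau\mapsto 2\tau$ appropriately. Once invariance holds, each is a rational function of the Hauptmodul $X_{4\ell}$. The degree of this rational function equals the index of the covering, so it is~$2$ for $X_\ell(q^2)$ and~$1$ for $X_\ell(-q)$, except in the cases $\ell=9$ and when passing through intermediate covers. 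A rational function of known degree is then pinned down by finitely many $q$-coefficients. In practice I would verify $X_\ell(q^2)=\Phi(X_{4\ell}(q))$ and $X_\ell(-q)=\Psi(X_{4\ell}(q))$ by comparing expansions (computed via Fig.~\ref{fig:1} from Table~\ref{T:GH}) up to a Sturm-type bound. This is legitimate because both sides become modular forms of a fixed weight after clearing denominators with a suitable eta quotient.

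\emph{Step 2: the $F$-identities.} Write $x=X_{4\ell}(q)$. By~\eqref{E:Zdefn},
$$
Z_\ell(q^2)=\frac{1}{\sqrt{G_\ell(X_\ell(q^2))}}\cdot 2\,\frac{q\,\ud}{\ud q}\log X_\ell(q^2)\Big/2\cdot2 .
$$
Using $X_\ell(q^2)=\Phi(x)$ and the chain rule, this becomes
$$
Z_\ell(q^2)=\frac{2x\Phi'(x)}{\Phi(x)}\cdot\frac{\sqrt{G_{4\ell}(x)}}{\sqrt{G_\ell(\Phi(x))}}\;Z_{4\ell}(q).
$$
This gives $F_\ell(\Phi(x))$ as an explicit algebraic function of $x$ times $F_{4\ell}(x)$. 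The claim $\Phi F_\ell(\Phi)^a=x^2F_{4\ell}^a$ then reduces to a polynomial identity
$$
\Phi(x)\Bigl(\tfrac{2x\Phi'}{\Phi}\Bigr)^a\Bigl(\tfrac{G_{4\ell}(x)}{G_\ell(\Phi)}\Bigr)^{a/2}=x^2 ,
$$
which I check directly from Table~\ref{T:GH}. The same argument applies to $\Psi$, using $Z_\ell(-q)$ and $q\frac{\ud}{\ud q}$ commuting with $q\mapsto-q$; there the factor is $x\Psi'/\Psi$ with no $2$. Alternatively, one can observe that $\Phi F_\ell(\Phi)^a$ has the form $\eta$-quotient$(q^2)$, and match eta quotients from Tables~\ref{T:00} and~\ref{T:Z} directly. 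The exponent $a$ is exactly the one making $X_\ell Z_\ell^a$ a pure eta quotient; for example, $\ell=1$ gives $X_1Z_1^6=\eta_1^{24}$.

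\emph{Step 3: the case $\ell=9$.} This case is handled the same way, with the stated rational maps. The main obstacle is Step~1: justifying rigorously that $X_\ell(q^2)$ and $X_\ell(-q)$ are genuinely functions on $\Gamma_0(4\ell)+$, which requires checking the Atkin--Lehner behaviour. The cases $\ell=2,4,6,8,14$, where no $\Psi$ exists, show that $X_\ell(-q)$ is \emph{not} always invariant. If that check is awkward, I would fall back on the eta-quotient route: verify the identities as equalities of eta quotients and modular forms on $\Gamma_0(8\ell)$ by comparing enough coefficients, together with the analytic identity $X(0)=0$ for branch selection.
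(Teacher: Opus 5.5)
Your treatment of the $X$-identities is in line with the paper, which only says they follow from the theory of modular forms and omits the details. The real difference is in the $F$-identities.

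The paper proves the $F$-identities independently of the $X$-identities. It shows that both sides satisfy the same third-order linear differential equation, obtained from \eqref{E:nonlinear} by a change of variables, with the same initial conditions. You instead deduce them from the $X$-identities via \eqref{E:Zdefn} and the chain rule. Each $F$-identity then reduces to a first-order algebraic identity in $\Phi,\Phi',G_\ell,G_{4\ell}$. Equivalently, via your eta-quotient remark, it reduces to $X_\ell(q^2)Z_\ell(q^2)^a=X_{4\ell}(q)^2Z_{4\ell}(q)^a$; for example, both sides equal $\eta_2^{24}$ when $\ell=1$.

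This route does go through; the algebraic identity holds, e.g.\ for $\ell=1,5,9$. It explains the exponent $a$ and ties the two halves of each pair together. The price is that your $F$-identities now depend on the modular $X$-identities. The paper's ODE check, by contrast, is a purely formal power-series computation from $G$ and $H$ that needs no modular input.

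Two slips need correcting.

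First, the chain-rule constant is wrong. Since $(q\frac{\ud}{\ud q}\log X_\ell)(q^2)=\tfrac12\,q\frac{\ud}{\ud q}\log X_\ell(q^2)$, the correct relation is
$$
Z_\ell(q^2)=\frac{x\Phi'(x)}{2\Phi(x)}\sqrt{\frac{G_{4\ell}(x)}{G_\ell(\Phi(x))}}\;Z_{4\ell}(q),
$$
not the version with $2x\Phi'/\Phi$. The prefactor must equal $1$ at $x=0$ because $Z_\ell$ and $Z_{4\ell}$ both start with $1$. With your constant, the identity you propose to check is false by a factor $4^a$. For $\ell=1$, one has $G_1(\Phi)=(1-64x)(1+8x)^2/(1-16x)^3$ and $x\Phi'/(2\Phi)=(1+8x)/(1-16x)$. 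The prefactor is therefore $(1-16x)^{1/2}$, which gives exactly \eqref{E:1t}.

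Second, the degree count in Step~1 is wrong. On $X(\Gamma_0(4\ell)+)$, the function $X_\ell(q^2)$ has degree equal to the index of the covering, namely $3$ for odd $\ell$ and $4$ for even $\ell$. The function $X_\ell(-q)$ has degree $3$. These match $\deg\Phi$ and $\deg\Psi$ in the table, e.g.\ $x^2/(1-16x)^3$. A degree-one $X_\ell(-q)$ would be a Hauptmodul for $\Gamma_0(4\ell)+$, which the listed $\Psi$ rules out. So if you close Step~1 by pole counting, you need agreement up to $O(q^7)$ for odd $\ell$ and up to $O(q^9)$ for even $\ell$.

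Your Sturm-bound version is unaffected by this. Note, however, that for some $\ell$ the Hauptmodul has poles at elliptic points, e.g.\ $X_1=\eta_1^{24}/Q^3$. Clearing denominators then requires forms such as $Q(q^2)^3$, not only eta quotients.
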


\begin{proof}
The identities in Theorem~\ref{T:levels1and4} that involve $F_\ell$ can be
proved by showing that each side
satisfies the same third order linear differential equation 
by starting with~\eqref{E:nonlinear} and applying a change of variables,
and checking that both sides satisfy the same initial conditions. This is a routine procedure
that can be done with a computer, e.g., see~\cite[p. 350]{hme}. Similarly, the theory of
modular forms can be used to verify the identities that involve $X_\ell$. We omit the details.
\end{proof}

When $\ell=1$, the identities in Theorem~\ref{T:levels1and4} that involve $F_\ell$ are
\begin{equation}
\label{E:1t}
\frac{1}{(1-16x)^{1/2}}F_1\left(\frac{x^2}{(1-16x)^3}\right)
= F_4(x)
= \frac{1}{(1-256x)^{1/2}}F_1\left(\frac{-x}{(1-256x)^3}\right).
\end{equation}
If we replace $x$ with $x/64$, then these become the identities in~\eqref{E:3F2transformations}.
The corresponding identities involving $X_\ell$ are
$$
X_1(q^2) = \frac{X_4(q)^2}{(1-16X_4(q))^3}
\quad\text{and}\quad
X_1(-q) = \frac{-X_4(q)}{(1-256X_4(q))^3}
$$
and these can be verified by the theory of theta functions.

The next result is a cubic analogue of Theorem~\ref{T:levels1and4}.

\begin{theorem}[Levels $\ell$ and $9\ell$]
\label{T:levels1and9}
The identities
$$
X_{\ell}(q^3) = \Omega(X_{9\ell}(q))
\quad\text{and}\quad
\omega(x) F_{\ell}(\Omega(x)) = F_{9\ell}(x)$$
hold for the following values of $\ell$, $\Omega$ and $\omega$:

\begin{table}[H]
{\renewcommand{\arraystretch}{1.8}
{\begin{tabular}{|c|c|c|}
\hline
$\ell$ & $\Omega(x)$ & $\omega(x)$ \\
\hline
$1$ & $\frac{x^3}{(1-9x)^3(1+3x)^3}$ & $\frac{1}{(1-9x)^{1/2}(1+3x)^{1/2}}$\\ \hline
$2$ & $\frac{x^3(1+x)^3}{(1+4x)^2(1-2x)^4}$ & $\frac{1}{(1-2x)(1+4x)^{1/2}}$\\ \hline
$3$ & $\frac{x^3(1-3x+3x^2)^3}{(1-3x)^6}$ & $\frac{1}{(1-3x)^2}$\\ \hline
$4$ & $\frac{x^3}{(1-3x)^3(1+x)^3}$ & $\frac{1}{(1-3x)^{3/2}(1+x)^{3/2}}$\\ \hline
$5$ & $\frac{x^3}{(1-3x^2)^2(1-3x+3x^2)}$ & $\frac{1}{(1-3x^2)(1-3x+3x^2)^{1/2}}$ \\
\hline
\end{tabular}}}
\end{table}
\end{theorem}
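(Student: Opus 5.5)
The plan follows the template of the proof of Theorem~\ref{T:levels1and4}: treat the modular identities $X_\ell(q^3)=\Omega(X_{9\ell}(q))$ and the series identities $\omega(x)F_\ell(\Omega(x))=F_{9\ell}(x)$ separately. I would do the modular identity first and then deduce the series identity from it.

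\textbf{The modular identity.} Both $X_\ell(q^3)$ and $X_{9\ell}(q)$ are modular functions on $\Gamma_0(9\ell)+$, since $\tau\mapsto 3\tau$ conjugates the Fricke involution of level $9\ell$ into that of level $\ell$. That group has genus zero and Hauptmodul $X_{9\ell}$, so $X_\ell(q^3)$ must be a rational function of $X_{9\ell}$. Its degree is the index of $\Gamma_0(9\ell)+$ in the conjugate of $\Gamma_0(\ell)+$, which is $[\Gamma_0(\ell):\Gamma_0(9\ell)]$ up to the Atkin--Lehner bookkeeping. This degree matches the degree of $\Omega$ in the table, for example $3$ for $\ell=1$ and $9$ for $\ell=3$.

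Consequently it suffices to compare $q$-expansions up to a Sturm-type bound fixed by that degree, using the expansions from Tables~\ref{T:00} and the Maple procedure of Theorem~\ref{T:4}. As a consistency check, the leading term $X_\ell(q^3)=q^3+\cdots$ matches $\Omega(x)=x^3+O(x^4)$. For $\ell=1$ one can alternatively verify the identity directly from the classical cubic modular equation for $X_1=\eta^{24}/Q^3$ together with the level~9 eta quotient.

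\textbf{The series identity.} Here I would argue as in the proof of Theorem~\ref{T:levels1and4}, working with the linear third-order equation underlying~\eqref{E:nonlinear}. Since $Z$ satisfies~\eqref{E:nonlinear}, $\sqrt{Z}$ satisfies a second-order linear equation and $Z=F_\ell(X)$ satisfies a third-order one. Its coefficients are determined by $G_\ell,H_\ell$ via the operator $\uD=X\sqrt{G(X)}\,\ud/\ud X$.

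Substituting $X=\Omega(x)$ and multiplying by $\omega(x)$, I would check by computer algebra that $\omega(x)F_\ell(\Omega(x))$ satisfies the level-$9\ell$ equation built from $G_{9\ell},H_{9\ell}$. Equivalently, I would check that the pair $\bigl(\Omega(x),\,\omega(x)\bigr)$ transforms \eqref{E:nonlinear} for level $\ell$ into \eqref{E:nonlinear} for level $9\ell$.

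Since $\Omega(x)=O(x^3)$ and $\omega(0)=1$, both sides are power series with constant term $1$. The indicial equation at $x=0$ has a triple root $0$, so the analytic solution with value $1$ is unique, and the identity follows.

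A cleaner modular route is also available and serves as a cross-check. By the modular identity and~\eqref{E:Zdefn}, one has $Z_\ell(q^3)=\frac{1}{3}\,\frac{q\,\ud\log\Omega(X_{9\ell})/\ud q}{\sqrt{G_\ell(\Omega(X_{9\ell}))}}$. Expanding this shows that $Z_{9\ell}/Z_\ell(q^3)$ equals
$$
\frac{3\,\Omega(x)\sqrt{G_\ell(\Omega(x))}}{x\,\Omega'(x)\sqrt{G_{9\ell}(x)}}\Big|_{x=X_{9\ell}},
$$
and the identity reduces to the algebraic fact that this quotient equals $\omega(x)$.

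\textbf{Main obstacle.} I expect the hardest step to be this final algebraic verification: showing that $G_\ell(\Omega(x))\,\Omega(x)^2\cdot 9/(x^2\Omega'(x)^2 G_{9\ell}(x))=\omega(x)^2$ as rational functions, with the correct branch of the square roots near $x=0$. The entries $G_{9\ell}$ must be taken from Table~\ref{T:GH}, and the factorizations of $G_\ell(\Omega(x))$ must produce perfect squares times $G_{9\ell}$. I would check this first for each of the five rows, since it also confirms the modular identity's consistency.
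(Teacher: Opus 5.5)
Your proposal is correct. The paper gives no separate proof of this theorem. The intended argument is the template stated for Theorem~\ref{T:levels1and4}: show that both sides of the $F$-identity satisfy the same third-order linear equation obtained from \eqref{E:nonlinear} by a change of variables, with matching initial data, and verify the $X$-identities by the theory of modular forms. Your main plan is exactly this.

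Your ``cleaner modular route'' is a genuinely different argument for the series identity, and it is worth keeping. Once $X_\ell(q^3)=\Omega(X_{9\ell}(q))$ is established, \eqref{E:Zdefn} alone expresses $Z_{9\ell}/Z_\ell(q^3)$ as an algebraic function of $X_{9\ell}$. The $F$-identity then needs neither the third-order equation nor the polynomials $H_\ell$, $H_{9\ell}$; it needs only $G_\ell$, $G_{9\ell}$ and a polynomial identity.

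For $\ell=1$, put $u=(1-9x)(1+3x)$. Then $x\Omega'(x)/\Omega(x)=3(1+27x^2)/u$ and $G_1(\Omega(x))=G_9(x)(1+27x^2)^2/u^3$, so your quotient is $u^{-1/2}=\omega(x)$ immediately. The branch question is vacuous, since everything is $1+O(x)$. Conversely, the ODE route proves the $F$-identity with no modular input at all, so the two arguments serve as independent checks.

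Two small corrections.

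\begin{itemize}
\item The degree of $\Omega$ for $\ell=1$ is $6$, not $3$. The value $3$ is its order of vanishing at $x=0$; the degree comes from the index $[\Gamma_0(1):\Gamma_0(9)]=12$ halved by $W_9$. It is the degree that fixes how many $q$-coefficients the Sturm-type comparison must check.
\item For $\ell=2,4,5$ the group $\Gamma_0(9\ell)+$ contains Atkin--Lehner involutions besides the Fricke one, and invariance of $X_\ell(q^3)$ under them needs a line. It suffices to note that $W_\ell$ of level $9\ell$ conjugates under $\tau\mapsto 3\tau$ to $W_\ell$ of level $\ell$. Then $W_9$ is, modulo $\Gamma_0(9\ell)$, the product of this with the Fricke involution.
\end{itemize}
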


As a consequence of Theorem~\ref{T:levels1and4}, for $\ell\in\left\{1,2,3,4,5,6,7,8,9,11,14,15,23\right\}$,
all of the rational and quadratic irrational Ramanujan type series for level~$4\ell$ can be deduced from the
corresponding series for level~$\ell$. For example, all level~4 series in Table~\ref{T:4.1}
can be deduced from the corresponding level~$1$ series in Table~\ref{T:1.1} by
the formulas in~\eqref{E:1t}.

Similarly, for $\ell\in \left\{1,2,3,4,5\right\}$ all rational and quadratic irrational
Ramanujan type series for level~$9\ell$, with three exceptions, can
be deduced from the corresponding series for level~$\ell$ by the identities in
Theorem~\ref{T:levels1and9};
the exceptions are the negative level 9 series corresponding to $N=1$, $2$ and $3$ in Table~\ref{T:9.1}.

Let $w=\eta_1^2\eta_2^2\eta_9^2\eta_{18}^2/\eta_3^4\eta_6^4$. The level 18 modular forms in
Tables~\ref{T:00} and~\ref{T:Z} are
$$
X_{18} = \frac{w}{1-w} \quad\text{and}\quad
Z_{18}=\frac{18P_{18}-9P_9-12P_6+6P_3+2P_2-P_1}{4(1+4X_{18})^{3/2}}
$$
and the associated polynomials in Table~\ref{T:GH} are
$$
G = (1+x)^2(1+4x)(1-8x)\quad\text{and}\quad H=24x^2(1+x)(2+5x).
$$
On the other hand, the level 18 modular functions in~\cite{cooperlevel7} are
\begin{equation}
\label{E:18ZX}
\tilde{X} = \frac{w}{(1+3w)^2}=\frac{X_{18}(1+X_{18})}{(1+4X_{18})^2}\quad\text{and}\quad
\tilde{Z} = (1+4X_{18})^{3/2}Z_{18}
\end{equation}
with associated polynomials
$$
\tilde{G} = (1-12x)(1-16x)\quad\text{and}\quad \tilde{H}=12x(1-15x).
$$
By the change of variables in~\eqref{E:18ZX} the data in Table~\ref{T:18.1} for
$N=2$, 5, 11, 29, $(X_{18}>0)$ and $N=13$, 25, 37, $(X_{18}<0)$ is equivalent to the data
in~\cite[Table 2]{cooperlevel7}.
Furthermore, by Theorem~\ref{T:levels1and9} in the case $\ell=2$ we have
$$
Z_{18} = \frac{1}{(1-2X_{18})(1+4X_{18})^{1/2}}
F_2\left(\frac{X_{18}^3(1+X_{18})^3}{(1+4X_{18})^2(1-2X_{18})^4}\right)
$$
and combining this with~\eqref{E:18ZX} we deduce
$$
\sum_{n=0}^\infty \tilde{T}(n) \frac{x^n(1+x)^n}{(1+4x)^{2n}}
= \frac{1+4x}{1-2x} \pFq{3}{2}{\frac14,\frac12,\frac34}{1,1}{\frac{256x^3(1+x)^3}{(1+4x)^2(1-2x)^4}}
$$
where $\tilde{T}(n)$ are the coefficients in the expansion $\tilde{Z}=\sum_{n=0}^\infty \tilde{T}(n)\tilde{X}^n$, so that by Theorem~\ref{T:2} applied to $\tilde{G}$ and $\tilde{H}$ we
have (see~\cite[(21)]{cooperlevel7} or \cite[p. 422]{cooperbook}),
$$
(n+1)^3\tilde{T}(n+1)=2(2n+1)(7n^2+7n+3)\tilde{T}(n)-12n(16n^2-1)\tilde{T}(n-1).
$$

The level 13, 16, 17 and 23 series in~\cite{cooperye13}, \cite{ye16}, \cite{huber17}
and~\cite{coopergeye}, respectively, can be
related to the data in Tables~\ref{T:13.1}, \ref{T:16.1}, \ref{T:17.1} and~\ref{T:23.1} by 
similar techniques.

\section{Some remarks}
\label{S:remarks}

\subsection{The value $X_{\ell}(e^{-2\pi \sqrt{N/\ell}})$}
For a given $\ell$, it may be useful to note how to identify all values $X_{\ell}(e^{-2\pi \sqrt{N/\ell}})$ that are at most quadratic irrationals. We briefly illustrate this in the case where $\ell$ is odd and $N\ell$ is
square-free.
Write $\tau_{0}=\frac{\sqrt{-4N\ell}}{2\ell}$. So, the $\mathbb{Z}$-module $[\ell,\ell\tau_{0}]$ is a proper ideal of the order $\mathcal{O}=\mathbb{Z}[\sqrt{-4N\ell}]$. By assumption, $\mathcal{O}$ is either a maximal order or an index-2 order. Either way, $\ell\mathcal{O}$ is ramified. So, suppose that $\ell\mathcal{O}=\mathfrak{l}^{2}$ with $\mathfrak{l}=[\ell,\ell\tau_{0}]$.

Recall that the moduli interpretation of $\Gamma_{0}(\ell)\backslash\mathbb{H}$ is given by
$$
[\tau]\to [\mathbb{C}/[1,\ell\tau], \mathbb{C}/[1,\tau]].
$$
Then the CM point $[\tau_{0}]$ corresponds to the isomorphism class $[\mathbb{C}/\mathcal{O},\mathbb{C}/\mathfrak{l}^{-1}]$. The $\mathbb{Q}[\sqrt{-N\ell}]$-Galois orbit of $[\mathbb{C}/\mathcal{O},\mathbb{C}/\mathfrak{l}^{-1}]$ is exactly $\{[\mathbb{C}/\mathfrak{a},\mathbb{C}/\mathfrak{l}^{-1}\mathfrak{a}]\}$, where $[\mathfrak{a}]$ runs over the ideal class group ${\rm Cl}(\mathcal{O})$ of $\mathcal{O}$, and thus, the $\mathbb{Q}[\sqrt{-N\ell}]$-Galois orbit of $[\tau_{0}]$ is exactly $\{[\tau_{\mathfrak{l}\mathfrak{a}}]\}$, where $\tau_{\mathfrak{a}}$ denotes the CM point induced by an ideal $\mathfrak{a}$.
Moreover,  since $\mathfrak{l}=\overline{\mathfrak{l}}$, this is actually a $\mathbb{Q}$-Galois orbit in $\Gamma_{0}(\ell)\backslash\mathbb{H}$. Under the  rational morphism $X(\Gamma_{0}(\ell))\to X(\Gamma_{0}(\ell)+)$, this descends to be a $\mathbb{Q}$-Galois orbit in $\Gamma_{0}(\ell)+\backslash\mathbb{H}$.
In addition, since $\begin{pmatrix}
    0&-1\\\ell&0
\end{pmatrix}\in\Gamma_{0}(\ell)+$, then $[-\frac{1}{\ell\tau_{0}}]$ is equivalent to $[\tau_{0}]$. Notice that $-\frac{1}{\ell\tau_{0}}=\tau_{\mathfrak{n}\mathfrak{l}}$, where $\mathfrak{n}$ is the ideal above $N$ that must be different from $\mathfrak{l}$, since $N\ell$ is square free. So, the cardinality of the $\mathbb{Q}$-Galois orbit of $[\tau_{0}]$ in $\Gamma_{0}(\ell)+\backslash\mathbb{H}$ is at most $|{\rm Cl}(\mathcal{O})|/2$. Therefore, $X_{\ell}(e^{-2\pi \sqrt{N/\ell}})$ must be an algebraic number of degree at most~$|{\rm Cl}(\mathcal{O})|/2$. So, to identify all the $N$'s such that the value $X_{\ell}(e^{-2\pi \sqrt{N/\ell}})$ is at most quadratic irrational, one only needs to identify all the $N$'s such that $|{\rm Cl}(\mathcal{O})|/2\leq 2$, i.e., $|{\rm Cl}(\mathcal{O})|\leq 4$, where $\mathcal{O}=\mathbb{Z}[\sqrt{-4N\ell}]$. For example, when $\ell=7$, the largest order discriminant such that $|{\rm Cl}(\mathcal{O})|\leq 4$ is $-532$, which corresponds to $N=19$ in Table~\ref{T:7.1}.


\subsection{$X_{1}$ as a cube}
A close look at the values of $X_{1}$ in Tables~\ref{T:1.1} and~\ref{T:1.2} shows that some of them
can be written as cubes of numbers that also lie in $\mathbb{Q}[X_{1}]$. 
This follows from a classical result due to Schertz stating that for an imaginary quadratic point~$\tau_{0}$, when $[1,\tau_{0}]$ is a fractional ideal of some quadratic order with discriminant \emph{not} divisible by~$3$,   $X_{1}(\tau_{0})^{\frac{1}{3}}$ generates the same number field over~$\mathbb{Q}$ as $X_{1}(\tau_{0})$. More generally, Schertz \cite{S76} proved that:
\begin{theorem}[Schertz]\label{scherthm}
    Let 
    $$
    \tau_{0}=\begin{cases}
        \frac{\sqrt{-d}}{2}&\mbox{if $-d\equiv0\pmod{4}$,}\\
        \frac{3+\sqrt{-d}}{2}&\mbox{if $-d\equiv1\pmod{4}$}
    \end{cases}
    $$
    be such that $[1,\tau_{0}]$ is a quadratic order of discriminant~$-d$. Then
    \begin{enumerate}
        \item if $3\nmid d$, one has $\mathbb{Q}[X_{1}(\tau_{0})^{\frac{1}{3}}]=\mathbb{Q}[X_{1}(\tau_{0})]$,

        \item if $3\mid d$, one has $[\mathbb{Q}[X_{1}(\tau_{0})^{\frac{1}{3}}]:\mathbb{Q}[X_{1}(\tau_{0})]]=3$. 
    \end{enumerate}
\end{theorem}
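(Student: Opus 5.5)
The plan is to reduce the statement to Weber's classical result on the class invariant $\gamma_2$, proved via Shimura reciprocity at level~$3$.

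\textbf{Step 1: identify $X_1^{1/3}$ with $1/\gamma_2$.} By \eqref{E:rde1} we have $X_1=\eta^{24}/Q^3=1/j$. The natural cube root is
$$X_1^{1/3}=\frac{\eta^8(\tau)}{Q^{1/3}\cdot Q^{2/3}}\cdot Q^{0}=\frac{\eta^8}{Q}=\frac{1}{\gamma_2(\tau)},$$
where $\gamma_2=Q/\eta^8$ is Weber's function. It has a $q$-expansion $q^{-1/3}(1+\cdots)$ with integer coefficients, and it generates the function field of $\Gamma(3)$ over that of $\mathrm{SL}_2(\mathbb{Z})$ as a cyclic cubic extension. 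Its transformation laws are
$$\gamma_2(\tau+1)=\zeta_3^{-1}\gamma_2(\tau),\qquad \gamma_2(-1/\tau)=\gamma_2(\tau).$$
Moreover, the action of $(\mathbb{Z}/3)^\times$ on $q$-coefficients is trivial.

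\textbf{Step 2: the Galois action on $\gamma_2(\tau_0)$.} By the first main theorem of complex multiplication, $\mathbb{Q}(\sqrt{-d},j(\tau_0))$ is the ring class field $H_{\mathcal{O}}$. Shimura reciprocity describes $\mathrm{Gal}(H_{\mathcal{O},3}/H_{\mathcal{O}})$ as the image of $(\mathcal{O}/3\mathcal{O})^\times$ modulo $\mathcal{O}^\times$ and $(\mathbb{Z}/3)^\times$. An element $x=s+t\tau_0$ acts on level~$3$ values through the matrix $g_x\in \mathrm{GL}_2(\mathbb{Z}/3)$ representing multiplication by $x$ on the basis $[\tau_0,1]$. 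Since $\gamma_2$ has rational coefficients, only the $\mathrm{SL}_2$-part matters, and this acts through the character $\mathrm{SL}_2(\mathbb{Z}/3)\to \mu_3$, $T\mapsto\zeta_3^{-1}$, $S\mapsto 1$. So I will compute this character on $g_x$ explicitly for both normalizations of $\tau_0$.

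For $\tau_0=\sqrt{-d}/2$ we have $\tau_0^2=-d/4$. For $\tau_0=(3+\sqrt{-d})/2$ we have $\tau_0^2=3\tau_0-(9+d)/4$. The shift by $3$ is precisely what makes $g_x$ lie, modulo $3$, in a subgroup on which the character is trivial exactly when $3\nmid d$. This should follow from a short case check of $d \bmod 3$, using the abelianization of $\mathrm{SL}_2(\mathbb{F}_3)$ (order $3$). The conclusion is the following dichotomy:
\begin{itemize}
\item if $3\nmid d$, every $x$ fixes $\gamma_2(\tau_0)$, so $\gamma_2(\tau_0)\in H_{\mathcal{O}}$;
\item if $3\mid d$, some $x$ sends $\gamma_2(\tau_0)$ to $\zeta_3\gamma_2(\tau_0)$.
\end{itemize}
The second case gives degree exactly $3$ over $\mathbb{Q}(\sqrt{-d},j(\tau_0))$, since the degree divides $3$ because $\gamma_2^3=j$.

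\textbf{Step 3: descend to $\mathbb{Q}$.} For the chosen $\tau_0$, $q^{1/3}$ is real: it is $e^{-\pi\sqrt d/3}$ times $1$ or times $e^{\pi i}$. Hence $\gamma_2(\tau_0)$ is real, as is $j(\tau_0)$. In the case $3\nmid d$, $\gamma_2(\tau_0)$ lies in $H_{\mathcal{O}}\cap\mathbb{R}$, and this field equals $\mathbb{Q}(j(\tau_0))$ because $[H_{\mathcal{O}}:\mathbb{Q}(j(\tau_0))]=2$ via complex conjugation. This gives part (1). In the case $3\mid d$, the real cube root $\gamma_2(\tau_0)$ is not in $H_{\mathcal{O}}$, hence not in $\mathbb{Q}(j(\tau_0))$. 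Since it is a root of $y^3-j(\tau_0)$, its degree over $\mathbb{Q}(j(\tau_0))$ is $3$; a degree of $2$ is impossible because that would force a linear factor. This gives part (2).

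The main obstacle is Step~2: getting the explicit Shimura-reciprocity matrices right and showing the $\mu_3$-character vanishes precisely when $3\nmid d$. This is where the particular choice $(3+\sqrt{-d})/2$, rather than $(1+\sqrt{-d})/2$, enters. If this computation becomes unwieldy, I would instead cite Weber's/Schertz's original statement that $\gamma_2(\tau_0)$ is a class invariant for $3\nmid d$, and give only the degree-$3$ argument for $3\mid d$.
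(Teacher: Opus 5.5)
Your proof is correct, and it takes a different route from the paper: the paper gives no argument for this theorem and simply quotes it from Schertz~\cite{S76}. You derive it from Shimura reciprocity for the level-$3$ function $\gamma_2=1/X_1^{1/3}$, using the matrices $g_x$ in the Gee--Stevenhagen form. You then descend using two facts: $\gamma_2(\tau_0)$ is the real cube root of $j(\tau_0)$, and $H_{\mathcal O}\cap\mathbb{R}=\mathbb{Q}(j(\tau_0))$.

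The case check you deferred in Step~2 does go through, and it is short.
\begin{itemize}
\item In both normalizations $\tau_0^2+B\tau_0+C=0$ with $B\equiv 0$ and $C\equiv d\pmod 3$. Hence $g_x\equiv\begin{pmatrix}s&-Ct\\ t&s\end{pmatrix}\pmod 3$, with $\delta=s^2+Ct^2=\pm1$.
\item The $\mathrm{SL}_2$-part $\mathrm{diag}(1,\delta)\,g_x$ has trace $s(1+\delta)$.
\item The kernel of the order-$3$ character of $\mathrm{SL}_2(\mathbb{F}_3)$ is its quaternion subgroup, which consists of $\pm I$ together with the trace-zero elements.
\item If $3\nmid d$, a nonzero trace forces $\delta=1$ and $s=\pm1$. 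Then $Ct^2\equiv 0$ gives $t\equiv0$, so the matrix is $\pm I$.
\item If $3\mid d$, the unit $x=1+\tau_0$ gives $\begin{pmatrix}1&0\\1&1\end{pmatrix}$, which has order $3$.
\end{itemize}
The quaternion subgroup is normal in $\mathrm{GL}_2(\mathbb{F}_3)$ and stable under inversion and transposition. So this dichotomy does not depend on conventions ($g_x$ versus $g_x^{-1}$ or $g_x^{T}$, or which side the determinant is split off).

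Compared with the bare citation, your route shows exactly why the normalization $(3+\sqrt{-d})/2$ (that is, $3\mid B$) is needed, and why the statement concerns the real cube root. The citation is shorter, and Schertz's paper also treats the other Weber functions.

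There are three small slips, none of which affects the argument.
\begin{itemize}
\item $\gamma_2$ generates the function field of the index-$3$ normal subgroup $\ker\chi\supseteq\Gamma(3)$, not that of $\Gamma(3)$. The latter has degree $12$ over $\mathbb{C}(j)$.
\item ``The degree divides $3$ because $\gamma_2^3=j$'' is not a valid reason, since a cube root can have degree $2$ over a field not containing $\zeta_3$. You never need the degree over $H_{\mathcal O}$, though. Step~3 works over the real field $\mathbb{Q}(j(\tau_0))$, where $y^3-j(\tau_0)$ is irreducible as soon as its unique real root lies outside the field.
\item In part~(2) you need $\gamma_2(\tau_0)\neq 0$, i.e.\ $d\neq 3$, to conclude that multiplying by $\zeta_3$ actually moves it. This case is implicitly excluded, because $X_1(\tau_0)=1/j(\tau_0)$ is undefined there.
\end{itemize}
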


To see how this plays a role, for example, note that in Table~\ref{T:1.1}, for a given positive integer~$N$, $\tau_{0}=\sqrt{-N}$, and thus, $[1,\tau_{0}]$ is the quadratic order of discriminant~$-4N$, and thus, by Theorem~\ref{scherthm}(1), when $3\nmid N$, $X_{1}(\tau_{0})^{\frac{1}{3}}\in \mathbb{Q}[X_{1}(\tau_{0})]$. On the other hand, for the $\tau_{0}$'s considered in Table~\ref{T:1.1} such that the discriminant $-4N$ of $[1,\tau_{0}]$ is divisible by~3, by Theorem~\ref{scherthm}(2), $\mathbb{Q}[X_{1}(\tau_{0})^{\frac{1}{3}}]$ must be a cubic extension of $\mathbb{Q}[X_{1}(\tau_{0})]$. This explains why these $X_{1}(\tau_{0})$ are not written as a cube over $\mathbb{Q}[X_{1}(\tau_{0})]$ in the table.

\vfill
\pagebreak[4]

\section{Tables}
\label{S:tables}
\setlength{\tabcolsep}{10pt} 
\renewcommand{\arraystretch}{2.65} 

\begin{longtable}{ | c | l l |}
   \caption{Hauptmoduln $X_\ell$ for $\Gamma_0(\ell)+$, for Theorem~\ref{T:1}\\}
   \label{T:00}\\
      \hline
      $\ell$ & $X=X_\ell$ & \\
      \hline
      $1$
      & $\displaystyle{X_{1} = \frac{\eta_1^{24}}{Q^3(q)}}$,
      & $\displaystyle{Q(q)=1+240\sum_{j=1}^\infty \frac{j^3q^j}{1-q^j}}$
      \\
      \hline
      $2$
      & $\displaystyle{X_{2} = \frac{w}{(1+64w)^2}}$,
      & $\displaystyle{w=\frac{\eta_2^{24}}{\eta_1^{24}}}$
      \\
      \hline
      $3$
      & $\displaystyle{X_{3} = \frac{w}{(1+27w)^2}
      =\left(\frac{\eta_1\eta_3}{\sigma_3}\right)^{6}}$,
      & $\displaystyle{w=\frac{\eta_3^{12}}{\eta_1^{12}},\qquad
      \sigma_3=\sum_{j,k} q^{j^2+jk+k^2}}$
      \\
      \hline
      $4$
      & $\displaystyle{X_{4} = \frac{w}{(1+16w)^2} 
      = \frac{\eta_1^{24}\eta_4^{24}}{\eta_2^{48}}
      =\frac{\eta_2^{12}}{\phi^{12}(q)}}$,
      & $\displaystyle{w=\frac{\eta_4^8}{\eta_1^8},\qquad\phi(q)=\sum_j q^{j^2}}$
      \\
      \hline
      $5$
      & $\displaystyle{X_{5} = \frac{w}{1+22w+125w^2}}$,
      & $\displaystyle{w=\frac{\eta_5^6}{\eta_1^6}}$
      \\
      \hline
      $6$
      & $\displaystyle{X_{6} = \frac{w}{(1-w)^2}}$,
      & $\displaystyle{w=\frac{\eta_{1}^{12}\eta_6^{12}}{\eta_2^{12}\eta_3^{12}}}$
      \\
      \hline
      $7$
      & $\displaystyle{X_{7} = \frac{w}{1+13w+49w^2}} 
      = \left(\frac{\eta_1\eta_7}{\sigma_7}\right)^3$,
      & $\displaystyle{w=\frac{\eta_7^4}{\eta_1^{4}},\qquad
      \sigma_{7}=\sum_{j,k}q^{j^2+jk+2k^2}}$
      \\
      \hline
      $8$
      & $\displaystyle{X_{8} = \frac{\eta_{1}^8\eta_8^8}{\eta_2^{8}\eta_4^8}}$ &
      \\
      \hline
      $9$
      & $\displaystyle{X_{9} = \frac{\eta_{1}^6\eta_9^6}{\eta_3^{12}}}$ &
      \\
      \hline
      $10$
      & $\displaystyle{X_{10} = \frac{w}{(1+4w)^2}}$,
      & $\displaystyle{w=\frac{\eta_2^{4}\eta_{10}^4}{\eta_1^{4}\eta_5^4}}$
       \\
      \hline
      $11$ &
      $\displaystyle{X_{11}=\left(\frac{\eta_1\eta_{11}}{\sigma_{11}}\right)^2},$
      & $\displaystyle{\sigma_{11}=\sum_{j,k}q^{j^2+jk+3k^2}}$ 
       \\\hline
      $12$
      & $\displaystyle{X_{12} = \frac{w}{(1+w)^2}}$,
      & $\displaystyle{w=\frac{\eta_1^{4}\eta_{12}^4}{\eta_3^{4}\eta_4^4}}$
       \\
      \hline
      $13$
      & $\displaystyle{X_{13} = \frac{w}{1+5w+13w^2}}$,
      & $\displaystyle{w=\frac{\eta_{13}^2}{\eta_1^{2}}}$
      \\
      \hline
      $14$
      & $\displaystyle{X_{14} = \frac{w}{(1+w)(1+8w)}}$,
      & $\displaystyle{w=\frac{\eta_2^{3}\eta_{14}^3}{\eta_1^{3}\eta_7^3}}$
       \\
      \hline
      $15$
      & $\displaystyle{X_{15} = \frac{w}{(1+3w)^2}}$,
      & $\displaystyle{w=\frac{\eta_3^{2}\eta_{15}^2}{\eta_1^{2}\eta_5^2}}$
       \\
      \hline
      $16$ &
      $\displaystyle{
      X_{16} = \frac{\eta_1^4\eta_4^4\eta_{16}^4}{\eta_2^6\eta_8^6}}$
      &  \\
      \hline
      $17$ &
      $\displaystyle{X_{17} = \left(\frac{2\eta_1\eta_{17}}{\sigma_{17}}\right)^2}$,
      &$\displaystyle{\sigma_{17}=
      \sum_{j\,\text{odd}}\sum_k \left\{q^{\frac14(j^2+17k^2)}-q^{\frac14(17j^2+k^2)}\right\}}$ \\
      \hline
      $18$ &
      $\displaystyle{X_{18} = \frac{w}{1-w}}$,
      & $\displaystyle{w=\frac{\eta_1^2\eta_{2}^2\eta_9^2\eta_{18}^2}{\eta_3^{4}\eta_6^4}}$ 
       \\
      \hline
      $19$ &
      $\displaystyle{X_{19}=
      \left(\frac{2\eta_1\eta_{19}}{\sigma_{19A}-\sigma_{19B}}\right)^3}$,
      & $\displaystyle{\sigma_{19A}=\sum_{j,k}q^{\frac12(j^2+2jk+20k^2)},\;
      \sigma_{19B}=\sum_{j,k}q^{\frac12(4j^2+2jk+5k^2)}}$  
      \\
      \hline
      $20$ &
      $\displaystyle{X_{20} = \frac{w}{(1+w)^2}}$,
      &$\displaystyle{w=\frac{\eta_1^2\eta_{20}^2}{\eta_4^{2}\eta_5^2}}$  
       \\
      \hline
      $21$ &
      $\displaystyle{X_{21} = \frac{w}{(1-w)^2}}$,
      &$\displaystyle{w=\frac{\eta_1^2\eta_{21}^2}{\eta_3^{2}\eta_7^2}}$
     \\
      \hline
      $22$ & 
      $\displaystyle{X_{22} = \frac{w}{(1+2w)^2}}$,
      &$\displaystyle{w=\frac{\eta_2^2\eta_{22}^2}{\eta_1^{2}\eta_{11}^2}}$
      \\
      \hline
      $23$ &
      $\displaystyle{X_{23}=\frac{2\eta_1\eta_{23}}{\sigma_{23A}+\sigma_{23B}}},$
      & $\displaystyle{\sigma_{23A}=\sum_{j,k}q^{j^2+jk+6k^2},\;
      \sigma_{23B}=\sum_{j,k}q^{2j^2+jk+3k^2}}$ 
       \\\hline
      $24$ &
      $\displaystyle{X_{24}=\frac{\eta_1^2\eta_3^2\eta_8^2\eta_{24}^2}
      {\eta_2^2\eta_4^2\eta_6^2\eta_{12}^2}}$ &
       \\\hline
      $25$ &
      $\displaystyle{X_{25}=\frac{w}{1+2w+5w^2}},$
      &$\displaystyle{w= \frac{\eta_{25}}{\eta_1}}$
       \\\hline
      $26$ &
      $\displaystyle{X_{26}=\frac{w}{(1-w)^2}},$
      &$\displaystyle{w= \frac{\eta_1^2\eta_{26}^2}
      {\eta_2^2\eta_{13}^2}}$
       \\\hline
      $27$ &
      $\displaystyle{\frac{1}{X_{27}^2}-\frac{3}{X_{27}}=\frac{1}{w}-3,}$
      &$\displaystyle{w= \frac{\eta_{1}^3\eta_{27}^3}{\eta_3^3\eta_{9}^3}
      }$ 
       \\\hline
      $28$ &
      $\displaystyle{X_{28}=\frac{\eta_1^3\eta_4^3\eta_7^3\eta_{28}^3}
      {\eta_2^6\eta_{14}^6}}$
      & 
       \\\hline
      $29$ &
      $\displaystyle{\frac{1}{X_{29}}=\frac{\sigma_{29}}{2\eta_1\eta_{29}}+1},$
      &$\displaystyle{\sigma_{29}=
      \sum_{j\,\text{odd}}\sum_k\left\{q^{\frac14(j^2+29k^2)}-q^{\frac14(29j^2+k^2)}\right\}}$
       \\\hline  
      $30$ &
      $\displaystyle{X_{30}=\frac{w}{(1-w)^2}},$
      &$\displaystyle{w= \frac{\eta_1\eta_2\eta_{15}\eta_{30}}
      {\eta_3\eta_{5}\eta_6\eta_{10}}}$
       \\\hline
      $31$ &
      $\displaystyle{X_{31}=\left(\frac{2\eta_1\eta_{31}}{\sigma_{31A}-\sigma_{31B}}\right)^3},$
      & $\displaystyle{\sigma_{31A}=\sum_{j,k}q^{j^2+jk+8k^2},\;
      \sigma_{31B}=\sum_{j,k}q^{2j^2+jk+4k^2}}$ 
       \\\hline
      $32$ &
      $\displaystyle{X_{32}=\frac{\eta_1^2\eta_4\eta_8\eta_{32}^2}
      {\eta_2^3\eta_{16}^3}}$
      & 
       \\\hline
      $33$ &
      $\displaystyle{X_{33}=\frac{w}{1+w+3w^2}},$
      &$\displaystyle{w= \frac{\eta_3\eta_{33}}{\eta_1\eta_{11}}}$
       \\\hline
      $34$ &
      $\displaystyle{X_{34}=\left(\frac{2\eta_1\eta_{17}}{\sigma_{34}}\right)^2},$
      &$\displaystyle{\sigma_{34}=
      \sum_{j\,\text{odd}}\sum_k \left\{q^{\frac14(j^2+2j k+18k^2)}-q^{\frac14(9j^2+2j k+2k^2)}\right\}}$
       \\\hline  
      $35$ &
      $\displaystyle{X_{35}=\frac{w}{1+w-w^2}},$
      &$\displaystyle{w= \frac{\eta_1\eta_{35}}{\eta_5\eta_7}}$
       \\\hline
      $36$ &
      $\displaystyle{X_{36}=\frac{w}{1+3w}},$
      &$\displaystyle{w= \frac{\eta_2^4\eta_3^6\eta_{12}^6\eta_{18}^4}
      {\eta_1\eta_{4}\eta_6^{16}\eta_9\eta_{36}}}$
       \\\hline
      $38$ &
      $\displaystyle{\frac{1}{X_{38}^2}+\frac{1}{X_{38}}=w,}$
      &$\displaystyle{w= \frac{1}{X_{19}(q^2)}+\frac{1}{X_{19}(q)} -2
      }$ 
       \\\hline
      $39$ &
      $\displaystyle{X_{39}=\frac{w}{(1+w)^2}},$
      &$\displaystyle{w= \frac{\eta_1\eta_{39}}
      {\eta_3\eta_{13}}}$
       \\\hline
      $41$ &
      $\displaystyle{\frac{1}{X_{41}}=\frac{\sigma_{41}}{2\eta_1\eta_{41}}-1},$
      &$\displaystyle{\sigma_{41}=
      \sum_{j+k\,\text{odd}}q^{\frac14(3j^2+4jk+15k^2)}(-1)^k}$
       \\\hline  
      $42$ &
      $\displaystyle{X_{42}=\frac{w}{(1-w)^2}},$
      &$\displaystyle{w= \frac{\eta_1\eta_3\eta_{14}\eta_{42}}
      {\eta_2\eta_{6}\eta_7\eta_{21}}}$
       \\\hline
      $44$ &
      $\displaystyle{X_{44}=\frac{\eta_1^2\eta_4^2\eta_{11}^2\eta_{44}^2}
      {\eta_2^4\eta_{22}^4}}$
      & 
       \\\hline
      $45$ &
      $\displaystyle{X_{45}=\frac{\eta_1\eta_5\eta_{9}\eta_{45}}{\eta_3^2\eta_{15}^2}}$
      & 
       \\\hline 
      $46$ &
      $\displaystyle{X_{46}=\frac{w}{1+w+2w^2},}$
      &$\displaystyle{w= \frac{\eta_2\eta_{46}}{\eta_1\eta_{23}}
      }$ 
       \\\hline
      $47$ &
      $\displaystyle{X_{47}=\frac{2\eta_1\eta_{47}}{\sigma_{47A}-\sigma_{47B}}},$
      & $\displaystyle{\sigma_{47A}=\sum_{j,k}q^{j^2+jk+12k^2},\;
      \sigma_{47B}=\sum_{j,k}q^{2j^2+jk+6k^2}}$
       \\\hline
      $49$ &
      $\displaystyle{\frac{1}{X_{49}^3}-\frac{4}{X_{49}^2}+\frac{3}{X_{49}}=\frac{1}{w}-1,}$
      &$\displaystyle{w= \frac{\eta_{1}^2\eta_{49}^2}{\eta_7^4} 
      }$ 
       \\\hline
      $50$ &
      $\displaystyle{\frac{1}{X_{50}^2}+\frac{3}{X_{50}}=w,}$
      &$\displaystyle{w= \frac{1}{X_{25}(q^2)}+\frac{1}{X_{25}(q)} 
      }$ 
       \\\hline 
      $51$ &
      $\displaystyle{\frac{1}{X_{51}^3}-\frac{6}{X_{51}^2}+\frac{10}{X_{51}}=w,}$
      &$\displaystyle{w= \frac{1}{X_{17}(q^3)}+\frac{1}{X_{17}(q)}+6 
      }$ 
       \\\hline 
      $54$ &
      $\displaystyle{\frac{1}{X_{54}^2}-\frac{1}{X_{54}}=\frac{1}{w}-1,}$
      &$\displaystyle{w= \frac{\eta_1\eta_{2}\eta_{27}\eta_{54}}{\eta_3\eta_6\eta_{9}\eta_{18}}
      }$ 
       \\\hline
      $55$ &
      $\displaystyle{\frac{1}{X_{55}^5}-\frac{5}{X_{55}^4}+\frac{15}{X_{55}^2}
      +\frac{1}{X_{55}}=w,}$
      &$\displaystyle{w= \frac{1}{X_{11}(q^5)}+\frac{1}{X_{11}(q)} 
      }$ 
       \\\hline
      $56$ &
      $\displaystyle{X_{56}=\frac{\eta_1\eta_7\eta_{8}\eta_{56}}{\eta_2\eta_{4}\eta_{14}\eta_{28}}}$
      & 
       \\\hline
      $59$ &
      $\displaystyle{X_{59}=\frac{\sigma_{59A}-\sigma_{59B}}{2\sigma_{59B}}},$
      & $\displaystyle{\sigma_{59A}=\sum_{j,k}q^{j^2+jk+15k^2},\;
      \sigma_{59B}=\sum_{j,k}q^{3j^2+jk+5k^2}}$ 
       \\\hline
      $60$ &
      $\displaystyle{X_{60}=\frac{\eta_1\eta_3\eta_4\eta_5\eta_{12}\eta_{15}\eta_{20}\eta_{60}}
      {\eta_2^2\eta_{6}^2\eta_{10}^2\eta_{30}^2}}$
      & 
       \\\hline
      $62$ &
      $\displaystyle{\frac{1}{X_{62}^2}+\frac{1}{X_{62}}=w,}$
      &$\displaystyle{w= \frac{1}{X_{31}(q^2)}+\frac{1}{X_{31}(q)} +2
      }$ 
       \\\hline
      $66$ &
      $\displaystyle{\frac{1}{X_{66}^2}+\frac{1}{X_{66}}=w,}$
      &$\displaystyle{w= \frac{1}{X_{33}(q^2)}+\frac{1}{X_{33}(q)} +4
      }$ 
       \\\hline
      $69$ &
      $\displaystyle{\frac{1}{X_{69}^3}-\frac{2}{X_{69}}=w,}$
      &$\displaystyle{w= \frac{1}{X_{23}(q^3)}+\frac{1}{X_{23}(q)} -1
      }$ 
       \\\hline
      $70$ &
      $\displaystyle{X_{70}=\frac{w}{(1+w)^2}},$
      & $\displaystyle{w=\frac{\eta_2\eta_5\eta_{7}\eta_{70}}{\eta_1\eta_{10}\eta_{14}\eta_{35}}}$
       \\\hline
      $71$ &
      $\displaystyle{X_{71}=\frac{2\eta_1\eta_{71}}{\sigma_{71A}-\sigma_{71B}}},$
      & $\displaystyle{\sigma_{71A}=\sum_{j,k}q^{2j^2+jk+9k^2},\;
      \sigma_{71B}=\sum_{j,k}q^{3j^2+jk+6k^2}}$
       \\\hline
      $78$ &
      $\displaystyle{X_{78}=\frac{w}{(1+w)^2}},$
      & $\displaystyle{w=\frac{\eta_2\eta_3\eta_{13}\eta_{78}}{\eta_1\eta_6\eta_{26}\eta_{39}}}$
       \\\hline
      $87$ &
      $\displaystyle{\frac{1}{X_{87}^3}+\frac{1}{X_{87}}=w,}$
      &$\displaystyle{w= \frac{1}{X_{29}(q^3)}+\frac{1}{X_{29}(q)} -1
      }$ 
       \\\hline
      $92$ &
      $\displaystyle{X_{92}=\frac{\eta_1\eta_4\eta_{23}\eta_{92}}{\eta_2^2\eta_{46}^2}}$
      & 
       \\\hline
      $94$ &
      $\displaystyle{\frac{1}{X_{94}^2}+\frac{1}{X_{94}}=w,}$
      &$\displaystyle{w= \frac{1}{X_{47}(q^2)}+\frac{1}{X_{47}(q)} +2
      }$ 
       \\\hline
      $95$ &
      $\displaystyle{\frac{1}{X_{95}^5}+\frac{5}{X_{95}^4}+\frac{5}{X_{95}^3}
      -\frac{5}{X_{95}^2}-\frac{9}{X_{95}}=w,}$
      &$\displaystyle{w= \frac{1}{X_{19}(q^5)}+\frac{1}{X_{19}(q)} +2
      }$ 
       \\\hline
      $105$ &
      $\displaystyle{\frac{1}{X_{105}^3}+\frac{3}{X_{105}^2}+\frac{1}{X_{105}}=w,}$
      &$\displaystyle{w= \frac{1}{X_{35}(q^3)}+\frac{1}{X_{35}(q)}
      }$ 
       \\
      \hline 
      $110$ &
      $\displaystyle{\frac{1}{X_{110}^2}+\frac{3}{X_{110}}=w,}$
      &$\displaystyle{w= \frac{1}{X_{55}(q^2)}+\frac{1}{X_{55}(q)} -4
      }$ 
       \\\hline
      $119$ &
      $\displaystyle{\frac{1}{X_{119}^7}-\frac{7}{X_{119}^3}-\frac{7}{X_{119}^2}-\frac{6}{X_{119}}=w,}$
      &$\displaystyle{w= \frac{1}{X_{17}(q^7)}+\frac{1}{X_{17}(q)} +3
      }$ 
       \\
      \hline
  \end{longtable}

  \setlength{\tabcolsep}{9pt} 
\renewcommand{\arraystretch}{2.55} 

\begin{longtable}{ | c | l |}
  \caption{The associated weight 2 modular forms~$Z$ for Theorem~\ref{T:1}\\}
   \label{T:Z}\\
       \hline
      $\ell$ & $Z=Z_\ell$ \\
      \hline
      $1$ &
      $\displaystyle{Z_1 
      = \frac{\eta_1^4}{X_1^{1/6}}
      = Q^{1/2}(q) 
      }$ \\
      \hline
      $2$ &
      $\displaystyle{Z_2 
      = \frac{\eta_1^2\eta_2^2}{X_2^{1/4}}
      = 2P_2-P_1 
      }$  \\
      \hline
      $3$ &
      $\displaystyle{Z_3 
      = \frac{\eta_1^2\eta_3^2}{X_3^{1/3}}
      = \frac{3P_3-P_1}{2}
      }$ \\
      \hline
      $4$ &
      $\displaystyle{Z_4 
      = \frac{\eta_1^2\eta_4^2}{X_4^{5/12}}
      = \frac{4P_4-P_1}{3} 
      }$  \\
      \hline
      $5$ &
      $\displaystyle{Z_5 
      = \frac{\eta_1^2\eta_5^2}{X_5^{1/2}}
      = \frac{5P_5-P_1}{4} 
      }$ \\
      \hline
      $6$ &
      $\displaystyle{Z_6 
      = \frac{\eta_1\eta_2\eta_3\eta_6}{X_6^{1/2}}
      = \frac{6P_6+3P_3-2P_2-P_1}{6} 
      }$  \\
      \hline
      $7$ &
      $\displaystyle{Z_7 
      = \frac{\eta_1^2\eta_7^2}{X_7^{2/3}}
      = \frac{7P_7-P_1}{6} 
      }$ \\
      \hline
      $8$ &
      $\displaystyle{Z_8 
      = \frac{\eta_2^2\eta_4^2}{X_8^{1/2}}
      = \frac{8P_8-2P_4+P_2-P_1}{6} 
      }$  \\
      \hline
      $9$ &
      $\displaystyle{Z_9 
      = \frac{\eta_3^4}{X_9^{1/2}}
      = \frac{9P_9-P_1}{8} 
      }$  \\
      \hline
      $10$ &
      $\displaystyle{Z_{10} 
      = \frac{\eta_1\eta_2\eta_5\eta_{10}}{X_{10}^{3/4}}
      = \frac{10P_{10}+5P_5-2P_2-P_1}{12} 
      }$  \\
      \hline
      $11$ &
      $\displaystyle{Z_{11} 
      = \frac{\eta_1^2\eta_{11}^2}{X_{11}}
      = \frac{11P_{11}-P_1}{2(5-8X_{11})} 
      }$ \\
      \hline
      $12$ &
      $\displaystyle{Z_{12} 
      = \frac{\eta_1\eta_3\eta_4\eta_{12}}{X_{12}^{5/6}}
      = \frac{12P_{12}-6P_6-4P_4+3P_3+2P_2-P_1}{6} 
      }$ \\
      \hline
      $13$ &
      $\displaystyle{Z_{13} 
      = \frac{\eta_1^2\eta_{13}^2}{X_{13}^{7/6}}
      = \frac{13P_{13}-P_1}{12(1+X_{13})^{1/2}}}$ \\
      \hline
      $14$ &
      $\displaystyle{Z_{14} 
      = \frac{\eta_1\eta_2\eta_7\eta_{14}}{X_{14}}
      =\frac{14P_{14}+7P_7-2P_2-P_1}{6(3-11X_{14})}}$ \\
      \hline
      $15$ &
      $\displaystyle{Z_{15} 
      = \frac{\eta_1\eta_3\eta_5\eta_{15}}{X_{15}} 
      =\frac{15P_{15}+5P_5-3P_3-P_1}{8(2-3X_{15})}}$ \\
      \hline
      $16$ &
      $\displaystyle{Z_{16} 
      = \frac{\eta_2^{10}\eta_8^{10}}{\eta_1^4\eta_{4}^8\eta_{16}^4}
      = \frac{4P_8-P_2}{3(1-2X_{16})^2}}$ 
      \\
      \hline
      $17$ &
      $\displaystyle{Z_{17} 
      = \frac{\eta_1^2\eta_{17}^2}{X_{17}^{3/2}}
      = \frac{17P_{17}-P_1}{8(2+X_{17})}
      }$ \\
      \hline
      $18$ &
      $\displaystyle{Z_{18} 
      = \left(\frac{\eta_3^8\eta_{6}^8}{X_{18}^{3}(1+X_{18})^{3}}\right)^{1/4}
      = \frac{18P_{18}-9P_9-12P_6+6P_3+2P_2-P_1}{4(1+4X_{18})^{3/2}}}$
       \\
      \hline
      $19$ &
      $\displaystyle{Z_{19} 
      =\frac{\eta_1^2\eta_{19}^2}{X_{19}^{5/3}}
      = \frac{19P_{19}-P_1}{6(3-5X_{19})}}$  \\
      \hline
      $20$ &
      $\displaystyle{Z_{20} 
      = \frac{\eta_2^2\eta_{10}^2}{X_{20}}
      = \frac{20P_{20}-5P_5+4P_4-P_1}{6(3-8X_{20})}}$
       \\
      \hline
      $21$ &
      $\displaystyle{Z_{21} 
      = \frac{\eta_1\eta_3\eta_7\eta_{21}}{X_{21}^{4/3}}
      = \frac{21P_{21}-7P_7+3P_3-P_1}{8(2+5X_{21})}}$ \\
      \hline
      $22$ &
      $\displaystyle{Z_{22} 
      = \frac{\eta_1\eta_2\eta_{11}\eta_{22}}{X_{22}^{3/2}}
      = \frac{22P_{22}+11P_{11}-2P_2-P_1}{6(5-6X_{22})}}$ \\
      \hline
      $23$ &
      $\displaystyle{Z_{23} 
      = \frac{\eta_1^2\eta_{23}^2}{X_{23}^2}
      = \frac{23P_{23}-P_1}{2(11-10X_{23}+3X_{23}^2)}}$ \\
      \hline
      $24$ &
      $\displaystyle{Z_{24} 
      = \frac{\eta_2\eta_4\eta_6\eta_{12}}{X_{24}}
      = \frac{24P_{24}-6P_{12}+8P_8+3P_6-2P_4-3P_3+P_2-P_1}{24(1-X_{24})}}$  \\
      \hline
      $25$ &
       $\displaystyle{Z_{25} 
      = \left(\frac{\eta_1^{12}\eta_{25}^{12}}{X_{25}^{13}(1+X_{25}-X_{25}^2)}\right)^{1/6}
      = \frac{25P_{25}-P_1}{24(1+X_{25}-X_{25}^2)}
      }$  \\
      \hline
      $26$ &
       $\displaystyle{Z_{26} 
      = \frac{\eta_1\eta_2\eta_{13}\eta_{26}}{X_{26}^{7/4}}
      = \frac{26P_{26}+13P_{13}-2P_{2}-P_1}{12(3+5X_{26})}
      }$  \\
      \hline
      $27$ &
      $\displaystyle{Z_{27} 
      = \left(\frac{\eta_1^6\eta_{27}^6}{X_{27}^{7}(1-3X_{27}+3X_{27}^2)}\right)^{1/3}
      = \frac{27P_{27}-9P_9+3P_3-P_1}{4(5-24X_{27}+45X_{27}^2-36X_{27}^3)}}$
      \\
      \hline
      $28$ &
       $\displaystyle{Z_{28} 
      = \frac{\eta_1\eta_{4}\eta_{7}\eta_{28}}{X_{28}^{5/3}}
      = \frac{28P_{28}-14P_{14}+7P_7-4P_4+2P_{2}-P_1}{6(3-8X_{28})}
      }$  \\
      \hline
      $29$ &
      $\displaystyle{Z_{29} 
      = \frac{\eta_1^2\eta_{29}^2}{X_{29}^{5/2}}
      = \frac{29P_{29}-P_1}{4(7-15X_{29}+5X_{29}^2)}
      }$  \\
      \hline 
      $30$ &
      $\displaystyle{Z_{30} 
      = \left(\frac{\eta_1\eta_2\eta_3\eta_5\eta_6\eta_{10}\eta_{15}\eta_{30}}{X_{30}^3}\right)^{1/2}
      = \frac{30P_{30}+15P_{15}-10P_{10}+6P_6-5P_5+3P_3-2P_2-P_1}{12(3+8X_{30})}
      }$  \\
      \hline
      $31$ &
      $\displaystyle{Z_{31} 
      = \frac{\eta_1^2\eta_{31}^2}{X_{31}^{8/3}}
      = \frac{31P_{31}-P_1}{6(5+14X_{31}+5X_{31}^2)}
      }$  \\
      \hline
      $32$ &
      $\displaystyle{Z_{32} 
      = \frac{\eta_4^2\eta_8^2}{X_{32}(1-2X_{32}+2X_{32}^2)}}
      = \frac{32P_{32}-8P_{16}+P_2-P_1}{24(1-X_{32})(1-2X_{32}+2X_{32}^2)}
      $  \\
      \hline
      $33$ &
      $\displaystyle{Z_{33} 
      = \frac{\eta_1\eta_3\eta_{11}\eta_{33}}{X_{33}^2}
      = \frac{33P_{33}+11P_{11}-3P_{3}-P_1}{8(5+8X_{33}+2X_{33}^2)}
      }$  \\
      \hline
      $34$ &
       $\displaystyle{Z_{34} 
      = \frac{\eta_1\eta_2\eta_{17}\eta_{34}}{X_{34}^{9/4}}
      = \frac{34P_{34}+17P_{17}-2P_2-P_1}{24(2-5X_{34})(1-X_{34})^{1/2}}
      }$  \\
      \hline 
      $35$ &
      $\displaystyle{Z_{35} 
      = \frac{\eta_1\eta_5\eta_{7}\eta_{35}}{X_{35}^2}
      = \frac{35P_{35}-7P_{7}+5P_{5}-P_1}{8(4-9X_{35}+14X_{35}^2)}
      }$  \\
      \hline
      $36$ &
      $\displaystyle{Z_{36} 
      = \frac{\eta_6^4}{X_{36}(1+X_{36})^{1/2}(1-3X_{36})^{1/2}}
      = \frac{36P_{36}+9P_{9}-4P_{4}-P_1}{8(1-3X_{36})(5+3X_{36})}
      }$  \\
      \hline
      $38$ &
       $\displaystyle{Z_{38} 
      = \frac{\eta_1\eta_2\eta_{19}\eta_{38}}{X_{38}^{5/2}}
      = \frac{38P_{38}+19P_{19}-2P_2-P_1}{6(9+13X_{38}+6X_{38}^2)}
      }$  \\
      \hline 
      $39$ &
      $\displaystyle{Z_{39} 
      = \frac{\eta_1\eta_3\eta_{13}\eta_{39}}{X_{39}^{7/3}}
      = \frac{39P_{39}-13P_{13}+3P_{3}-P_1}{4(7-36X_{39}+35X_{39}^2)}
      }$  \\
      \hline
      $41$ &
       $\displaystyle{Z_{41} 
      = \frac{\eta_1^2\eta_{41}^2}{X_{41}^{7/2}}
      = \frac{41P_{41}-P_1}{8(5+13X_{41}+5X_{41}^2-5X_{41}^3)}
      }$  \\
      \hline 
      $42$ &
      $\displaystyle{Z_{42} 
      = \left(\frac{\eta_1\eta_2\eta_{3}\eta_{6}\eta_7\eta_{14}\eta_{21}\eta_{42}}{X_{42}^{4}}\right)^{1/2}
      = \frac{42P_{42}-21P_{21}-14P_{14}+7P_7-6P_6+3P_3+2P_2-P_1}
      {12(1+4X_{42})^{1/2}(1+5X_{42}+8X_{42}^2)^{1/2}}
      }$  \\
      \hline
      $44$ &
      $\displaystyle{Z_{44} 
      = \frac{\eta_1\eta_4\eta_{11}\eta_{44}}{X_{44}^{5/2}}
      = \frac{44P_{44}-22P_{22}+11P_{11}-4P_4+2P_2-P_1}{6(5-16X_{44}+16X_{44}^2)}
      }$  \\
      \hline
      $45$ &
      $\displaystyle{Z_{45} 
      = \frac{\eta_1\eta_5\eta_{9}\eta_{45}}{X_{45}^{5/2}}
      = \frac{45P_{45}+9P_{9}-5P_5-P_1}{24(2-3X_{45})(1+X_{45}-X_{45}^2)^{1/2}}
      }$  \\
      \hline
      $46$ &
      $\displaystyle{Z_{46} 
      = \frac{\eta_1\eta_2\eta_{23}\eta_{46}}{X_{46}^{3}}
      = \frac{46P_{46}+23P_{23}-2P_2-P_1}{6(11+15X_{46}-9X_{46}^2-X_{46}^3)}
      }$  \\
      \hline
      $47$ &
      $\displaystyle{Z_{47} 
      = \frac{\eta_1^2\eta_{47}^2}{X_{47}^{4}}
      = \frac{47P_{47}-P_1}{2(23+58X_{47}+83X_{47}^2+52X_{47}^3+12X_{47}^4)}
      }$  \\
      \hline
      $49$ &
       $\displaystyle{Z_{49} 
      = \left(\frac{\eta_1^{12}\eta_{49}^{12}}
      {X_{49}^{25}(1-4X_{49}+3X_{49}^2+X_{49}^3)}\right)^{1/6}
      = \frac{49P_{49}-P_1}{24(2-5X_{49})(1-4X_{49}+3X_{49}^2+X_{49}^3)}
      }$  \\
      \hline 
      $50$ &
       $\displaystyle{Z_{50} 
      = \left(\frac{\eta_1^4\eta_2^4\eta_{25}^4\eta_{50}^4}{X_{50}^{13}(1+3X_{50}+X_{50}^2)}\right)^{1/4}
      = \frac{50P_{50}+25P_{25}-2P_2-P_1}{24(3+7X_{50})(1+3X_{50}+X_{50}^2)}
      }$  \\
      \hline
      $51$ &
       $\displaystyle{Z_{51} 
      = \frac{\eta_1\eta_3\eta_{17}\eta_{51}}{X_{51}^{3}}
      = \frac{51P_{51}+17P_{17}-3P_3-P_1}{8(8-37X_{51}+56X_{51}^2-30X_{51}^3)}
      }$  \\
      \hline 
      $54$ &
       $\displaystyle{Z_{54} 
      = \left(\frac{\eta_1^2\eta_2^2\eta_{27}^2\eta_{54}^2}
      {X_{54}^{7}(1-X_{54}+X_{54}^2)}\right)^{1/2}
      = \frac{54P_{54}+27P_{27}-2P_2-P_1}{6(13-35X_{54}+51X_{54}^2-35X_{54}^3+10X_{54}^4)}
      }$  \\
      \hline 
      $55$ &
       $\displaystyle{Z_{55} 
      = \frac{\eta_1\eta_5\eta_{11}\eta_{55}}{X_{55}^{3}}
      = \frac{55P_{55}+11P_{11}-5P_5-P_1}{12(5-8X_{55}-11X_{55}^2+4X_{55}^3)}
      }$  \\
      \hline 
      $56$ &
      $\displaystyle{Z_{56} 
      = \frac{\eta_1\eta_7\eta_{8}\eta_{56}}{X_{56}^{3}}
      = \frac{56P_{56}-14P_{28}-7P_{14}-8P_{8}+7P_{7}+2P_{4}+P_2-P_1}
      {12(3-4X_{56}+2X_{56}^2-4X_{56}^3)}
      }$  \\
      \hline
      $59$ &
       $\displaystyle{Z_{59} 
      = \frac{\eta_1^2\eta_{59}^2}{X_{59}^{5}}
      = \frac{59P_{59}-P_1}{58+140X_{59}+120X_{59}^2+90X_{59}^3+40X_{59}^4+8X_{59}^5}
      }$  \\
      \hline
      $60$ &
      \shortstack[l]{$\displaystyle{Z_{60} 
      = \frac{\eta_2\eta_6\eta_{10}\eta_{30}}{X_{60}^{2}}}$\\
     $\displaystyle{ = \frac{60P_{60}-30P_{30}+20P_{20}+15P_{15}-12P_{12}-10P_{10}+6P_6+5P_5-4P_4-3P_3+2P_2-P_1}
      {24(2-3X_{60}+4X_{60}^2)}
      }$ }  \\
      \hline
      $62$ &
       $\displaystyle{Z_{62} 
      = \frac{\eta_1\eta_2\eta_{31}\eta_{62}}{X_{62}^{4}}
      = \frac{62P_{62}+31P_{31}-2P_2-P_1}{6(15+19X_{62}+9X_{62}^2-9X_{62}^3-6X_{62}^4)}
      }$  \\
      \hline
      $66$ &
      $\displaystyle{Z_{66} 
      = \left(\frac{\eta_1\eta_2\eta_3\eta_6\eta_{11}\eta_{22}\eta_{33}\eta_{66}}
      {X_{66}^{6}}\right)^{1/2}
      = \frac{66P_{66}+33P_{33}+22P_{22}+11P_{11}-6P_6-3P_3-2P_2-P_1}
      {24(1-X_{66})^{1/2}(5+6X_{66}-14X_{66}^2)}}$  \\
      \hline
      $69$ &
       $\displaystyle{Z_{69} 
      = \frac{\eta_1\eta_3\eta_{23}\eta_{69}}{X_{69}^{4}}
      = \frac{69P_{69}+23P_{23}-3P_3-P_1}{8(11+14X_{69}-10X_{69}^2-6X_{69}^3+6X_{69}^4)}
      }$  \\
          \hline
      $70$ &
      $\displaystyle{Z_{70} 
      = \left(\frac{\eta_1\eta_2\eta_{5}\eta_{7}\eta_{10}\eta_{14}\eta_{35}\eta_{70}}{X_{70}^{6}}\right)^{1/2}
      =\frac{70P_{70}-35P_{35}+14P_{14}-10P_{10}-7P_7+5P_5-2P_2+P_1}
      {12(3+X_{70})(1-4X_{70})^{1/2}(1-3X_{70}+4X_{70}^2)^{1/2}}}$  \\
      \hline
      $71$ &
      $\displaystyle{Z_{71} 
      = \frac{\eta_1^2\eta_{71}^2}{X_{71}^{6}}
      = \frac{71P_{71}-P_1}{(70+164X_{71}+50X_{71}^2-180X_{71}^3-130X_{71}^4+40X_{71}^5+40X_{71}^6)}
      }$  \\
    \hline
      $78$ &
      $\displaystyle{Z_{78} 
      = \left(\frac{\eta_1\eta_2\eta_{3}\eta_{6}\eta_{13}\eta_{26}\eta_{39}\eta_{78}}{X_{78}^{7}}\right)^{1/2}
      =\frac{78P_{78}-39P_{39}+26P_{26}-13P_{13}-6P_6+3P_3-2P_2+P_1}
      {24(2-X_{78})(1-4X_{78})^{1/2}(1-2X_{78}+X_{78}^2-4X_{78}^3)^{1/2}}}$  \\
      \hline
      $87$ &
       $\displaystyle{Z_{87} 
      = \frac{\eta_1\eta_3\eta_{29}\eta_{87}}{X_{87}^{5}}
      = \frac{87P_{87}+29P_{29}-3P_3-P_1}{8(14+17X_{87}+40X_{87}^2+22X_{87}^3+18X_{87}^4+3X_{87}^5)}
      }$  \\
      \hline 
      $92$ &
      $\displaystyle{Z_{92} 
      = \frac{\eta_1\eta_4\eta_{23}\eta_{92}}{X_{92}^{5}}
      = \frac{92P_{92}-46P_{46}+23P_{23}-4P_4+2P_2-P_1}
      {6(11-40X_{92}+80X_{92}^2-104X_{92}^3+80X_{92}^4-32X_{92}^5)}
      }$  \\
      \hline
      $94$ &
       $\displaystyle{Z_{94} 
      = \frac{\eta_1\eta_2\eta_{47}\eta_{94}}{X_{94}^{6}}
      = \frac{94P_{94}+47P_{47}-2P_2-P_1}
      {6(23+27X_{94}-45X_{94}^2+5X_{94}^3+48X_{94}^4-42X_{94}^5+12X_{94}^6)}
      }$  \\
      \hline
      $95$ &
       $\displaystyle{Z_{95} 
      = \frac{\eta_1\eta_5\eta_{19}\eta_{95}}{X_{95}^{5}}
      = \frac{95P_{95}+19P_{19}-5P_5-P_1}{12(9+56X_{95}+115X_{95}^2+100X_{95}^3+41X_{95}^4+4X_{95}^5)}
      }$  \\
      \hline
      $105$ &
      \shortstack[l]{$\displaystyle{Z_{105} 
      = \left(\frac{\eta_1\eta_3\eta_5\eta_7\eta_{15}\eta_{21}\eta_{35}\eta_{105}}{X_{105}^{8}}\right)^{1/2}}$\\
  $\displaystyle{= \frac{105P_{105}-35P_{35}+21P_{21}+15P_{15}-7P_7-5P_5+3P_3-P_1}
      {24(4+9X_{105})(1+X_{105}-X_{105}^2)^{1/2}(1+4X_{105}+4X_{105}^2-4X_{105}^3)^{1/2}}
      }$}  \\
      \hline
      $110$ &
       \shortstack[l]{$\displaystyle{Z_{110} 
      = \left(\frac{\eta_1\eta_2\eta_5\eta_{10}\eta_{11}\eta_{22}\eta_{55}\eta_{110}}{X_{110}^{9}}\right)^{1/2}}$\\
    $\displaystyle{ = \frac{110P_{110}+55P_{55}+22P_{22}+11P_{11}-10P_{10}-5P_5-2P_2-P_1}
      {12(15+77X_{110}+183X_{110}^2+187X_{110}^3+58X_{110}^4)}
      }$ } \\
      \hline
      $119$ &
       $\displaystyle{Z_{119} 
      = \frac{\eta_1\eta_7\eta_{17}\eta_{119}}{X_{119}^{6}}
      = \frac{119P_{119}+17P_{17}-7P_7-P_1}{8(16+19X_{119}+44X_{119}^2+75X_{119}^3+44X_{119}^4+26X_{119}^5+7X_{119}^6)}
      }$  \\
      \hline
\end{longtable}

  \setlength{\tabcolsep}{4pt} 
\renewcommand{\arraystretch}{1.68}
\begin{longtable}{ | c | l | l |} 
  \caption{Polynomials $G(x)$ and $H(x)$ for Theorem~\ref{T:1}\\}
    \label{T:GH}\\
    \hline
    $\ell$ & $G=G_\ell$ & $H=H_{\ell}$ \\
    \hline
    $1$ & $1-1728x$ & $240x$ \\
    \hline
    $2$ & $1-256x$ & $48x$ \\
    \hline
    $3$ & $1-108x$ & $24x$ \\
    \hline
    $4$ & $1-64x$ & $16x$ \\
    \hline
    $5$ & $1-44x-16x^{2}$ & $12x(1+x)$ \\
    \hline
    $6$ & $(1+4x)(1-32x)$ & $8x(1+12x)$ \\
    \hline
    $7$ & $(1+x)(1-27x)$ & $8x(1+3x)$ \\
    \hline
    $8$ & $1-24x+16x^{2}$ & $8x(1-2x)$ \\
    \hline
    $9$ & $1-18x-27x^{2}$ & $3x(2+9x)$ \\
    \hline
    $10$ & $(1+4x)(1-16x)$ & $4x(1+15x)$ \\
    \hline
    $11$ & $1-20x+56x^{2}-44x^{3}$ & $8x(1-8x+11x^{2})$ \\
    \hline
    $12$ & $(1-4x)(1-16x)$ & $8x(1-8x)$ \\
    \hline
    $13$ & $(1+x)(1-10x-27x^{2})$ & $\frac{x}{4}(12+175x+231x^{2})$ \\
    \hline
    $14$ & $(1-4x)(1-18x+49x^{2})$ & $x(10-141x+392x^{2})$ \\
    \hline
    $15$ & $(1-12x)(1-2x+5x^{2})$ & $3x(2-11x+40x^{2})$ \\
    \hline
    $16$ & $(1-2x)^{2}(1-12x+4x^{2})$ & $8x(1-2x)(1-8x+4x^{2})$ \\
    \hline
    $17$ & $1-6x-27x^{2}-28x^{3}-16x^{4}$ & $x(2+35x+68x^{2}+60x^{3})$ \\
    \hline
    $18$ & $(1+x)^{2}(1+4x)(1-8x)$ & $24x^{2}(1+x)(2+5x)$ \\
    \hline
    $19$ & $(1+x)(1-13x+35x^{2}-27x^{3})$ & $x(6-31x-24x^{2}+105x^{3})$ \\
    \hline
    $20$ & $(1-4x)(1-12x+16x^{2})$ & $8x(1-10x+18x^{2})$ \\
    \hline
    $21$ & $(1+4x)(1-2x-27x^{2})$ & $-x(2-47x-240x^{2})$ \\
    \hline
    $22$ & $(1-8x)(1-4x^{2}+4x^{3})$ & $4x(1-3x)(1+4x-10x^{2})$ \\
    \hline
    $23$ & $(1-x^{2}+x^{3})(1-8x+3x^{2}-7x^{3})$ & $4x\big(1-x-x^{2}+12x^{3} -15x^{4}+14x^{5}\big)$ \\
    \hline
    $24$ & $(1+4x^{2})(1-8x+4x^{2})$ & $4x(1-3x+20x^{2}-16x^{3})$ \\
    \hline
    $25$ & $(1+x-x^{2})^2(1-4x-16x^{2})$ & $20x^{2}(1+x-x^{2})(2+2x-7x^{2})$ \\
    \hline
    $26$ & $(1+4x)(1-2x-15x^{2}-16x^{3})$ & $-x(1+2x)(2-35x-126x^{2})$ \\
    \hline
    $27$ & $(1-3x+3x^{2})^2(1-12x+36x^{2}-36x^{3})$ & $12x(1-3x)(1-3x+3x^{2})$ \\
    & & $\times(1-12x+36x^{2}-36x^{3})$ \\
    \hline
    $28$ & $(1-x)(1-8x)(1-5x+8x^{2})$ & $8x(1-4x)(1-7x+8x^{2})$ \\
    \hline
    $29$ & $1-10x+23x^{2}-10x^{3}$ & $x(6-37x+26x^{2}+68x^{3}-132x^{4}+140x^{5})$ \\
    & \quad$-15x^{4}+20x^{5}-16x^{6}$ &  \\
    \hline
    $30$ & $(1+x)(1+5x)(1-4x)(1+4x)$ & $-4x(1-4x-58x^{2}-75x^{3})$ \\
    \hline
    $31$ & $(1+4x+3x^{2}+x^{3})(1-17x^{2}-27x^{3})$ & $-4x(1-5x-69x^{2}-180x^{3}-161x^{4}-60x^{5})$ \\
    \hline
    $32$ & $(1-2x+2x^{2})^2$ & $8x(1-2x+2x^{2})(1-10x+36x^{2}$ \\
    & $\times (1-8x+12x^{2}-16x^{3}+4x^{4})$ & \qquad$-60x^{3}+60x^{4}-16x^{5})$ \\
    \hline
    $33$ & $(1-2x-11x^{2})(1+4x+8x^{2}+4x^{3})$ & $-x(2-15x-152x^{2}-404x^{3}-264x^{4})$ \\
    \hline
    $34$ & $(1-x)(1-x-4x^{2})(1-9x+16x^{2})$ & $\frac{x}{4}(28-197x-15x^{2}+1484x^{3}-1584x^{4})$ \\
    \hline
    $35$ & $(1-2x+5x^{2})(1-8x+16x^{2}-28x^{3})$ & $x(6-61x+296x^{2}-580x^{3}+840x^{4})$ \\
    \hline
    $36$ & $(1+x)^{2}(1-3x)^{2}(1-6x-3x^{2})$ & $3x(1+x)(1-3x)(2-3x-48x^{2}-27x^{3})$ \\
    \hline
    $38$ & $(1+4x+4x^{2}+4x^{3})$ & $-x(2-15x-116x^{2}-316x^{3}$ \\
    & $\times (1-2x-7x^{2}-8x^{3})$ & \qquad$-392x^{4}-280x^{5})$ \\
    \hline
    $39$ & $(1-4x)(1-3x-x^{2})(1-11x+27x^{2})$ & $4x(3-46x+210x^{2}-253x^{3}-168x^{4})$ \\
    \hline
    $41$ & $1+4x-8x^{2}-66x^{3}-120x^{4}$ & $-x(1+2x)(4-20x-174x^{2}-240x^{3}$ \\
    & $-56x^{5}+53x^{6}+36x^{7}-16x^{8}$ & \qquad$+136x^{4}+285x^{5}-126x^{6})$ \\
    \hline
    $42$ & $(1+x)(1-3x)(1+4x)(1+5x+8x^{2})$ & $-\frac{x}{4}(20+49x-567x^{2}-2508x^{3}-2304x^{4})$ \\
    \hline
    $44$ & $(1-4x+8x^{2}-4x^{3})$ & $8x(1-12x+57x^{2}-132x^{3}+160x^{4}-72x^{5})$ \\
    & $\times(1-8x+16x^{2}-16x^{3})$ &  \\
    \hline
    $45$ & $(1+x-x^{2})(1-3x+3x^{2})(1-3x-9x^{2})$ & $\frac{3x}{4}(4+13x-123x^{2}+87x^{3}+396x^{4}-324x^{5})$ \\
    \hline
    $46$ & $(1-2x-7x^{2})(1+2x-3x^{2}+x^{3})$ & $-x(2-23x-104x^{2}-28x^{3}+66x^{4}$ \\
    & $\times(1+2x+x^{2}+x^{3})$ & \qquad$-13x^{5}+144x^{6}-105x^{7})$ \\
    \hline
    $47$ & $(1+4x+7x^{2}+8x^{3}+4x^{4}+x^{5})$ & $-4x\big(1+3x-23x^{2}-177x^{3}-560x^{4}$ \\
    & $\times(1-5x^{2}-20x^{3}-24x^{4}-19x^{5})$ & \qquad$-1087x^{5}-1347x^{6}-1098x^{7}$ \\
    & & \qquad$-500x^{8}-114x^{9}\big)$\\
    \hline
    $49$ & $(1-4x+3x^{2}+x^{3})^2$ & $7x(1-4x+3x^{2}+x^{3})\big(2-27x+108x^{2}$ \\
    & $\times (1-10x+27x^{2}-10x^{3}-27x^{4})$ & \qquad$-105x^{3}-171x^{4}+216x^{5}+96x^{6}\big)$ \\
    \hline
    $50$ & $(1+4x)(1+3x+x^{2})^2$ & $-5x(1+3x+x^{2})\big(2+13x-17x^{2}$ \\
    & $\times (1+2x-7x^{2}-16x^{3})$ & \qquad$-247x^{3}-472x^{4}-204x^{5}\big)$ \\
    \hline
    $51$ & $(1-8x+16x^{2}-12x^{3})$ & $x\big(10-141x+798x^{2}-2488x^{3}$ \\
    & $\times (1-6x+15x^{2}-22x^{3}+17x^{4})$& \qquad$+4656x^{4}-4996x^{5}+2448x^{6}\big)$ \\
    \hline
    $54$ & $(1-x+x^{2})^{2}(1+4x^{3})$ & $3x(1-x+x^{2})\big(2-15x+39x^{2}+x^{3}$ \\
    & $\times (1-6x+9x^{2}-8x^{3})$ & \qquad$-156x^{4}+408x^{5}-424x^{6}+264x^{7}\big)$ \\
    \hline
    $55$ & $(1+x-x^{2})(1-7x+11x^{2})$ & $4x\big(1+2x-34x^{2}-7x^{3}+148x^{4}$ \\
    & $\times (1-4x^{2}-4x^{3})$ & \qquad$+64x^{5}-132x^{6}\big)$ \\
    \hline
    $56$ & $(1-x)(1-2x)(1+x+2x^{2})$ & $4x\big(1-4x+16x^{2}-59x^{3}+64x^{4}$ \\
    & $\times (1-4x-8x^{3}+4x^{4})$ & \qquad$-88x^{5}+152x^{6}-64x^{7}\big)$ \\
    \hline
    $59$ & $(1+2x+x^{3})\big(1-2x-4x^{2}-21x^{3}-44x^{4}$ & $-x\big(4+4x-86x^{2}-448x^{3}-1216x^{4}$ \\
    & $\qquad-60x^{5}-61x^{6}-46x^{7}-24x^{8}-11x^{9}\big)$ & \qquad$-2217x^{5}-3024x^{6}-3128x^{7}-2600x^{8}$ \\
    & & \qquad$-1748x^{9}-720x^{10}-385x^{11}\big)$ \\
    \hline
    $60$ & $(1-x)(1-4x)(1+4x^{2})(1-x+4x^{2})$ & $4x\big(1-8x+40x^{2}-83x^{3}+160x^{4}-144x^{5}\big)$ \\
    \hline
    $62$ & $(1+x^{2}-x^{3})(1+4x+5x^{2}+3x^{3})$ & $-x\big(2-7x-76x^{2}-224x^{3}-214x^{4}+7x^{5}$ \\
    & $\times(1-2x-3x^{2}-4x^{3}+4x^{4})$ & \qquad$+460x^{6}+351x^{7}+80x^{8}-288x^{9}\big)$ \\
    \hline
    $66$ & $(1-x)(1+3x)(1-x-8x^{2})$ & $-\frac{x}{4}\big(4-147x-133x^{2}+1756x^{3}$ \\
    & $\times(1-4x^{2}+4x^{3})$ & \qquad$-264x^{4}-5468x^{5}+4608x^{6}\big)$ \\
    \hline
    $69$ & $(1-x^{2}+x^{3})(1+4x+7x^{2}+5x^{3})$ & $-x\big(2-15x-82x^{2}-68x^{3}+186x^{4}+77x^{5}$ \\
    & $\times(1-2x-5x^{2}+6x^{3}-3x^{4})$ & \qquad$-618x^{6}-51x^{7}+480x^{8}-360x^{9}\big)$ \\
    \hline
    $70$ & $(1-4x)(1+x)(1-3x+4x^{2})$ & $\frac{x}{4}\big(20-97x-71x^{2}+1225x^{3}$ \\
    & $\times (1-x-x^{2}+5x^{3})$ & \qquad$-1729x^{4}-640x^{5}+3840x^{6}\big)$ \\
    \hline
    $71$ & $(1+4x+5x^{2}+x^{3}-3x^{4}-2x^{5}+x^{7})$ & $-4x\big(1-x-38x^{2}-112x^{3}-29x^{4}+362x^{5}$ \\
    & $\times(1-7x^{2}-11x^{3}+5x^{4}$ & \qquad$+563x^{6}-25x^{7}-717x^{8}-475x^{9}+248x^{10}$ \\
    & \qquad$+18x^{5}+4x^{6}-11x^{7})$ & \qquad$+372x^{11}+42x^{12}-132x^{13}\big)$ \\
    \hline
    $78$ & $(1-4x)(1-x+x^{2})(1-x-3x^{2})$ & $x\big(6-35x+56x^{2}-51x^{3}-98x^{4}$ \\
    & $\times(1-2x+x^{2}-4x^{3})$& \qquad$+583x^{5}-708x^{6}+756x^{7}\big)$ \\
    \hline
    $87$ & $(1-2x-x^{2}-x^{3})(1+2x+3x^{2}+3x^{3})$ & $-x\big(2+17x-6x^{2}-124x^{3}-646x^{4}$ \\
    & $\times(1+2x+7x^{2}+6x^{3}$ & \qquad$-1635x^{5}-2862x^{6}-4287x^{7}-4048x^{8}$ \\
    & \qquad $+13x^{4}+4x^{5}+8x^{6})$ & \qquad$-3876x^{9}-1800x^{10}-840x^{11}\big)$ \\
    \hline
    $92$ & $(1-x+2x^{2}-x^{3})(1-4x+4x^{2}-8x^{3})$ & $8x\big(1-13x+81x^{2}-316x^{3}+880x^{4}$ \\
    & $\times(1-5x+14x^{2}-25x^{3}$ & \qquad$-1851x^{5}+2996x^{6}-3772x^{7}+3636x^{8}$ \\
    & $\qquad+28x^{4}-20x^{5}+8x^{6})$ & \qquad$-2560x^{9}+1232x^{10}-288x^{11}\big)$ \\
    \hline
    $94$ & $(1-2x-3x^{2}+4x^{3}-4x^{4})$ & $-x\big(2-23x-52x^{2}+154x^{3}-66x^{4}$ \\
    & $\times(1+4x+3x^{2}-2x^{3}+2x^{4}+5x^{5})$ & \qquad$-745x^{5}+732x^{6}+266x^{7}-2108x^{8}$ \\
    & $\times(1-x^{2}+2x^{3}-2x^{4}+x^{5})$ & \qquad$+1904x^{9}+68x^{10}-2281x^{11}$ \\
    & & \qquad $+2184x^{12}-960x^{13}\big)$ \\
    \hline
    $95$ & $(1+4x+4x^{2}+4x^{3})$ & $-4x\big(3+40x+199x^{2}+403x^{3}-127x^{4}$ \\
    & $\times (1+5x+7x^{2}+5x^{3}+x^{4})$  & \qquad$-2516x^{5}-6309x^{6}-8606x^{7}-7036x^{8}$ \\
    & $\times(1+5x+3x^{2}-15x^{3}-19x^{4})$ & \qquad$-3264x^{9}-570x^{10}\big)$ \\
    \hline
    $105$ & $(1+x-x^{2})(1+x-5x^{2})(1+5x+7x^{2})$ & $-\frac{x}{4}\big(36+329x+593x^{2}-2781x^{3}$ \\
    & $\times(1+4x+4x^{2}-4x^{3})$ & \qquad$-11400x^{4}-7212x^{5}+17268x^{6}$ \\
    & & \qquad$+13384x^{7}-11200x^{8}\big)$ \\
    \hline
    $110$ & $(1+3x+x^{2})(1+3x+5x^{2})$ & $-x\big(10+143x+832x^{2}+2475x^{3}+3146x^{4}$ \\
    & $\times (1+4x+8x^{2}+4x^{3})$ & \qquad$-3007x^{5}-17636x^{6}-27496x^{7}$ \\
    & $\times(1+2x+x^{2}-8x^{3})$ & \qquad$-18000x^{8}-3960x^{9}\big)$ \\
    \hline
    $119$ & $(1+2x+3x^{2}+6x^{3}+5x^{4})$ & $-x\big(2+17x+66x^{2}+26x^{3}-190x^{4}$ \\
    & $\times(1+2x+3x^{2}+6x^{3}+4x^{4}+x^{5})$ & \qquad$-1077x^{5}-3578x^{6}-7492x^{7}-12836x^{8}$ \\
    & $\times(1-2x+3x^{2}-6x^{3}-7x^{5})$ & \qquad$-17746x^{9}-18692x^{10}-15617x^{11}$ \\
    & & \qquad$-7644x^{12}-1680x^{13}\big)$ \\
    \hline
\end{longtable}






\vfill
\pagebreak[4]

\begin{table}[H]
\caption{Positive rational and quadratic irrational series for  level~1.}
\label{T:1.1}
{\renewcommand{\arraystretch}{1.4}
\resizebox{\columnwidth}{!}{%
}}
\end{table}

\begin{table}[H]
\caption{Negative rational and quadratic irrational series for  level~1.}
\label{T:1.2}
{\renewcommand{\arraystretch}{1.8}
\resizebox{\columnwidth}{!}{%
%
}}
\end{table}

\pagebreak[4]

\begin{table}[H]
\caption{Positive rational and quadratic irrational series for level~2.}
\label{T:2.1}
{\renewcommand{\arraystretch}{1.4}
\resizebox{\columnwidth}{!}{%
{%
}}}
\end{table}

\begin{table}[H]
\caption{Negative rational and quadratic irrational series for level~2.}
\label{T:2.2}
{\renewcommand{\arraystretch}{1.8}
\resizebox{\columnwidth}{!}{%
{%
}}
}\end{table}

\begin{table}[H]
\caption{Positive rational and quadratic irrational series for level~3.}
\label{T:3.1}
{\renewcommand{\arraystretch}{1.4}
{%
}}
\end{table}

\begin{table}[H]
\caption{Negative rational and quadratic irrational series for level~3.}
\label{T:3.2}
{\renewcommand{\arraystretch}{1.4}
{%
}}
\end{table}

\begin{table}[H]
\caption{Rational and quadratic irrational series for level~4.
In Ramanujan's notation~\cite[p. 172]{borwein-book},~\cite[(1)]{ramanujan_pi},
$$
X_4(e^{-\pi\sqrt{N}}) = \frac{1}{64\,G_N^{24}} \quad\text{where}\quad
G_N = \frac{1}{(2q)^{1/4}}\left.\prod_{j=1}^\infty (1+q^{2j-1})\right|_{q=e^{-\pi\sqrt{N}}}
$$
and
$$
X_4(-e^{-\pi\sqrt{N}}) = \frac{-1}{64\,g_N^{24}} \quad\text{where}\quad
g_N = \frac{1}{(2q)^{1/4}}\left.\prod_{j=1}^\infty (1-q^{2j-1})\right|_{q=e^{-\pi\sqrt{N}}.}
$$
}
\label{T:4.1}
{\renewcommand{\arraystretch}{1.4}
{
}}
\end{table}

\begin{table}[H]
\caption{Positive rational and quadratic irrational series for level~5.}
\label{T:5.1}
{\renewcommand{\arraystretch}{1.4}
{%
}}
\end{table}

\begin{table}[H]
\caption{Negative rational and quadratic irrational series for level~5.}
\label{T:5.2}
{\renewcommand{\arraystretch}{1.4}
{%
}}
\end{table}


\begin{table}[H]
\caption{Positive rational and quadratic irrational series for level~6.}
\label{T:6.1}
{\renewcommand{\arraystretch}{1.4}
{%
}}
\end{table}

\begin{table}[H]
\caption{Negative rational and quadratic irrational series for level~6.}
\label{T:6.2}
{\renewcommand{\arraystretch}{1.4}
{%
}}
\end{table}


\begin{table}[H]
\caption{Rational and quadratic irrational series for level~7.}
\label{T:7.1}
{\renewcommand{\arraystretch}{1.4}
{%
}}
\end{table}


\begin{table}[H]
\caption{Rational and quadratic irrational series for level~8.}
\label{T:8.1}
{\renewcommand{\arraystretch}{1.4}
{%
}}
\end{table}


\begin{table}[H]
\caption{Rational and quadratic irrational series for level~9.}
\label{T:9.1}
{\renewcommand{\arraystretch}{1.4}
{%
}}
\end{table}


\begin{table}[H]
\caption{Rational and quadratic irrational series for level~10.}
\label{T:10.1}
{\renewcommand{\arraystretch}{1.4}
{%
}}
\end{table}


\begin{table}[H]
\caption{Rational and quadratic irrational series for level~11.}
\label{T:11.1}
{\renewcommand{\arraystretch}{1.4}
{%
}}
\end{table}


\begin{table}[H]
\caption{Rational and quadratic irrational series for level~13.}
\label{T:13.1}
{\renewcommand{\arraystretch}{1.4}
{%
}}
\end{table}


\begin{table}[H]
\caption{Rational and quadratic irrational series for level~18.}
\label{T:18.1}
{\renewcommand{\arraystretch}{1.4}
{%
}}
\end{table}


\begin{table}[H]
\caption{Rational and quadratic irrational series for level~$25$.}
\label{T:25.1}
{\renewcommand{\arraystretch}{1.4}
{%
}}
\end{table}


\begin{table}[H]
\caption{Rational and quadratic irrational series for level~$26$.}
\label{T:26.1}
{\renewcommand{\arraystretch}{1.4}
{%
}}
\end{table}


\begin{table}[H]
\caption{Simplest examples of cubic irrational series for level~$27$; not a complete list.}
\label{T:27.1}
{\renewcommand{\arraystretch}{1.4}
{%
}}
\end{table}


\begin{table}[H]
\caption{Rational and quadratic irrational series for level~$28$.}
\label{T:28.1}
{\renewcommand{\arraystretch}{1.4}
{%
}}
\end{table}


\begin{table}[H]
\caption{Rational and quadratic irrational series for level~$29$.}
\label{T:29.1}
{\renewcommand{\arraystretch}{1.4}
{%
}}
\end{table}


\begin{table}[H]
\caption{Rational and quadratic irrational series for level~$30$.}
\label{T:30.1}
{\renewcommand{\arraystretch}{1.4}
{%
}}
\end{table}


\begin{table}[H]
\caption{Series involving algebraic numbers of degree $\leq 4$ for level~$32$.}
\label{T:32.1}
{\renewcommand{\arraystretch}{1.4}
{%
}}
\end{table}

\begin{table}[H]
\caption{Rational and quadratic irrational series for levels~$38$, 39 and~41.}
\label{T:38.1}
{\renewcommand{\arraystretch}{1.4}
{%
}} 
\end{table}

\begin{table}[H]
\caption{Rational and quadratic irrational series for levels~$42$ and~44.}
\label{T:42.1}
{\renewcommand{\arraystretch}{1.4}
{%
}}
\end{table}

\begin{table}[H]
\caption{Rational and quadratic irrational series for levels~$45$,
46, 47, 49 and 50.}
\label{T:45.1}
{\renewcommand{\arraystretch}{1.4}
{%
}} 
\end{table}

\begin{table}[H]
\caption{Rational and quadratic irrational series for levels~$51$, 55 and~56.
There are no rational or quadratic irrational series for level~54.}
\label{T:51.1}
{\renewcommand{\arraystretch}{1.4}
{%
}} 
\end{table}

\begin{table}[H]
\caption{Rational and quadratic irrational series for levels~$59$, 60 and~62.}
\label{T:59.1}
{\renewcommand{\arraystretch}{1.4}
{%
}} 
\end{table}

\begin{table}[H]
\caption{Rational and quadratic irrational series for levels~$66$ and~69.}
\label{T:66.1}
{\renewcommand{\arraystretch}{1.4}
{%
}} 
\end{table}

\begin{table}[H]
\caption{Rational and quadratic irrational series for levels~$70$, 71 and~78.}
\label{T:70.1}
{\renewcommand{\arraystretch}{1.4}
{%
}} 
\end{table}

\begin{table}[H]
\caption{Rational and quadratic irrational series for levels~87, 92, 94 and 95. }
\label{T:87.1}
{\renewcommand{\arraystretch}{1.4}
{%
}} 
\end{table}
\hfill

\begin{table}[H]
\caption{Rational and quadratic irrational series for levels~105, 110 and~119. }
\label{T:105.1}
{\renewcommand{\arraystretch}{1.4}
{%
}} 
\end{table}

\hfill

\setlength{\tabcolsep}{10pt} 
\renewcommand{\arraystretch}{2.8} 

\begin{table}[H]
   \caption{The coefficients $T(n)$ in terms of binomial coefficients (where known)
   and references to the OEIS~\cite{oeis}.}
   \label{T:00X}
    %
  \end{table}

 \vfill
\pagebreak[4]

\end{document}